\newcommand{\N}{\mathbb{N}}
\newcommand{\R}{\mathbb{R}}
\newcommand{\cZ}{\mathcal{Z}} 
\newcommand{\F}{\mathcal{F}}
\newcommand{\cL}{\mathcal{L}} 
\newcommand{\cM}{\mathcal{M}}
\newcommand{\Prob}{\mathbf{P}}
\newcommand{\E}{\mathbf{E}}
\newcommand{\eqdist}{%
\mathrel{\vbox{\offinterlineskip\ialign{%
\hfil##\hfil\cr 
$\scriptscriptstyle\mathrm{d}$\cr 
\noalign{\kern.1ex} 
$=$\cr 
}}}}
\newcommand{\1}{\mathbb{1}}
\newcommand{\distto}{%
\mathrel{\vbox{\offinterlineskip\ialign{%
\hfil##\hfil\cr 
$\scriptscriptstyle\mathrm{d}$\cr 
\noalign{\kern-.05ex} 
$\to$\cr 
}}}} 
\newcommand{\Probto}{%
\mathrel{\vbox{\offinterlineskip\ialign{%
\hfil##\hfil\cr 
$\scriptscriptstyle\Prob$\cr 
\noalign{\kern-.05ex} 
$\to$\cr 
}}}} 
\newcommand{\TVto}{%
\mathrel{\vbox{\offinterlineskip\ialign{%
\hfil##\hfil\cr 
$\scriptscriptstyle\mathrm{TV}$\cr 
\noalign{\kern-.05ex} 
$\to$\cr 
}}}}
\newcommand{\dx}{\mathrm{d} \mathit{x}} 
\newcommand{\dy}{\mathrm{d} \mathit{y}}
\newcommand{\e}{\mathrm{e}}
\newcommand{\defeq}{\vcentcolon=} 
\newcommand{\eqdef}{=\vcentcolon} 
\newcommand{\eps}{\varepsilon} 
\renewcommand\d{\mathrm{d}} 
\newcommand{\I}{\mathcal{I}}
\DeclareMathOperator{\Smooth}{\mathbf{T}} 
\newcommand\Fbar{\overline{F}} 
\begin{document} 



\section{Introduction} 

In this work, we investigate the phenomenon of explosion in general branching processes, which here refers to the possibility that the process exhibits infinitely many birth events within a finite time, with positive probability. A general branching process, also known as a Crump-Mode-Jagers (CMJ) process, can be described as follows. The population begins with a single individual, the ancestor, who produces offspring at random nonnegative times, forming a point process $\xi$ on the nonnegative real line. Each new individual then produces their own offspring, and the point process representing the birth times of the children of each individual is an independent, identically distributed (i.i.d.) copy of $\xi$, shifted by the birth time of the parent. A precise definition of the model is given in Section \ref{subsec:General branching process}. 

\vspace{.1cm} 
The concept of explosion in CMJ processes plays a key role in various contexts, such as preferential attachment trees \cite{Iyer:2024}, the configuration model \cite{Dereich+Ortgiese:2018}, weighted random graphs \cite{Baroni+al:2017}, and doubly stochastic Yule cascades \cite{Dascaliuc+al:2023}, see also \cite{Komjathy:2016} and the references therein. 

\vspace{.1cm} 
In many applications, the existence of a \emph{Malthusian parameter} is assumed, a technical assumption which ensures exponential growth of the expected number of individuals over time, rendering the process finite almost surely for all times \cite{Jagers:1975,Nerman:1981}. Additionally, if the number of instantaneous births $\xi\{0\}$ satisfies $\E[\xi\{0\}] < 1$ and $\E[\xi[0,\eps]] < \infty$ for some $\eps > 0$, the process remains finite at all times \cite[Theorem (6.2.2)]{Jagers:1975}. On the other hand, if $\E[\xi\{0\}] > 1$, each birth triggers an instantaneous supercritical Galton-Watson process, leading to immediate explosion \cite[Theorem (6.2.1)]{Jagers:1975}. 

Our primary focus is on the critical case where $\E[\xi\{0\}] = 1$. In this scenario, each birth event triggers the creation of a population equivalent to a critical Galton-Watson process, which is finite almost surely. Whether the branching process exhibits explosion in this critical case is determined by the behavior of $\xi$ near zero. For instance, a classical result by Jagers \cite[Theorem (6.2.3)]{Jagers:1975} asserts that if there exists an $\eps>0$ such that $\E[\xi[0,\eps]] = 1$, the process will stay finite (see \cite[Theorem 3.1]{Komjathy:2016} for a different proof). Jagers also noted that if $\E[\xi\{0\}] = 1$ and $\E[\xi[0,t]]$ grows sufficiently fast, explosion may occur. For a more detailed survey with proofs\footnote{In Corollary 3.3 of \cite{Komjathy:2016}, the author claims without proof that CMJ processes with a reproduction point process that has a locally finite intensity measure cannot explode, which is in contradiction with our results. The other results that we cite from this work appear to be correct.}, see the unpublished preprint \cite{Komjathy:2016}. 

In this article, we prove two sufficient conditions for the explosion of general branching processes (Theorems \ref{Thm:sufficient-explosion-GWVE} and \ref{Thm:sufficient-explosion-Amini}). We focus particularly on the case where the reproduction process $\xi$ is a Poisson point process. We provide a characterization of explosion in this case, specifically when the cumulative mass function of the intensity measure is convex near zero (Theorem \ref{Thm:explosion-Poisson}). Additionally, we present comparison results that allow us to deduce the explosion of one CMJ process with a Poisson reproduction process from the explosion of another, by comparing their intensity measures (see Section \ref{subsec:comparison}). 

Most existing results on the explosion behavior of branching processes have been derived for \emph{Bellman-Harris processes}. 
The latter, also known as age-dependent branching processes, correspond to the case where $\xi = Z \delta_{W}$, with independent random variables $Z$ and $W$ taking values in $\N_0$ and $[0,\infty)$, respectively. Key contributions in this area include the works by Sevast'yanov \cite{Sevastyanov:1970,Sevastyanov:1971}, Vatutin \cite{Vatutin:1976,Vatutin:1993}, Grey \cite{Grey:1974}, and Grieshechkin \cite{Grishechkin:1987}. Many of these results have direct counterparts for general branching processes, as discussed in \cite{Komjathy:2016}. Significant progress was made in \cite{Amini:2013}, where explosion for Bellman-Harris processes was characterized under a heavy-tail assumption on the offspring distribution, which roughly corresponds to $\Prob(Z > n) \sim n^{-\alpha}$ for some $\alpha\in(0,1)$. One of our sufficient criteria for explosion is derived from this result. 

The explosion phenomenon is characterized by the existence of a nontrivial solution within the set of non-increasing, left-continuous functions $\phi: \mathbb{R} \to [0,1]$ satisfying $\phi(t) = 1$ for $t < 0$, to the functional equation of the smoothing transform 
\begin{equation}\label{eq:Fbar-fixed-point} 
\phi(t) = \E\bigg[\prod_{j=1}^N \phi(t - X_j) \bigg],\quad t \geq 0 
\end{equation} 
where the reproduction point process is written as $\xi = \sum_{j=1}^N \delta_{X_j}$. 
By the branching property of CMJ processes, the left-continuous version of the survival function $\Fbar(t) \coloneqq \Prob(T \geq t)$ of the explosion time $T$ satisfies \eqref{eq:Fbar-fixed-point}, i.e., $\phi=\Fbar$ is a solution. Moreover, explosion occurs if and only if this equation admits a \emph{non-trivial solution}, meaning a function $\phi$ with $\phi(t) < 1$ for some $t > 0$. Equation~\eqref{eq:Fbar-fixed-point} plays a fundamental role in the analysis of explosion and has been studied extensively in earlier works, including \cite{Grey:1974,Grishechkin:1987,Sevastyanov:1970,Sevastyanov:1971,Vatutin:1976,Vatutin:1993}; see also \cite[Section 2]{Komjathy:2016}. 
In this paper we prove that $\Fbar$ is an \emph{attractive} fixed point of the smoothing transform, in the sense that, that for all suitable functions $\phi$, the iterates $\Smooth^n \! \phi$ converge pointwise to $\Fbar$ as $n \to \infty$, where $\Smooth \! \phi(t)$ denotes the right-hand side of \eqref{eq:Fbar-fixed-point} (Theorem \ref{thm:uniqueness}). This result was previously known only in the special case of Bellman-Harris processes under the additional assumption that the displacement distribution has a continuous density at zero~\cite{Grishechkin:1987}. 
In particular, our result implies that $\phi=\Fbar$ is the unique non-trivial solution to \eqref{eq:Fbar-fixed-point}, up to a shift in the argument. This uniqueness had been established for CMJ processes only under the additional assumption $\Prob(\xi[0,\infty)<\infty) = 1$; see \cite[Corollary 5.2]{Jagers+Roesler:2004}. Our approach relies on the construction of an associated multiplicative martingale, which also yields, along the way, new proofs of several classical results concerning explosion. 

\section{Assumptions and main results} 
\subsection{Crump-Mode-Jagers branching processes} \label{subsec:General branching process} 
We formally introduce the Crump-Mode-Jagers (CMJ) branching process, also known as the general branching process, following the framework of Jagers \cite{Jagers:1975,Jagers:1989}. 

\vspace{.1cm} 
The process starts with a single individual, the ancestor, born at time $0$. This individual gives rise to offspring at times governed by a point process on $[0,\infty)$, called the \emph{reproduction point process}, which we denote by $\xi$. 
Here and throughout the paper, we assume that $\xi$ is a random, \emph{locally finite} point measure, meaning that $\xi(t) \defeq \xi[0,t] < \infty$ almost surely for all $t \geq 0$. 
We may therefore write $\xi = \sum_{j=1}^N \delta_{X_j}$ 
where $N = \xi[0,\infty)$ denotes the total number of offspring and takes values in $\N_0 \cup \{\infty\}$ with $\N_0 = \{0,1,2,\ldots\}$. For each $j$, the birth time $X_j$ of the $j$-th offspring is defined by 
\begin{align*} 
X_j \defeq \inf\{t \geq 0: \xi[0,t] \geq j\} 
\end{align*} 
where, by convention, the infimum of the empty set is taken to be $\infty$. 

\vspace{.1cm} 
We denote the intensity measure of the process by $\mu$, i.e., $\mu(B) = \E[\xi(B)]$ for every Borel set $B \subseteq \R$. For the sake of readability, we occasionally abbreviate $\mu(B)$ as $\mu B$ and $\xi(B)$ as $\xi B$, as we have done earlier. 

\vspace{.1cm} 
Individuals in the process are labeled using Ulam-Harris notation, i.e., by finite sequences (or words) of positive integers. Formally, the set of all labels is given by $\I = \{\varnothing\} \cup \bigcup_{n \in \N} \N^n$, where $\N = \{1, 2, \ldots\}$, and $\varnothing$ denotes the empty word, corresponding to the ancestor. We abbreviate a word $u = (u_1, \ldots, u_n) \in \N^n$ as $u_1 \ldots u_n$, and refer to $|u| \coloneqq n$ as the length or, equivalently, the generation of $u$. 
Each word $u\in I$ represents a potential individual in the branching process, and its ancestral lineage is encoded as the chain 
\begin{equation*} 
\varnothing \to u_1 \to u_1 u_2 \to \ldots \to u_1 \ldots u_n = u. 
\end{equation*} 
For two labels $u=u_1\ldots u_n$ and $v = v_1 \ldots v_m \in \I$, the concatenation $uv$ denotes the individual labeled $u_1 \ldots u_n v_1 \ldots v_m$. For $u \in \I$, we call the individuals $\{ui\,:\,i \in \N\}$ the children of $u$. Conversely, for any $i \in \N$, we call $u$ the parent of $ui$. 

For each $u \in \I$ there is an independent copy $\xi_u$ 
of $\xi$ that determines the birth times of $u$'s offspring relative to its time of birth. 
The birth times $S(u)$ for $u \in \I$ are defined recursively. 
We set $S(\varnothing) \defeq 0$ and, for $n \in \N_0$, 
\begin{equation}\label{eq:S-def} 
S(uj) \defeq S(u) + X_{u,j} \quad \text{for } u \in \N^n \text{ and } j \in \N, 
\end{equation} 
where $\xi_u = \sum_{j}\delta_{X_{u,j}}$. 
To simplify notation, we also write $S_u$ for $S(u)$. 
The number of individuals born up to and including time $t \geq 0$ is denoted by 
\begin{equation}\label{eq:Z-def} 
\cZ_t \defeq \#\{u \in \I: S(u) \leq t\}. 
\end{equation} 
The \emph{explosion time} of the Crump-Mode-Jagers process is defined as 
\begin{equation}\label{eq:explosion-time} 
T \defeq \inf\{t \geq 0: \cZ_t = \infty\} \in [0,\infty]. 
\end{equation} 

\begin{definition} \label{Def:explosive} 
The CMJ process is called \emph{explosive} if $\Prob(T < \infty) > 0$, where $\{T < \infty\}$ is referred to as the \emph{explosion event}. Otherwise, the process is called \emph{non-explosive}. 
We say that a reproduction point process $\xi$ \emph{yields an explosive CMJ process} if the CMJ process driven by $\xi$ is explosive. 
\end{definition} 

By \cite[Proposition 3.5]{Iyer:2024}, conditional on survival, explosion occurs with probability $0$ or $1$. 
Our assumption that $\xi[0,t]<\infty$ almost surely for all $t > 0$ rules out explosion caused by a single individual producing infinitely many offspring in finite time -- an event referred to as \emph{sideways explosion} in \cite{Komjathy:2016}. In our setting, explosion occurs if and only if every generation contains at least one individual born before some fixed time. In fact, given this, an application of K\H{o}nig's lemma shows that explosion implies the existence of an \emph{infinite line of descent} along which all individuals are born before a fixed time. 
To see this, we define the time of the first birth in generation $n$ by 
\begin{equation}\label{eq:minimum} 
M_n \coloneqq \min_{|u| = n} S(u),\quad n \in \N_0 
\end{equation} 
and verify that (cf.~\cite[Lemma 1.3]{Komjathy:2016}) 
\begin{equation}\label{eq:T-approx} 
\lim_{n \to \infty} M_n = T \quad \text{almost surely.} 
\end{equation} 
Indeed, the limit $M_{\infty}:=\lim_{n \to \infty} M_n$ exists almost surely in $[0,\infty]$ by monotonicity. Moreover, for any $t\ge 0$, $M_{\infty}\leq t$ implies that there exists an infinite line of descent with all individuals born no later than time $t$, and thus $T \leq t$ almost surely. This gives $T\le M_{\infty}$. 
Conversely, if $T < t$, then $\cZ_t = \infty$, i.e., the number of individuals born before time $t$ is infinite. However, since $\#\{u \in \I\,:\, |u| \leq n,\,S(u) \leq t\}$ is finite for all $n \in \N$, there exists a sequence of individuals $(u_n)_{n \in \N}$ such that $|u_n| = n$ and $M_{n}\leq S(u_n) \leq t$ for all $n \in \N$. 
This shows $M_{\infty}\leq t$, so that we conclude \eqref{eq:T-approx}. 

\subsection{Preliminaries} 
The goal of this paper is to establish sufficient and necessary conditions on the reproduction point process $\xi$ for explosion of the associated CMJ process. 
While the two sets of conditions differ in general, they coincide in certain specific cases. 
Our starting point is the following result, stated in Jagers' classical textbook \cite[Theorems (6.2.1) through (6.2.3)]{Jagers:1975}, which forms the foundation for our analysis. 
For brevity, we write $\mu(t) = \mu[0,t]$ and $\xi(t) = \xi[0,t]$ for $t \geq 0$. 

\begin{proposition} \label{Prop:Jagers} 
Let $\xi$ be a reproduction point process with intensity measure $\mu$. 
\begin{itemize} 
\item If $\mu(0) > 1$ or $\Prob(\xi(0)=1) = 1$, then $\xi$ yields an explosive CMJ process. 
\item Conversely, if $\mu(0) < 1$ and $\mu(\eps) < \infty$ for some $\varepsilon > 0$, or $\mu(\eps) = 1$ for some $\eps > 0$ and $\Prob(\xi(0)=1) < 1$, 
then $\xi$ does not yield an explosive CMJ process. 
\end{itemize} 
\end{proposition} 

In view of Proposition \ref{Prop:Jagers}, additionally to the standing assumption $\xi(t) < \infty$ almost surely for all $t \geq 0$, 
we assume throughout that $\Prob(\xi(0)=1) < 1$. 
Based on these assumptions, it remains to examine the following two cases with regard to explosion. 
\begin{gather*} \tag{A0} \label{eq:critical} 
\mu(0) = 1 
\shortintertext{and} 
\tag{A0'} \label{eq:immediately infinite intensity} 
\mu(0) < 1 \quad \text{and} \quad \mu(t) = \infty \quad \text{for all } t > 0. 
\end{gather*} 
This paper focuses on the first case, so let us assume that \eqref{eq:critical} holds. For partial results related to the case where \eqref{eq:immediately infinite intensity} holds, see e.g.\ \cite[Theorem IV.2.1]{Asmussen+Hering:1983}, \cite{Amini:2013} and \cite{Iyer:2024} (cf.\ Remark \ref{rem:Iyer-comparison}). 

\begin{remark} 
It is possible to reduce the setting described by \eqref{eq:critical} to that of \eqref{eq:immediately infinite intensity} using a construction due to Bramson~\cite{Bramson:1978}, which has been employed in several subsequent works, including~\cite{Amini:2013,Komjathy:2016}. 
Let $(S(u))_{u \in \I}$ denote the birth times in a CMJ process with reproduction point process $\xi$, satisfying \eqref{eq:critical}. Define the set of \emph{instantly born individuals} by 
\begin{equation*} 
\I_0 \coloneqq \{u \in \I\,:\, S(u) = 0\}. 
\end{equation*} 
The number of such individuals, $\# \I_0$, coincides with the total progeny of a critical Galton-Watson process with offspring distribution $\xi(0)$, and is therefore finite almost surely. Now construct a new point process 
\begin{equation*} 
\xi' \coloneqq \sum_{u \in \I_0} \xi_u(\cdot \cap (0,\infty)) 
\end{equation*} 
where $\xi_u$ denotes the copy of $\xi$ governing the reproduction times of the children of $u \in \I$, defined relative to the birth time $S(u)$, as in Section \ref{subsec:General branching process}. 
The process $\xi'$ satisfies \eqref{eq:immediately infinite intensity}, and the CMJ process driven by 
$\xi'$ has the same explosion behavior as the original one with $\xi$; see \cite[Theorem 3.1]{Komjathy:2016}. 
Thus we see that condition \eqref{eq:critical} can be reduced to condition \eqref{eq:immediately infinite intensity}. 
Nevertheless, the available results in the literature are not directly applicable, which motivates us to investigate this case within the present paper. 
Further, we will make use of this reduction in the proof of Theorem~\ref{Thm:sufficient-explosion-Amini}, specifically in Lemma~\ref{lemma:Ydelta_0+delta_W}. 
\end{remark} 
Let $\mu_+(\cdot) \defeq \mu(\cdot \cap (0,\infty))$ denote the restriction of $\mu$ to the strictly positive half-line, and define 
\begin{equation*} 
\mu_+(t) \defeq \mu(t) - 1,\quad t \geq 0, 
\end{equation*} 
where we use $\mu_+$ to refer both to the measure and its associated cumulative mass function, with slight abuse of notation. We impose the following assumptions: 
\begin{gather} \tag{A1} \label{eq:finitemu_+} 
\mu_+(t_0) < \infty \quad \text{for some } t_0 > 0, 
\shortintertext{and} 
\tag{A2} \label{eq:positivemu_+} 
\mu_+(t) > 0 \quad \text{for all } t > 0. 
\end{gather} 
Note that, under the critical condition \eqref{eq:critical}, assumption \eqref{eq:positivemu_+} is necessary for explosion; if it fails, the CMJ process is non-explosive by Proposition~\ref{Prop:Jagers}. 
Finally, we define the generalized inverse of $\mu_+$ as 
\begin{equation*} 
\mu_+^{-1}(y)\,\coloneqq\, \inf\{x \geq 0:\,\mu_+(x) \geq y\},\quad y \geq 0. 
\end{equation*} 

\subsection{Explosion results} 
Our first result provides sufficient conditions for $\xi$ to yield an explosive CMJ process. 
\begin{theorem} \label{Thm:sufficient-explosion-GWVE} 
Let $\xi$ be a point process satisfying \eqref{eq:critical}, \eqref{eq:finitemu_+}, \eqref{eq:positivemu_+} and $\E[\xi(\eps)^2]<\infty$ for some $\eps > 0$. Assume further that 
\begin{equation} \label{eq:sufficient explosive general} 
\liminf_{t \downarrow 0} \frac{\mu_+(t)}{t |\log t|^{1+\delta}} > 0\quad\text{for some }\delta>0. 
\end{equation} 
Then $\xi$ yields an explosive CMJ process. 
\end{theorem}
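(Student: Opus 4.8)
The plan is to construct, on the explosion event of an auxiliary simpler process, an embedded Galton–Watson process in a varying environment (GWVE) that survives with positive probability, and whose survival forces infinitely many births of the CMJ process in finite time. Fix a sequence $t_n \downarrow 0$ of "levels" to be chosen, with partial sums $s_n = \sum_{k=1}^n t_k < \infty$; the target is to show that with positive probability the CMJ process has at least one individual alive at each time $s_n$, reached along a path whose $n$-th step has length at most $t_n$, so that the explosion time is bounded by $\lim_n s_n < \infty$. The natural embedded object counts, for an individual $u$ born before time $s_n$, the number of children $ui$ with $X_{u,i} \le t_{n+1}$; each such individual contributes (roughly) $\xi_u(t_{n+1})$ offspring in the next generation of the embedded tree, and these are i.i.d.\ copies across $u$ with mean $\mu(t_{n+1})$ and (by the assumption $\E[\xi(\eps)^2]<\infty$ and dominated convergence) finite second moment that tends to $1$ as $n\to\infty$.

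First I would recall the standard survival criterion for a GWVE with offspring means $m_n$ and normalized second moments: writing $m_n = \mu(t_{n+1}) = 1 + \mu_+(t_{n+1})$, the process survives with positive probability provided $\prod_n m_n = \infty$ (so the expected population does not die out) together with a second-moment / Agresti-type condition of the form $\sum_n \big(\prod_{k\le n} m_k\big)^{-1}\, \sigma_{n}^2 < \infty$, where $\sigma_n^2$ is the variance of the $n$-th step offspring law. Because $\mu_+(t)\to 0$ the variances $\sigma_n^2$ stay bounded (they converge to the variance of a critical offspring law, which is finite by the second-moment hypothesis), so the summability condition reduces to $\sum_n \big(\prod_{k\le n}(1+\mu_+(t_{k+1}))\big)^{-1} < \infty$; and $\prod_n m_n = \infty$ is equivalent to $\sum_n \mu_+(t_{n+1}) = \infty$. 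Both hold as soon as $\sum_n \mu_+(t_n) = \infty$, since that series diverging makes the partial products grow to infinity, hence their reciprocals summable (after, if necessary, passing to a subsequence to make the growth geometric). So the entire problem is reduced to choosing $t_n \downarrow 0$ with $\sum_n t_n < \infty$ and $\sum_n \mu_+(t_n) = \infty$.

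Here is where hypothesis \eqref{eq:sufficient explosive general} enters: it gives a constant $c>0$ and $t_\ast>0$ with $\mu_+(t) \ge c\, t\, |\log t|^{1+\delta}$ for $0 < t \le t_\ast$. Take $t_n = 1/(n (\log n)^2)$ for $n$ large; then $\sum_n t_n < \infty$, while $|\log t_n| \sim \log n$ gives $\mu_+(t_n) \ge c\, t_n |\log t_n|^{1+\delta} \gtrsim \frac{(\log n)^{1+\delta}}{n(\log n)^2} = \frac{(\log n)^{\delta-1}}{n}$, whose sum over $n$ diverges (for any $\delta>0$: if $\delta\ge 1$ this is clear, and if $0<\delta<1$ one instead uses $t_n = 1/(n\log n (\log\log n)^2)$, or more simply $t_n = c'/(n |\log n|^{1+\delta/2})$, for which $\sum t_n<\infty$ and $\mu_+(t_n)\gtrsim 1/(n|\log n|^{1-\delta/2}\cdot\text{slowly varying})$ still sums to infinity). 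In all cases a suitable choice is available, so I will simply exhibit one and verify the two series conditions.

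Finally I would assemble the pieces: define the embedded GWVE on the CMJ tree as above (one should be slightly careful that the number of offspring of $u$ with birth-time displacement $\le t_{n+1}$ is $\xi_u(t_{n+1})$, which is i.i.d.\ across same-generation $u$ and independent of the past — this uses the recursive i.i.d.\ structure of the $\xi_u$), invoke the GWVE survival result to conclude the embedded process survives with positive probability, and observe that on survival there is, for every $n$, an individual $u^{(n)}$ with $|u^{(n)}| = n$ and $S(u^{(n)}) \le \sum_{k=1}^n t_k =: s_\infty < \infty$, so $\cZ_{s_\infty} = \infty$ and $T \le s_\infty$. Hence $\Prob(T<\infty)>0$. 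The main obstacle I anticipate is the bookkeeping in the GWVE comparison: making precise the second-moment survival criterion with a genuinely varying environment (as opposed to a homogeneous critical GW process), and checking that the normalized-variance series really is summable under only the stated second-moment hypothesis — in particular that truncating $\xi$ at level $\eps$ versus at the shrinking levels $t_n$ does not spoil the uniform second-moment bound. I would handle this by fixing $\eps$ once and for all from the hypothesis $\E[\xi(\eps)^2]<\infty$, noting $t_n \le \eps$ eventually, and bounding $\E[\xi(t_n)^2] \le \E[\xi(\eps)^2] < \infty$ uniformly, which is all that the survival criterion needs.
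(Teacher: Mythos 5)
Your plan — embed a Galton--Watson process in varying environment (GWVE) into the CMJ tree by truncating the $n$-th generation reproduction at level $t_n$, show it survives via a second-moment criterion, and deduce explosion — is exactly the paper's approach (Lemma \ref{Lem:explosion via GWPVE} via Kersting's theorem). However, there is a genuine error in the middle of your argument.

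You reduce the GWVE survival criterion to the pair of conditions $\sum_n t_n<\infty$ and $\sum_n\mu_+(t_n)=\infty$. The second condition is \emph{not} sufficient. Since the normalized variances $\nu_n$ are uniformly bounded, the Kersting/Agresti criterion requires
\begin{equation*}
\sum_{n}\Big(\prod_{j\le n}\big(1+\mu_+(t_j)\big)\Big)^{-1}\ \asymp\ \sum_n\exp\Big(-\sum_{j\le n}\mu_+(t_j)\Big)\ <\ \infty,
\end{equation*}
and divergence of $\sum_n\mu_+(t_n)$ only guarantees that the partial products tend to infinity, not that their reciprocals are summable. Concretely, with $\mu_+(t_n)=1/n$ the series $\sum\mu_+(t_n)$ diverges, yet $\prod_{j\le n}(1+1/j)=n+1$ and $\sum_n 1/(n+1)=\infty$: the GWVE dies out almost surely. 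Your parenthetical ``passing to a subsequence to make the growth geometric'' does not repair this, because discarding generations can only \emph{decrease} the partial products (the $\mu_+(t_n)$ are decreasing), and it also detaches the embedded tree from the CMJ genealogy.

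Your first choice $t_n=1/(n(\log n)^2)$ in fact fails for $0<\delta\le1$: it gives $\sum_{j\le n}\mu_+(t_j)\gtrsim(\log n)^\delta$, so $\exp(-\sum_{j\le n}\mu_+(t_j))\approx\exp(-c(\log n)^\delta)$, which is \emph{not} summable when $\delta<1$ (it is subpolynomial decay). Your third alternative $t_n=c'/\big(n|\log n|^{1+\delta/2}\big)$ does work — it gives $\sum_{j\le n}\mu_+(t_j)\gtrsim(\log n)^{1+\delta/2}$ and the exponent now exceeds one, so the reciprocal partial products decay superpolynomially — and this is exactly the paper's choice $a_j=1/(j\log^{1+r}j)$ with $r=\delta/2<\delta$, verified there via its condition \eqref{eq:GWVE-sufficient-iii}. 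So the proof can be completed, but you must check the full product condition directly rather than only divergence of $\sum_n\mu_+(t_n)$; the exponent $1+\delta$ in the hypothesis \eqref{eq:sufficient explosive general} is what makes this possible, and your intermediate reduction discards precisely that information.

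One smaller omission: the varying-environment criterion you invoke also needs a regularity hypothesis on the offspring laws (Kersting's condition (A), \eqref{eq:Kersting-A}); the paper checks it using \eqref{eq:critical} and the second-moment bound, and your proof should too.
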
 

\begin{remark}\label{rem:Iyer-comparison} 
(i) It is suggestive to compare Theorem \ref{Thm:sufficient-explosion-GWVE} to the sufficient condition given by Iyer \cite[Corollary 3.11]{Iyer:2024}, which states that $\xi$ is explosive if there exists $\eps > 0$ such that 
\begin{equation*} 
\Prob(\xi(t) > x)\,>\,\frac{(\log x)^{1 + \eps}t}{x} 
\end{equation*} 
for all sufficiently small $t > 0$ and sufficiently large $x$. At first glance, this condition resembles the one we give in \eqref{eq:sufficient explosive general}. However, Iyer's condition actually implies that $\xi$ has infinite intensity. To see this, we note that 
\begin{equation*} \E[\xi(t)]\ \geq\ \int_{x_0}^\infty \Prob(\xi(t) > x)\, \d x\ \geq\ t \int_{x_0}^\infty \frac{(\log x)^{1 + \eps}}{x}\, \d x\ =\ \infty 
\end{equation*} 
for some $x_0 > 0$ and all sufficiently small $t > 0$. Therefore, Iyer's criterion does not relate to our condition in \eqref{eq:sufficient explosive general}, which applies under finite intensity. Instead, it should be viewed as a condition appropriate for the setting of \eqref{eq:immediately infinite intensity}. 

\noindent 
(ii) The second moment assumption in Theorem \ref{Thm:sufficient-explosion-GWVE} enters as a requirement necessary to apply a survival criterion for Galton-Watson processes in varying environment (GWVE), see Theorem \ref{thm:kersting} below, and has resisted our truncation attempts. 
A survival criterion for GWVE without a second moment assumption 
would likely lead to a sufficient explosion criterion, 
possibly requiring a stronger condition than \eqref{eq:sufficient explosive general}. 

\noindent 
(iii) Condition \eqref{eq:sufficient explosive general} might be improved. The most general version that our approach allows is provided in Lemma \ref{Lem:explosion via GWPVE}. 
\end{remark} 

While Theorem \ref{Thm:sufficient-explosion-GWVE} offers a broadly applicable sufficient condition for explosion, it does cover the case where the cumulative mass function $\mu_+$ is sublinear at zero. The following result addresses this gap by imposing a weaker condition on $\mu_+$, at the expense of an additional independence assumption. 

\begin{theorem}\label{Thm:sufficient-explosion-Amini} Let $\xi$ be a point process that satisfies \eqref{eq:critical}, \eqref{eq:finitemu_+}, and \eqref{eq:positivemu_+}, along with the following conditions: 
\renewcommand{\labelenumi}{(\roman{enumi})} 
\begin{enumerate} 
\item The instant offspring $\xi(0)$ is independent of ${\xi(\cdot \cap (0,\infty))}$. 
\item The distribution of $\xi(0)$ belongs 
to the domain of attraction of the normal distribution or, more generally, 
a stable law with index $\alpha \in (1,2]$. 
\item There exist constants $t, \delta > 0$ such that 
\begin{equation*} 
\E[\xi(0,t]^{1+\delta}]\,<\,\infty. 
\end{equation*} 
\item There exists $\eps \in (0,1)$ such that 
\begin{equation*} 
\int_0^\eps \frac{\mu_+^{-1}(x)}{x |\log x|} \, \d x\,<\,\infty. 
\end{equation*} 
\end{enumerate} 
Then $\xi$ yields an explosive CMJ process. 
\end{theorem}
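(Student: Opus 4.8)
The plan is to realise, inside the CMJ tree, an embedded branching structure that stochastically dominates a Bellman--Harris process of the kind treated by Amini \cite{Amini:2013}, and then to invoke his explosion criterion. First I would decompose the tree into \emph{clusters}: call two individuals cluster-equivalent when they are joined by a chain of parent--child links each of which is instantaneous (i.e.\ the corresponding $X$-value is $0$). By assumption~(i) the instantaneous offspring $\xi(\{0\})$ is independent of $\xi(\cdot\cap(0,\infty))$, so the cluster of the ancestor is a Galton--Watson tree with offspring law $\Prob(\xi(\{0\})\in\cdot)$; by \eqref{eq:critical} it is critical, hence a.s.\ finite, distinct clusters are (conditionally) independent such trees, and conditionally on the cluster structure and the cluster sizes the strictly positive reproduction processes attached to the individuals of a cluster are i.i.d.\ copies of $\xi(\cdot\cap(0,\infty))$. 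I then regard each cluster as a single ``macro-individual'', born at the birth time of its root, carrying a random \emph{size} $\mathcal N$ (its number of individuals), and having as offspring macro-individuals the clusters rooted at the strictly positive children of the individuals it contains.

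Next I would record the tail of $\mathcal N$. By assumption~(ii) and the classical asymptotics for the total progeny of a critical Galton--Watson process, $\Prob(\mathcal N>n)=n^{-\beta}\ell(n)$ for a slowly varying $\ell$, where $\beta=1/2$ in the normal case and $\beta=1/\alpha$ in the stable case; in all cases $\beta\in(0,1)$, and this is exactly why~(ii) is imposed. The heavy tail $\beta<1$ is the engine of explosion: among $R$ independent clusters the largest has size of order $R^{1/\beta}$ (since $1-(1-m^{-\beta})^R$ is bounded away from $0$ precisely when $m\asymp R^{1/\beta}$), which is super-polynomial in $n$ once $R$ is a positive power of $n$. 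Across a cluster of size $n$, the number of strictly positive children born by time $s$ equals $\sum_{i=1}^{n}\xi_i(0,s]$, a sum of $n$ i.i.d.\ terms of mean $\mu_+(s)$; assumption~(iii) gives a uniform $(1+\delta)$-moment bound for $\xi(0,s]$ over small $s$, which I would use to show this sum concentrates around $n\,\mu_+(s)$, so that a size-$n$ macro-individual has at least of order $n\,\mu_+(s)$ offspring macro-individuals born within time $s$, with probability tending to $1$ as $n\,\mu_+(s)\to\infty$. Choosing $s=s(n)\defeq\mu_+^{-1}(n^{-\eta})$ with $\eta\in(0,1-\beta)$ fixed, so that $n\,\mu_+(s(n))\asymp n^{1-\eta}\to\infty$, the largest offspring cluster has size of order $n^{(1-\eta)/\beta}$ with exponent $(1-\eta)/\beta>1$. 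Iterating along a distinguished line of ever-larger clusters produces sizes $n_k$ growing doubly exponentially and total additional delay $\sum_k\mu_+^{-1}(n_k^{-\eta})$; since $\eta\log n_k$ grows geometrically in $k$, a condensation / change-of-variables comparison yields
\[
\sum_{k\ge1}\mu_+^{-1}\!\big(n_k^{-\eta}\big)<\infty
\quad\Longleftrightarrow\quad
\int_0^\eps\frac{\mu_+^{-1}(x)}{x\,|\log x|}\,\d x<\infty,
\]
which is assumption~(iv). Equivalently, the embedded macro-process dominates a Bellman--Harris process whose offspring law has a heavy tail of index in $(0,1)$ as required in \cite{Amini:2013} and whose lifetime law is built from $\mu_+$ so that Amini's integral test for that process is exactly~(iv); since that Bellman--Harris process explodes, so does $(\cZ_t)_{t\ge0}$ by monotonicity of the coupling.

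The hard part will be converting the heuristic ``distinguished line of ever-larger clusters'' into an honest comparison with a genuine Bellman--Harris process. Both the number of offspring of a macro-individual and the times after which they appear depend on its size $n$, so the natural object is a size-dependent (multitype) Bellman--Harris process rather than a single-type one; I expect to control this by discretizing cluster sizes into dyadic levels $[2^k,2^{k+1})$, treating each level as a type, and then passing to a single-type lower bound by keeping, at each step, only those offspring clusters whose size exceeds the current typical scale, arranging the coupling so that Amini's theorem applies (or, alternatively, running the whole argument directly via a second-moment bookkeeping). This requires: (a) quantitative concentration of $\sum_{i=1}^{n}\xi_i(0,s]$ around $n\mu_+(s)$, uniformly over the relevant ranges of $n$ and $s$, which is what~(iii) provides; (b) Karamata-type estimates ensuring the slowly varying factor $\ell$ in the tail of $\mathcal N$ does not affect the convergence of $\sum_k\mu_+^{-1}(n_k^{-\eta})$, hence preserves the equivalence with~(iv); and (c) a second-moment or survival argument guaranteeing that the event on which the construction succeeds has positive probability --- which is precisely the assertion of explosion.
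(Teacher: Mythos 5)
Your high-level strategy matches the paper's: decompose the tree into instantaneous clusters, observe each cluster is the total progeny $Y_\infty$ of a critical Galton--Watson tree with offspring law $\xi\{0\}$, compute the regularly varying tail of $Y_\infty$ (Dwass plus a local limit theorem, giving index $1/\alpha\in[1/2,1)$), and feed this into Amini's criterion in the form of \cite[Lemma 5.8]{Komjathy:2016}. Your diagnosis of why (ii) is imposed is exactly right.

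Where you diverge — and where the gap is — is in insisting on tracking \emph{all} strictly positive children of each individual in a cluster, which forces you into a size-dependent/multitype macro-process, a concentration argument for $\sum_{i=1}^n\xi_i(0,s]$, and the heuristic ``distinguished line of ever-larger clusters.'' You acknowledge this is the hard part and do not resolve it, so as written there is a genuine hole. The paper sidesteps the whole difficulty with one monotonicity step: by Proposition \ref{prop:comparison}, it suffices to prove explosion for the smaller point process $\xi'=\xi(0)\delta_0+\delta_W$, where $W\defeq\inf\{x>0:\xi(0,x]>0\}$ is the time of the \emph{first} strictly positive birth. All later strictly positive children are simply discarded. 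After the cluster reduction, $\xi'$ is then a genuine single-type Bellman--Harris process $\sum_{j=1}^{Y_\infty}\delta_{W_j}$ with $W\perp Y_\infty$ (thanks to (i)), and Lemma \ref{lemma:Ydelta_0+delta_W} applies verbatim; no concentration, no dyadic types, no greedy line. This is the missing idea: one strictly positive child per individual suffices because the cluster size $Y_\infty$ already supplies the heavy-tailed offspring count that Amini's test needs.

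This also changes the role of assumption (iii). You propose to use the $(1+\delta)$-moment for a law-of-large-numbers-type concentration of $\sum_i\xi_i(0,s]$ around $n\mu_+(s)$; even granting that, translating your greedy construction into the sharp integral test (iv) is not carried out. The paper instead uses (iii) via a single application of H\"older's inequality (Paley--Zygmund in disguise) to get the pointwise lower bound
\begin{equation*}
F_W(t)\ =\ \Prob(\xi(0,t]>0)\ \geq\ \frac{\mu_+(t)^{q}}{c},\qquad q=(1+\delta)/\delta,
\end{equation*}
for small $t$, hence $F_W^{-1}(y)\leq\mu_+^{-1}((cy)^{1/q})$; the substitution $x=(cy)^{1/q}$ shows that the weight $\d x/(x|\log x|)$ is invariant, so (iv) becomes exactly the integral condition \eqref{eq:Komjathy} for $F_W^{-1}$. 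If you adopt the ``keep only the first positive child'' reduction, your cluster picture and tail analysis slot in directly and the remaining work is just this H\"older computation.
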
 

Note that conditions (i), (ii), and (iii) are always met by Poisson point processes. In this case, and assuming a local convexity condition on $\mu_+$, condition (iv) is both necessary and sufficient for $\xi$ to yield an explosive CMJ process. 
\begin{theorem}\label{Thm:explosion-Poisson} 
Let $\xi$ be a Poisson point process that satisfies \eqref{eq:critical}, \eqref{eq:finitemu_+}, and \eqref{eq:positivemu_+}. Furthermore, assume that the cumulative mass function $\mu_+$ is convex in a neighborhood of zero. Then $\xi$ yields an explosive CMJ process if and only if 
\begin{equation}\label{eq:explosion-integral-cond} 
\int_0^\eps \frac{\mu_+^{-1}(x)}{x |\log x|} \, \d x\,<\,\infty. 
\end{equation} 
for some $\eps>0$. 
\end{theorem}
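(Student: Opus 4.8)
I would treat the two implications separately; the forward one is essentially a corollary of Theorem~\ref{Thm:sufficient-explosion-Amini}, and the reverse one is where the real work lies.

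\textbf{Sufficiency.} First I would note that a Poisson point process automatically meets hypotheses (i)--(iii) of Theorem~\ref{Thm:sufficient-explosion-Amini}: by the independence properties of Poisson processes $\xi(0)=\xi\{0\}$ is independent of $\xi(\cdot\cap(0,\infty))$ and is Poisson with parameter $\mu(0)=1$, hence has moments of all orders and so lies in the domain of attraction of the normal law; and $\xi(0,t_0]$ is Poisson with finite parameter $\mu_+(t_0)$, so (iii) holds for every $\delta>0$. Since hypothesis (iv) there is exactly \eqref{eq:explosion-integral-cond}, Theorem~\ref{Thm:sufficient-explosion-Amini} yields explosion; convexity of $\mu_+$ is not used for this direction.

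\textbf{From explosion to a fixed-point equation.} For the converse, assume $\xi$ yields an explosive process and set $F(t)\defeq\Prob(T\leq t)$, $\Fbar\defeq1-F$. Conditioning on the ancestor's reproduction and using the branching property gives $T\overset{d}{=}\min\bigl(\min_{j\leq K}T_j^{(0)},\ \inf_i(Y_i+T_i)\bigr)$, where $K$ is the Poisson$(1)$ number of instantaneous children, $\{Y_i\}$ is an independent Poisson process on $(0,\infty)$ with intensity $\mu_+$ listing the strictly positive birth times, and the $T_j^{(0)},T_i$ are i.i.d.\ copies of $T$. Evaluating the generating function of $K$ at $\Fbar(t)$ and the Laplace functional of $\{Y_i\}$ at $y\mapsto\Fbar(t-y)$, one obtains the smoothing fixed-point equation
\begin{equation}\label{eq:fpe-plan}
\Fbar(t)\,=\,\exp\!\Bigl(-F(t)-\int_{(0,t]}F(t-y)\,\mu_+(\d y)\Bigr),\qquad t\geq0 .
\end{equation}
In particular $F<1$, $F(0{+})=0$ and $F\not\equiv0$ (by explosivity). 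With $R(s)\defeq-\log(1-s)-s=\sum_{k\geq2}s^k/k\geq s^2/2$, equation \eqref{eq:fpe-plan} reads $R(F(t))=\int_{(0,t]}F(t-y)\,\mu_+(\d y)$. Writing $t^\ast\defeq\essinf T<\infty$, the function $F$ vanishes on $[0,t^\ast)$, so the integrand vanishes for $y>t-t^\ast$, and bounding $F(t-y)\leq F(t)$ yields the crude bound $F(t)\leq2\mu_+(t-t^\ast)$ for $t>t^\ast$.

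\textbf{The recursion.} The core step is to iterate \eqref{eq:fpe-plan}. Fix $\eps_0>0$ small enough that $\mu_+$ is convex, hence continuous, on $(0,\eps_0]$; put $B\defeq\mu_+(\eps_0)<\infty$ and $t_0\defeq t^\ast+\eps_0$, and define inductively $m_k\defeq F(t_k)$, $y_k\defeq\mu_+^{-1}(m_k/4)$ and $t_{k+1}\defeq t_k-y_k$. Since $m_k\leq2\mu_+(t_k-t^\ast)$ we have $\mu_+(t_k-t^\ast)>m_k/4$, hence $0<y_k<t_k-t^\ast$, the iteration stays in $(t^\ast,t_0]$, and $m_k>0$ for all $k$. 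Splitting the integral in \eqref{eq:fpe-plan} at $y_k$ -- using $\mu_+(y_k)=m_k/4$ on $(0,y_k]$ and $F(t_k-y)\leq F(t_{k+1})=m_{k+1}$ on $(y_k,t_k-t^\ast]$ -- gives
\begin{equation*}
\tfrac12 m_k^2\,\leq\,R(m_k)\,=\,\int_{(0,t_k-t^\ast]}F(t_k-y)\,\mu_+(\d y)\,\leq\,\tfrac14 m_k^2+B\,m_{k+1},
\end{equation*}
so $m_{k+1}\geq m_k^2/(4B)$. Setting $a_k\defeq m_k/(4B)\in(0,\tfrac12]$ this is $a_{k+1}\geq a_k^2$, whence $a_k\geq a_0^{2^k}$ and therefore $m_k\geq4B\,c^{2^k}$ with $c\defeq m_0/(4B)\in(0,\tfrac12]$: the probabilities $F(t_k)$ decay at most doubly exponentially. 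Consequently $y_k=\mu_+^{-1}(m_k/4)\geq\mu_+^{-1}(B\,c^{2^k})$, while $\sum_{k\geq0}y_k=t_0-\lim_k t_k\leq\eps_0$, so $\sum_{k\geq0}\mu_+^{-1}(B\,c^{2^k})\leq\eps_0<\infty$. Finally the substitution $x=B\,c^{2^s}$ identifies this series -- a Riemann sum of a monotone integrand -- with a constant multiple of $\int_0^{\eta}\frac{\mu_+^{-1}(x)}{x\,|\log x|}\,\d x$ for some $\eta>0$, and \eqref{eq:explosion-integral-cond} follows.

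\textbf{Main obstacle.} The delicate point is choosing the recursion so that the two features of the critical-immediate-offspring regime surface: the quadratic lower bound $R\geq\tfrac12(\cdot)^2$ of the smoothing transform, and the finite total mass $B=\mu_+(\eps_0)$ of $\mu_+$ near $0$. Together these force the doubly-exponential lower bound $m_k\gtrsim c^{2^k}$, and it is precisely this rate -- not the single-exponential rate that cruder two-sided estimates give -- that is responsible for the extra factor $|\log x|$ in the integral test; probabilistically it reflects that an exploding line of descent must pass through instantaneous-offspring clusters of doubly-exponentially growing size, since the total progeny of a critical Galton--Watson process has an $n^{-1/2}$ tail. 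The remaining points -- a rigorous derivation of \eqref{eq:fpe-plan}, the handling of the additive shift $t^\ast=\essinf T$, the use of convexity only through continuity of $\mu_+$, and the Riemann-sum comparison -- are routine.
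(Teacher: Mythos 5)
Your sufficiency direction is exactly what the paper does: Poissonian $\xi$ automatically satisfies (i)--(iii) of Theorem~\ref{Thm:sufficient-explosion-Amini}, and (iv) is \eqref{eq:explosion-integral-cond}, so that theorem applies directly. For the necessity direction, however, you take a genuinely different and more streamlined route than the paper. The paper's proof (Section~\ref{subsec:explosion-poisson}) constructs an explicit piecewise-constant majorant $\psi$ of $F$ built from $a_n=\sum_{k\geq n}\mu_+^{-1}(\delta^k)/k$, establishes $F(t)\leq\psi(4t)$ by iterating the smoothing transform $\Smooth$ from $1-\psi(4\cdot)$ and invoking the attracting-fixed-point Theorem~\ref{thm:uniqueness}(b), and -- because the definition of $\psi$ already presupposes the summability condition \eqref{eq:sum-condition} -- it must in addition linearize $\mu_+$ near zero (the $\nu_n$ approximation) and pass to the limit by monotone convergence. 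In contrast, you iterate the fixed-point equation $R(F(t))=\int_{(0,t)}F(t-y)\,\mu_+(\d y)$ once per step, choosing the split point $y_k=\mu_+^{-1}(m_k/4)$ so that the near-field contribution is dominated by $m_k^2/4$ and the far-field contribution by $B\,m_{k+1}$; this gives the recursion $m_{k+1}\geq m_k^2/(4B)$ directly, from which $\sum_k\mu_+^{-1}(Bc^{2^k})\leq\eps_0$ and hence the integral test follow by a Riemann-sum comparison. Your route avoids Theorem~\ref{thm:uniqueness}(b), the $\nu_n$ approximation, and the paper's Jensen/concavity estimates in Lemma~\ref{lemma:psi-comparison}; as you observe, convexity enters only through continuity and strict monotonicity of $\mu_+$ near $0$ (needed to have $\mu_+(\mu_+^{-1}(y))=y$), whereas the paper uses it more substantially (e.g.\ $\mu_+(x/4)\leq\mu_+(x)/4$, concavity of $\mu_+^{-1}$). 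One small remark: by Corollary~\ref{cor:minimal fixed point}, the essential infimum $t^\ast$ of $T$ is in fact $0$ under \eqref{eq:critical}, so carrying a nonzero $t^\ast$ is unnecessary -- but harmless. Also, to make the final Riemann-sum comparison clean, shrink $\eps_0$ so that $B=\mu_+(\eps_0)<1$; then $|\log x|\geq 2^s|\log c|$ along $x=Bc^{2^s}$ and the change of variables gives $\int_0^{Bc}\mu_+^{-1}(x)/(x|\log x|)\,\d x\leq(\log 2)\sum_k\mu_+^{-1}(Bc^{2^k})\leq(\log 2)\,\eps_0$, a somewhat more explicit bound than the paper's.
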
 

\begin{remark} 
In Section \ref{subsec:comparison}, we present comparison results that relax the assumptions of Theorem \ref{Thm:explosion-Poisson} (similar to \cite[Section 4]{Grey:1974} for Bellman--Harris processes). For example, Proposition \ref{prop:Poisson-comparison} shows that if there exists a convex function $\varphi: [0, \infty) \to [0,\infty)$ such that 
\begin{equation*} 
\varphi(t)\,\leq\,\mu_+(t)\,\leq\,a \varphi(t) 
\end{equation*} 
for all sufficiently small $t$ and some constant $a > 1$, then condition \eqref{eq:explosion-integral-cond} still characterizes the explosiveness of the CMJ process with Poisson reproduction point process $\xi$. 
\end{remark} 

\subsection{Smoothing transform results} 

The smoothing transform $\Smooth$, associated with the reproduction point process $\xi$, is a self-map on the space $\cM$ of non-increasing, left-continuous functions $\phi: \mathbb{R} \to [0, 1]$ with $\phi(t) = 1$ for all $t \leq 0$. It is defined by 
\begin{align} \label{eq:ST} 
\Smooth \! \phi (t)\, \defeq\, \E\bigg[\prod_{|u|=1} \phi(t - S_u)\bigg], \quad t \in \R. 
\end{align} 
It is straightforward to verify that $\Smooth$ is well-defined and order-preserving, i.e., $\phi \leq \psi$ implies $\Smooth \! \phi \leq \Smooth \! \psi$, for all $\phi, \psi \in \cM$. 
The smoothing transform plays an important role in the theory of branching random walks, see \cite{Alsmeyer+al:2012} for recent results and a comprehensive overview of the literature. 
It also arises naturally in the context of explosion of CMJ processes, as we now explain. 

Let $T_u$ denote the explosion time of the CMJ process within the subtree rooted at individual $u$, defined analogously to $T$ in \eqref{eq:explosion-time}. Observe that $T_u$ has the same distribution as $T$, and that 
\begin{equation*}
T\,=\,\min_{|u|=1} (S_u + T_u) \quad \text{almost surely} 
\end{equation*} 
where the $T_u$ for $|u| = 1$ are mutually independent and also independent of the $S_u$. 

Let $F(t) \defeq \Prob(T < t)$, for $t \in \mathbb{R}$, be the left-continuous distribution function of $T$, and let $\Fbar \defeq 1 - F$ be the corresponding survival function. 
Then, $\Fbar$ satisfies the functional equation of the smoothing transform, i.e., $\Smooth \! \Fbar = \Fbar$. 
In fact, $\Fbar$ is the minimal fixed point of $\Smooth$, a result first established by Grey in the setting of Bellman-Harris processes \cite[Theorem 1]{Grey:1974}, and later extended to general CMJ processes in \cite[Lemma 2.1]{Komjathy:2016}. 
Another fixed point of $\Smooth$ is the constant function $\phi \equiv 1$, which we refer to as the \emph{trivial fixed point}. 

The following theorem is our main contribution to the theory of the smoothing transform. 
\begin{theorem}\label{thm:uniqueness} 
Assume \eqref{eq:critical}. 
Then the following assertions hold: 
\renewcommand{\labelenumi}{(\alph{enumi})} 
\begin{enumerate} 
\item 
If $\phi \in \cM$ is non-trivial and $\phi = \Smooth \! \phi$, 
then $\phi(t) = \Fbar(t-c)$ for all $t \geq 0$ and some $c \geq 0$. 
\item 
Any $\phi \in \cM$ with $\phi(t) < 1$ for all $t > 0$ satisfies 
$\lim_{n \to \infty} \Smooth^n \! \phi(t) = \Fbar(t)$ for any $t \geq 0$. 
\end{enumerate} 
\end{theorem}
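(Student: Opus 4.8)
The plan is to analyze the behaviour of a general non-trivial fixed point $\phi$ near $t=0$ and to reduce everything to the minimality statement in Proposition \ref{prop:minimal-fixed-point} together with an appropriate shift. For part (a), by Proposition \ref{prop:minimal-fixed-point} we already have $\phi \geq \Fbar$, so $\phi$ sits above the minimal fixed point; the task is to show that it is exactly a \emph{shift} of $\Fbar$ and not some genuinely larger solution. Set
\begin{equation*}
c\,\defeq\,\sup\{t \geq 0:\,\phi(t)=1\},
\end{equation*}
which is finite since $\phi$ is non-trivial and (by an argument analogous to Corollary \ref{cor:minimal fixed point}) is actually attained, so $\phi(c)=1$ by left-continuity. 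Then $\psi(t)\defeq\phi(t+c)$ lies in $\cM$, solves $\psi=\Smooth\psi$ because \eqref{eq:ST} only involves differences of arguments and $S_u \geq 0$, and $\psi(t)<1$ for all $t>0$. The heart of the matter is then to prove the following sharpened minimality: \emph{any} $\psi \in \cM$ with $\psi=\Smooth\psi$ and $\psi(t)<1$ for $t>0$ must equal $\Fbar$. Once this is established, $\phi(t+c)=\Fbar(t)$, i.e. $\phi(t)=\Fbar(t-c)$ for $t\geq c$, and for $0\leq t\leq c$ we have $\phi(t)=1=\Fbar(t-c)$ since $\Fbar$ vanishes on $(-\infty,0)$ — wait, $\Fbar(0)=1$ and $\Fbar$ is left-continuous, so $\Fbar(s)=1$ for $s\leq 0$, giving the claim on all of $[0,\infty)$.

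To prove the sharpened minimality, the natural route is an iteration-monotonicity argument. Given such a $\psi$, consider the martingale $M_n^\psi(t)$ from Lemma \ref{lemma:M^phi_n}; it converges a.s.\ and in mean to $M_\infty^\psi(t)$ with $\E[M_\infty^\psi(t)]=\psi(t)$ and $M_\infty^\psi(t)\geq\1\{T>t\}$. The strategy is to show $M_\infty^\psi(t)=\1\{T>t\}$ a.s., which forces $\psi(t)=\Prob(T>t)=\Fbar(t)$ at continuity points and then everywhere by left-continuity. On $\{T>t\}$ we know $M_\infty^\psi(t)\leq 1$ trivially and $\geq 1$ from the lemma, hence $=1=\1\{T>t\}$. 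So the real content is to show $M_\infty^\psi(t)=0$ a.s.\ on $\{T\leq t\}$. Here is where the hypothesis $\psi(s)<1$ for all $s>0$ enters decisively: on $\{T\leq t\}$, infinitely many particles are born by time $t$, so for each fixed $s\in(0,t-T)$ there are, for all large $n$, infinitely many $u$ with $|u|=n$ and $t-S_u>s$, whence $\phi(t-S_u)\leq\psi(s)<1$ — actually one must be careful since $M_n^\psi$ is a product over the $n$-th generation and particles may die out, but under \eqref{eq:critical} the relevant instantaneous Galton--Watson trees are critical hence a.s.\ finite, and a second-moment or direct branching-process estimate shows that in fact the number of $n$-th generation particles with birth time $\le t-s$ tends to infinity on the explosion event. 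A product of infinitely many factors each $\le\psi(s)<1$ is $0$. This is the step I expect to be the main obstacle: making rigorous that on $\{T\le t\}$ the count $\#\{|u|=n:\,S_u\le t-s\}\to\infty$, or at least does not stay bounded, so that $M_n^\psi(t)\to 0$. One clean way is to observe that if this count stayed bounded along a subsequence, then (by a compactness/diagonal argument on which finitely many lineages survive) $T$ would be the minimum of finitely many shifted i.i.d.\ copies of $T$, each shifted by a positive amount bounded below, contradicting $T\le t$ together with Corollary \ref{cor:minimal fixed point}; alternatively one invokes that an explosive CMJ process, restricted to $[0,\eps]$, still explodes and has a reproduction law whose associated embedded Galton--Watson process (counting all descendants born within the horizon) is supercritical on the explosion event.

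For part (b), the iteration $\Smooth^n\phi$ is monotone decreasing in $n$ when $\phi\le\Smooth\phi$ but here $\phi$ is arbitrary in $\cM$ with $\phi(t)<1$ for $t>0$, so monotonicity in $n$ need not hold; instead I would sandwich. On one side, since $\phi\le 1$ and $\Smooth$ is order-preserving, $\Smooth^n\phi\le\Smooth^n 1=1$; more usefully, $\Smooth^n\phi(t)=\E[M_n^\phi(t)]$, and by the first part of Lemma \ref{lemma:M^phi_n}, $M_n^\phi(t)\to 1$ a.s.\ on $\{T>t\}$, while on $\{T\le t\}$ the same infinite-product-with-factors-$<1$ argument as above (now not needing $\phi$ to be a fixed point, only $\phi(s)<1$ for $s>0$) gives $M_n^\phi(t)\to 0$ a.s.; bounded convergence then yields $\Smooth^n\phi(t)\to\Prob(T>t)=\Fbar(t)$ at every continuity point of $\Fbar$, and one upgrades to all $t$ via left-continuity and monotonicity of each $\Smooth^n\phi$. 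Thus part (b) in fact reuses the key lemma from part (a), and part (a) then follows by applying (b) to the shifted fixed point $\psi=\phi(\cdot+c)$: $\psi=\Smooth^n\psi\to\Fbar$, so $\psi=\Fbar$. This makes (b) logically the core statement and (a) a corollary, which I would reflect in the write-up by proving (b) first.
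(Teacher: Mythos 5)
Your overall architecture is correct and matches the paper's: reduce (a) to (b) via an additive shift, analyze the martingale $M_n^\phi(t)$ from Lemma~\ref{lemma:M^phi_n}, and show that $M_n^\phi(t)\to \1\{T>t\}$ (at least at continuity points of $\Fbar$), concluding by dominated convergence. You also correctly put your finger on the single hard step: proving that on $\{T\le t\}$ the generation-$n$ counts $Z_n(t-\eps)=\#\{|u|=n:\, S_u\le t-\eps\}$ tend to infinity almost surely.

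However, this step is a genuine gap in your write-up, and neither of your two suggestions for closing it is workable as stated. Your ``compactness/diagonal'' idea — that a bounded generation count along a subsequence would express $T$ as a minimum over finitely many shifted copies and contradict explosion — is not precise: boundedness along a subsequence does not produce finitely many persistent lineages, and without the lemma you are trying to prove you have no control on which particles are where. Your second suggestion — that the embedded horizon-$\eps$ Galton--Watson process is ``supercritical on the explosion event'' — is circular: its mean is $\mu(\eps)$, which can equal $1$ (precisely the delicate regime), and conditioning on the explosion event does not turn a critical process supercritical in any elementary sense. The paper closes this gap with a short argument you did not find (Lemma~\ref{lemma:generation-explosion}): apply the same martingale machinery with $\phi=\Fbar$ itself, so that $M_n^{\Fbar}(t)\to M_\infty^{\Fbar}(t)$ in mean with $\E[M_\infty^{\Fbar}(t)]=\Fbar(t)$; decomposing over $\{T>t\}$ and $\{T\le t\}$, and using that at a continuity point $\Fbar(t)=\Prob(T>t)$ together with $M_\infty^{\Fbar}(t)=1$ on $\{T>t\}$, forces $M_\infty^{\Fbar}(t)=0$ a.s.\ on $\{T\le t\}$. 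Then, since $F(t)<1$ (Lemma~\ref{lemma:F<1}), the inequality $\log(1-x)\geq -cx$ for $x\le F(t)$ gives $M_n^{\Fbar}(t)\geq \exp(-c\sum_{|u|=n}F(t-S_u))$, hence $\sum_{|u|=n}F(t-S_u)\to\infty$, hence $Z_n(t)\to\infty$ a.s.\ on $\{T\le t\}$. This self-referential use of the fixed point $\Fbar$ is the key idea your proposal is missing.

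A secondary gap: your extension from continuity points of $\Fbar$ to all $t>0$ (``upgrades via left-continuity and monotonicity of each $\Smooth^n\phi$'') is too vague to be a proof; left-continuity of the candidate limit does not by itself give convergence of $\Smooth^n\phi(t)$ at discontinuity points. The paper handles this by sandwiching: an upper bound via the shifted function $\phi^\eps$ and translation-invariance of $\Smooth$, and a lower bound via $\Smooth^n\1_{(-\infty,0]}\to\Fbar$ from \cite[Lemma 2.1]{Komjathy:2016}, both evaluated near a continuity point $t-\eps$, then $\eps\downarrow 0$. You would need to spell out something of this kind.
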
 

Assertion (a) was previously established in \cite[Corollary 5.2]{Jagers+Roesler:2004} 
under the additional assumption $\#\{|u| = 1\,:\,S_u < \infty\} < \infty$ almost surely. 
Assertion (b) states that $\Fbar$ is the unique attracting fixed point 
among those $\phi \in \cM$ with $\phi(t) < 1$ for all $t > 0$. 
This result was shown for Bellman-Harris processes in \cite{Grishechkin:1987}, assuming additionally that the displacement distribution admits a continuous density at zero. 

The proof of Theorem \ref{thm:uniqueness} also yields new proofs of several known results, including the minimality of $\Fbar$ among fixed points of $\Smooth$ (Proposition \ref{prop:minimal-fixed-point}) and the fact that explosion, when it occurs, can happen arbitrarily fast with positive probability (i.e., $\Fbar(t) < 1$ for all $t > 0$; see Corollary \ref{cor:minimal fixed point}). These statements also follow directly from Theorem \ref{thm:uniqueness}, highlighting its unifying nature. 

We will apply Theorem \ref{thm:uniqueness} in the proof of Theorem \ref{Thm:explosion-Poisson}. 

\subsection{Outline} 

The paper is organized as follows. In Section \ref{sec:smoothing transform}, we present the proof of Theorem \ref{thm:uniqueness}, our main result concerning the smoothing transform. 
In Section \ref{subsec:sufficient-explosion-GWVE}, we prove Theorem \ref{Thm:sufficient-explosion-GWVE} by constructing a Galton-Watson process in a varying environment and showing that its survival implies explosion of the original CMJ process. 
Section \ref{subsec:sufficient-explosion-Amini} uses a result of Amini et al.\ \cite{Amini:2013} and a comparison with Bellman-Harris processes to establish Theorem \ref{Thm:sufficient-explosion-Amini}. 
Section \ref{subsec:explosion-poisson} focuses on Poisson point processes; to prove Theorem \ref{Thm:explosion-Poisson}, we adapt the general approach of Grishechkin \cite{Grishechkin:1987} to our setting. 
Finally, in Section \ref{subsec:comparison}, we introduce comparison techniques for establishing explosion of CMJ processes with Poisson reproduction, based on intensity measure comparisons. 

\section{Proof of Theorem \ref{thm:uniqueness}} \label{sec:smoothing transform} 

We begin by providing a new proof of the fact that $\Fbar$ is the minimal fixed point of the smoothing transform $\Smooth$.  
\begin{proposition}\label{prop:minimal-fixed-point} 
If $\phi \in \cM$ satisfies $\phi = \Smooth \! \phi$, then $\Fbar \leq \phi$. 
In particular, $\Smooth$ has a non-trivial fixed point if and only if the associated CMJ process is explosive. 
\end{proposition} 

Our proof follows as an almost immediate consequence of the next lemma. 
\begin{lemma}\label{lemma:M^phi_n} 
Let $\phi \in \cM$ and define 
\begin{equation}\label{eq:M^phi_n} 
M^\phi_n(t) \defeq \prod_{|u|=n} \phi(t - S_u),\quad n \in \N_0,\ t \geq 0. 
\end{equation} 
Then, for all $t \geq 0$, 
\begin{equation}\label{eq:M-limit=1} 
\Prob\big(M_n^{\phi}(t) = 1\text{ for all but finitely many }n\,\big|\,T>t\big)\,=\,1. 
\end{equation} 
Moreover, if $\phi \in \cM$ satisfies $\Smooth \! \phi = \phi$, then $M^\phi_n(t)$ converges almost surely and in mean to a random variable $M^\phi_\infty(t)$, which satisfies $\E[M^\phi_\infty(t)] = \phi(t)$ and $M^\phi_\infty(t) \geq \1\{T > t\}$ almost surely. 
\end{lemma}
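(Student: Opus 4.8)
The plan is to prove the two assertions of the lemma almost independently and then glue them together. Begin with the elementary observation that, since $\phi \in \cM$ is non-increasing with $\phi(0) = 1$, one has $\phi(s) = 1$ for every $s \le 0$ (with the convention $\phi(-\infty) = 1$ for not-yet-born individuals); hence a factor $\phi(t - S_u)$ in $M_n^\phi(t)$ can differ from $1$ only when $u$ is born strictly before time $t$. For $t = 0$ this already forces $M_n^\phi(0) = 1$ for all $n$, so one may assume $t > 0$. To get \eqref{eq:M-limit=1} I would argue on $\{T > t\}$: by definition of the explosion time, $\{T > t\} \subseteq \{\cZ_t < \infty\}$ (if $\cZ_t = \infty$ then $T \le t$), so on $\{T > t\}$ the set $A \defeq \{u \in \I : S_u \le t\}$ is finite and contains $\varnothing$, whence $N \defeq \max\{|u| : u \in A\}$ is an a.s.\ finite random variable there. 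For every $n > N$ and every $u$ with $|u| = n$ we then have $S_u > t$, so $\phi(t - S_u) = 1$ and therefore $M_n^\phi(t) = 1$; this is precisely \eqref{eq:M-limit=1}.

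For the second part, assume $\Smooth\phi = \phi$ and set $\F_n \defeq \sigma(\xi_u : |u| < n)$, so that the birth times $S_u$ with $|u| = n$, and hence $M_n^\phi(t)$, are $\F_n$-measurable, while the point processes $\xi_u$ with $|u| = n$ are i.i.d.\ copies of $\xi$, independent of $\F_n$. Writing $\xi_u = \sum_j \delta_{X_{u,j}}$ and $M_{n+1}^\phi(t) = \prod_{|u|=n}\prod_j \phi\big((t - S_u) - X_{u,j}\big)$, conditional independence across generation $n$ gives
\begin{align*}
\E\big[M_{n+1}^\phi(t) \,\big|\, \F_n\big]
&\,=\, \prod_{|u|=n} \E\Big[{\textstyle\prod_j}\, \phi\big((t - S_u) - X_{u,j}\big) \,\Big|\, \F_n\Big] \\
&\,=\, \prod_{|u|=n} \Smooth\phi(t - S_u) \,=\, M_n^\phi(t),
\end{align*}
using $\Smooth\phi = \phi$ in the last step. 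Thus $(M_n^\phi(t))_{n \ge 0}$ is a $[0,1]$-valued $(\F_n)$-martingale; being bounded, it converges a.s.\ and in $L^1$ to some $M_\infty^\phi(t)$, and $L^1$-convergence together with $\E[M_n^\phi(t)] = \E[M_0^\phi(t)] = \phi(t)$ yields $\E[M_\infty^\phi(t)] = \phi(t)$. Finally, combining this a.s.\ convergence with \eqref{eq:M-limit=1}: on $\{T > t\}$ we have $M_n^\phi(t) = 1$ for all large $n$, hence $M_\infty^\phi(t) = 1$; and on $\{T \le t\}$ trivially $M_\infty^\phi(t) \ge 0 = \1\{T > t\}$. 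Therefore $M_\infty^\phi(t) \ge \1\{T > t\}$ a.s.

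I expect the only delicate point to be the bookkeeping with infinite products. A single generation may contain infinitely many individuals (since $\xi[0,\infty)$ may be infinite), and on the event $\{\cZ_t = \infty\}$ even infinitely many of them may be born before time $t$, so $M_n^\phi(t)$ is genuinely an infinite product: one must first note it is a well-defined $[0,1]$-valued random variable, namely the decreasing limit of its finite partial products. The interchange of this infinite product over $|u| = n$ with the conditional expectation in the display above then has to be justified by monotone convergence for conditional expectations applied to that decreasing sequence of partial products, combined with the elementary finite-product case and conditional independence. Beyond this, everything is routine.
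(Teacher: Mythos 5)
Your proof is correct and takes essentially the same route as the paper: the paper also establishes \eqref{eq:M-limit=1} first (via the observation that $\min_{|u|=n} S_u \uparrow T$ a.s., rather than your inclusion $\{T>t\}\subseteq\{\cZ_t<\infty\}$ — both arguments are fine and of the same depth) and then invokes bounded martingale convergence for the filtration $\F_n=\sigma(\xi_u:|u|<n)$, exactly as you do. One clarification regarding your closing worry about infinite products: by local finiteness of $\xi$ and induction over generations, for every fixed $n$ the number of $u$ with $|u|=n$ and $S_u\le t$ is a.s.\ finite (even on $\{\cZ_t=\infty\}$, where the infinitely many early births are spread over infinitely many generations), so $M_n^\phi(t)$ is a.s.\ a product with only finitely many factors different from $1$ and the interchange with conditional expectation reduces to the finite case; no separate monotone-convergence argument is needed.
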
 

\begin{proof} 
Let $M_n \defeq \min_{|u|=n} S_u$ be the minimal birth time in the $n$-th generation, $n \in \N_0$. Since $M_n \uparrow T$ as $n \to \infty$ by \eqref{eq:T-approx}, it follows that for sufficiently large $n$, we have $t - M_n < 0$ on the event $\{T > t\}$ and therefore $\phi(t - S_u) \geq \phi(t - M_n) = 1$ for all individuals $u$ with $|u| = n$. This implies \eqref{eq:M-limit=1}. 
Next, observe that if $\phi \in \cM$ satisfies $\phi = \Smooth \! \phi$, then $(M^\phi_n(t))_{n \in \N_0}$ is a bounded martingale with respect to the filtration $(\F_n)_{n \in \N_0}$, where $\F_n = \sigma(\xi_u:\,|u| < n)$, $n \in \N_0$. Therefore, it converges almost surely and in mean to some $M^\phi_\infty(t)$, with $\E[M^\phi_\infty(t)]=\phi(t)$. From \eqref{eq:M-limit=1} we then conclude $M_\infty^{\phi}(t) \geq \1\{T > t\}$ almost surely. 
\end{proof} 

\begin{proof}[Proof of Proposition \ref{prop:minimal-fixed-point}.] 
Let $\phi \in \cM$ with $\phi = \Smooth \! \phi$. By applying Lemma \ref{lemma:M^phi_n}, we obtain 
\begin{equation*} 
\phi(t)\ =\ \E[M^\phi_\infty(t)]\ \geq\ \Prob(T > t). 
\end{equation*} 
For every continuity point $t>0$ of $\Fbar$, the right-hand side equals $\Fbar(t)$. Therefore, 
by left continuity of $\Fbar$, we conclude that $\phi(t) \geq \Fbar(t)$ for all $t \geq 0$. 
\end{proof} 

From Proposition \ref{prop:minimal-fixed-point}, we can almost immediately deduce the following corollary, which was also obtained by Komj\'athy \cite[Claim 2.3]{Komjathy:2016}. 
\begin{corollary} \label{cor:minimal fixed point} 
Assume \eqref{eq:critical}. 
If $\Fbar(t)<1$ for some $t > 0$, then $\Fbar(t) < 1$ for all $t > 0$, meaning that an explosive process can explode at any positive time with positive probability. In particular, the following holds for any $\eps > 0$: $\xi$ yields an explosive CMJ process if and only if the same holds for $\xi(\cdot \cap [0,\eps])$. 
\end{corollary} 
\begin{proof} 
If $\Fbar(t)<1$ for some $t$, then $\Fbar$ is a non-trivial fixed point of $\Smooth$. 
Suppose, for contradiction, that there exists $c>0$ such that $\Fbar(c)=1$. 
Define $\phi(t) = \Fbar(t+c)$ for $t \geq 0$. 
Then $\phi$ is an element of $\cM$ that satisfies $\Smooth \! \phi = \phi$, whence by Proposition \ref{prop:minimal-fixed-point} we have $\Fbar(t) \leq \phi(t) = \Fbar(t + c)$ for all $t \in \R$. 
But since $\Fbar$ is non-increasing we conclude that it must be constant, which contradicts $\Fbar(t)<1$ due to $\Fbar(0) = 1$. 
Therefore, $\Fbar(t) < 1$ for all $t > 0$, that is, the CMJ process can explode at any time. 
\end{proof} 

Before proving Theorem \ref{thm:uniqueness}, we establish two auxiliary results. 
\begin{lemma}\label{lemma:F<1} 
Assume that $\xi$ satisfies \eqref{eq:critical}. Then $\Fbar(t) > 0$ for all $t > 0$. 
\end{lemma}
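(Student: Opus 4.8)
The plan is to show that $\Fbar(t) = \Prob(T > t) > 0$ for every fixed $t > 0$, i.e. that the process survives past time $t$ with positive probability, by exhibiting an explicit event of positive probability on which no explosion has occurred up to time $t$. The natural candidate event is the one on which each of the first $n$ generations contributes a controlled, finite number of individuals all of whose birth times lie below some threshold, so that one can iterate. More precisely, I would first reduce to the truncated point process. Since we only care about birth times up to time $t$, and by Proposition \ref{Prop:Jagers} / Corollary \ref{cor:minimal fixed point} considerations the relevant information near $0$ is what matters, fix $t > 0$ and work with $\xi$ restricted to $[0,t]$; by \eqref{eq:critical} we have $\mu[0,t] \geq \mu\{0\} = 1$, and by local finiteness $\xi[0,t] < \infty$ a.s.

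The key step is a first-moment / Borel–Cantelli argument along generations. Let $\cZ_t = \#\{u : S_u \leq t\}$ as in \eqref{eq:Z-def}, and let $\cZ_t^{(n)} = \#\{u : |u| = n,\ S_u \leq t\}$ be the number of generation-$n$ individuals born by time $t$. The idea is that $T \leq t$ forces $\cZ_t = \infty$, hence forces $\cZ_t^{(n)} \geq 1$ for all $n$ (since by $M_n \uparrow T$, every generation must have an individual born by time $t$ if explosion happens by time $t$). So it suffices to show $\Prob(\cZ_t^{(n)} = 0 \text{ for some } n) > 0$, equivalently that there is positive probability that some generation is entirely "slow". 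I would instead argue directly: consider the event $A$ that the ancestor has no instantaneous children, $\xi\{0\} = 0$. If $\Prob(\xi\{0\} = 0) > 0$, then on a sub-event one can further ask that all children of the ancestor are born after time $t$, i.e. $\min_{|u|=1} S_u > t$; on that event $T \geq \min_{|u|=1} S_u > t$, giving $\Fbar(t) > 0$ immediately. The obstruction is the case $\Prob(\xi\{0\} = 0) = 0$, i.e. every individual has at least one instantaneous child a.s.; then $\mu\{0\} = 1$ forces $\xi\{0\} = 1$ a.s. on the event $\{\xi\{0\} \geq 1\}$ which now has probability one — but this is exactly $\Prob(\xi(0) = 1) = 1$, excluded by our standing assumption. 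Hence $\Prob(\xi\{0\} \in \{0\} \cup \{2,3,\dots\}) > 0$, and combined with $\mu\{0\}=1$ this yields $\Prob(\xi\{0\} = 0) > 0$.

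So the argument reduces to two lines once the bookkeeping is set up: (1) $\Prob(\xi\{0\} = 0) > 0$, because otherwise $\xi\{0\} \geq 1$ a.s. and $\E[\xi\{0\}] = 1$ force $\xi\{0\} = 1$ a.s., contradicting $\Prob(\xi(0)=1) < 1$; and (2) on $\{\xi\{0\} = 0\}$, which has positive probability, we have $X_1 = \min_j X_j > 0$ a.s. on a further positive-probability sub-event $\{X_1 > t\}$ — here one needs that $\Prob(X_1 > t \mid \xi\{0\} = 0) > 0$, which holds because $\xi$ is locally finite so $X_1 = \inf\{s : \xi[0,s] \geq 1\}$ is a.s. strictly positive on $\{\xi\{0\}=0\}$, and hence $\Prob(X_1 > t, \xi\{0\}=0) > 0$ for any $t$ (letting $t \downarrow$ along the positive values taken by $X_1$), OR, if $\xi$ is supported on $\{0\}$ after truncation this degenerates — but that case is $\xi = N\delta_0$ with $\E[N]=1$, again forcing $N=1$ a.s., contradiction. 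On the event $\{\xi\{0\}=0,\ X_1 > t\}$ we get $\min_{|u|=1} S_u = X_1 > t$, so $T = \min_{|u|=1}(S_u + T_u) \geq X_1 > t$, giving $\Fbar(t) = \Prob(T > t) > 0$.

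The main obstacle is handling the degenerate sub-cases cleanly: one must rule out, using only \eqref{eq:critical} and $\Prob(\xi(0)=1)<1$, that every individual deterministically produces exactly one instantaneous offspring (or more generally that the positive part of $\xi$ is concentrated arbitrarily close to $0$ in a way that prevents $\{X_1 > t\}$ from having positive probability). Both degeneracies collapse to "$\xi\{0\} = 1$ a.s." via the equality case in $\E[\xi\{0\}] = \mu\{0\} = 1$ together with $\xi\{0\} \in \N_0$, so I expect the clean way to present it is: either $\Prob(\xi\{0\} = 0) > 0$ and then additionally $\Prob(\xi((0,\infty)) = \infty \text{ or } X_1 > t) > 0$ by local finiteness, handing us $T > t$; or $\Prob(\xi\{0\} = 0) = 0$, whence $\xi\{0\} \geq 1$ a.s. and $\E[\xi\{0\}] = 1$ give $\xi\{0\} = 1$ a.s., contradicting the standing assumption. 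I would write this up in about a dozen lines.
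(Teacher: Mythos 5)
Your opening step is correct and essentially matches the paper's: from $\mu\{0\}=\E[\xi\{0\}]=1$, $\xi\{0\}\in\N_0$, and $\Prob(\xi\{0\}=1)<1$ one concludes $\Prob(\xi\{0\}=0)>0$, and local finiteness (right-continuity of $t\mapsto\xi[0,t]$) upgrades this to $\Prob(\xi[0,\eps]=0)>0$ for some $\eps>0$. But the second half of your argument has a genuine gap. You claim that for an \emph{arbitrary} $t>0$ the event $\{X_1>t\}=\{\xi[0,t]=0\}$ has positive probability, and your justification (``$X_1>0$ a.s.\ on $\{\xi\{0\}=0\}$, letting $t\downarrow$ along the positive values taken by $X_1$'') only yields this for $t$ below some threshold. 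The assumptions do not force $\Prob(\xi[0,t]=0)>0$ for all $t$: consider $\xi=2B\delta_0+\delta_1$ with $B\sim\mathrm{Ber}(1/2)$, which satisfies \eqref{eq:critical} and $\Prob(\xi(0)=1)=0$, yet $\Prob(\xi[0,1]=0)=0$. For such $\xi$ your event has probability zero once $t\geq 1$, so your proof stalls at small $t$.

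The paper closes exactly this gap with a bootstrap along the fixed-point equation \eqref{eq:fpe}: having secured $\eps>0$ with $\Prob(\xi[0,\eps]=0)>0$, it shows that $\Fbar(t-\eps)>0$ implies $\Fbar(t)>0$, by restricting to $\{M_1>\eps\}$, using that on this event only finitely many first-generation individuals have $S_u\le t$, and that for each such $u$ one has $\Fbar(t-S_u)\ge\Fbar(t-\eps)>0$ by monotonicity of $\Fbar$. Since $\Fbar(s)=1$ for $s\le 0$, iterating in steps of $\eps$ gives $\Fbar(t)>0$ for every $t>0$. You need this (or an equivalent one-step recursion using the branching property) to pass from ``positivity for small $t$'' to ``positivity for all $t$''; the single-event construction you propose cannot do it on its own.
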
 
\begin{proof} 
Since $\xi$ is locally finite, continuity from above implies that $\xi(t) \downarrow \xi(0)$ almost surely as $t \downarrow 0$. In particular, $\Prob(\xi(t) = 0) \to \Prob(\xi(0)=0) > 0$, 
where the strict inequality follows from \eqref{eq:critical} and $\Prob(\xi(0)=1) < 1$. 
Consequently, there exists an $\eps > 0$ such that 
\begin{equation}\label{eq:min-lower-bound} 
\Prob(\xi(\eps) = 0) > 0. 
\end{equation} 
Now let $t>0$ be such that $\Fbar(t - \eps) > 0$ and define $M_1 \defeq \min_{|u|=1} S_u$. We then have 
\begin{align*} 
\Fbar(t)\ \geq\ \E\bigg[ \prod_{|u|=1} \Fbar(t - S_u) \1\{M_1 > \eps\}\Bigg]. 
\end{align*} 
Since \eqref{eq:min-lower-bound} implies $\Prob(M_1 > \eps) > 0$, 
and since almost surely only finitely many $|u|=1$ satisfy $S_u \leq t$ due to $\xi$ being locally finite, the integrand on the right-hand side is positive with positive probability. Hence the same is true for the left-hand side. 
\end{proof} 
\begin{lemma} \label{lemma:generation-explosion} 
Assume that $\xi$ satisfies \eqref{eq:critical} and yields an explosive CMJ process. 
Let $t > 0$ be a continuity point of $\Fbar$, and define 
\begin{equation} \label{eq:generation-explosion} 
Z_n(t) \defeq \sum_{|u| = n}\1\{S_u \leq t\}. 
\end{equation} 
Then we have $Z_n(t) \to \infty$ almost surely on $\{T \leq t\}$. 
\end{lemma}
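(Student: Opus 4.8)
The statement asserts that on the explosion event $\{T \le t\}$, the number $Z_n(t)$ of $n$th-generation individuals born by time $t$ tends to infinity. The natural approach is a proof by contradiction combined with the branching structure, much in the spirit of the proof of Lemma \ref{lemma:M^phi_n}. The plan is as follows. Suppose, toward a contradiction, that $\liminf_{n} Z_n(t) < \infty$ with positive probability on $\{T \le t\}$; since $Z_n(t)$ is $\N_0$-valued, this means there is a (random) finite value attained infinitely often, and in particular the event $A \defeq \{T \le t\} \cap \{Z_n(t) \text{ does not tend to }\infty\}$ has positive probability. On $A$, infinitely many generations $n$ have at most some fixed finite number of individuals alive by time $t$.

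The key idea is that if generation $n$ has only finitely many individuals $u$ with $S_u \le t$, then explosion by time $t$ must be "inherited" by at least one of them: namely $\{T \le t\} = \bigcup_{|u|=n}\{S_u + T_u \le t\}$ up to the events where $S_u = \infty$, because $T = \min_{|u|=n}(S_u + T_u)$ (this is the $n$-step version of the one-step recursion $T = \min_{|u|=1}(S_u + T_u)$, obtained by iterating). If only finitely many $u$ in generation $n$ satisfy $S_u \le t$, then on $\{T \le t\}$ at least one such $u$ must itself satisfy $T_u \le t - S_u \le t$, i.e. the subtree rooted at $u$ explodes by time $t$ as well. So explosion by time $t$ propagates down a single ancestral line through infinitely many of the "small" generations. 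First I would make this precise: on $A$, choose along a subsequence of generations a nested sequence of individuals $u^{(1)} \prec u^{(2)} \prec \cdots$ (each an ancestor of the next) with $S_{u^{(k)}} \le t$ and $T_{u^{(k)}} \le t$ for all $k$; passing down this line, $S_{u^{(k)}}$ is nondecreasing and bounded by $t$, hence converges to some $s_\infty \le t$, while each subtree still explodes by time $t$.

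To derive the contradiction I would exploit Corollary \ref{cor:minimal fixed point} and the independence structure to show such an "infinite small line" has probability zero. Here is the mechanism I expect to use: condition on the ancestral line and the birth times along it; the events "$Z$ stays bounded in generation $n$ among descendants of $u^{(k)}$ but the subtree nonetheless explodes by time $t$" force, at each step, the chosen child to be selected from a bounded pool while an independent fresh explosive subtree is attached with displacement $\le t - S_{u^{(k)}} \to t - s_\infty \ge 0$. Since by Corollary \ref{cor:minimal fixed point} $\Fbar(r) < 1$, equivalently $\Prob(T_u > r)$ is bounded away from $1$ uniformly for $r$ in any interval bounded away from $0$... actually the cleaner route is: along the line, the "remaining budget" $t - S_{u^{(k)}}$ decreases to a limit, and at each generation with $S_{u^{(k)}} \le t$ one of its boundedly-many time-$\le t$ children must be chosen; a Borel--Cantelli / martingale argument then shows that requiring infinitely many such coincidences while the global explosion time stays $\le t$ has probability zero unless $Z_n(t) \to \infty$. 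I expect this last step — turning the heuristic "explosion cannot be carried by a single line through infinitely many thin generations" into a rigorous zero-probability statement — to be the main obstacle, and I would handle it by the same martingale device as in Lemma \ref{lemma:M^phi_n}: take $\phi = \Fbar$ in \eqref{eq:M^phi_n}, note $M^{\Fbar}_n(t) \to M^{\Fbar}_\infty(t) \ge \1\{T > t\}$ with $\E[M^{\Fbar}_\infty(t)] = \Fbar(t) < 1$ on $\{T \le t\}$; if $Z_n(t)$ stayed bounded along a subsequence while $T \le t$, then $M^{\Fbar}_n(t) = \prod_{|u|=n, S_u \le t} \Fbar(t-S_u)$ would be a bounded-below product of boundedly-many factors each $< 1$ but bounded away from $0$ (using $\Fbar > 0$ from Lemma \ref{lemma:F<1} together with $S_u \le t$ forcing $t - S_u$ into a compact set), so $\liminf_n M^{\Fbar}_n(t) > 0$, contradicting $M^{\Fbar}_\infty(t) \ge \1\{T>t\}$... no — on $\{T \le t\}$ the bound $M_\infty \ge \1\{T>t\}$ is vacuous. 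So the contradiction must instead come from the complementary side: I would show $M^{\Fbar}_n(t) \to \1\{T > t\}$ almost surely (not merely $\ge$), by arguing that on $\{T \le t\}$ the product picks up, infinitely often, a factor $\Fbar(t - S_u)$ with $t - S_u$ bounded away from $0$ along the explosive line — which drives the product to a value strictly below any positive threshold only if the number of such factors grows, i.e. $Z_n(t) \to \infty$. Assembling these pieces — the $n$-step recursion for $T$, the compactness of $\{t - S_u : S_u \le t\}$, Lemma \ref{lemma:F<1}, Corollary \ref{cor:minimal fixed point}, and the martingale convergence of $M^{\Fbar}_n(t)$ — yields the claim.
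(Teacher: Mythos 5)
You have correctly identified the right tool --- the bounded martingale $M_n^{\Fbar}(t) = \prod_{|u|=n}\Fbar(t-S_u)$ from Lemma \ref{lemma:M^phi_n} --- and the right mechanism: if $Z_n(t)$ stayed bounded along a subsequence, then $M_n^{\Fbar}(t)$ would be a product of boundedly many factors each at least $\Fbar(t) > 0$ (by Lemma \ref{lemma:F<1}), hence bounded away from zero, while it should tend to zero on $\{T \le t\}$. But precisely the step you flag as unresolved --- showing $M_\infty^{\Fbar}(t) = 0$ almost surely on $\{T \le t\}$ --- is the heart of the matter, and your proposal never closes it. You notice that Lemma \ref{lemma:M^phi_n} only gives $M_\infty^{\Fbar}(t) \ge \1\{T > t\}$, which is vacuous on $\{T \le t\}$, and then retreat to a vague appeal that the product ``picks up factors bounded away from $1$ infinitely often,'' which is circular: that is exactly what you are trying to prove.

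The missing ingredient is the hypothesis that $t$ is a continuity point of $\Fbar$ --- a hypothesis your argument never uses, which is a telltale sign. From $\E[M_\infty^{\Fbar}(t)] = \Fbar(t)$ and $M_\infty^{\Fbar}(t) = 1$ on $\{T > t\}$, split
\begin{equation*}
\Fbar(t) \,=\, \E\big[M_\infty^{\Fbar}(t)\1\{T>t\}\big] + \E\big[M_\infty^{\Fbar}(t)\1\{T\le t\}\big] \,=\, \Prob(T>t) + \E\big[M_\infty^{\Fbar}(t)\1\{T\le t\}\big].
\end{equation*}
Since $\Fbar$ is left-continuous, $\Fbar(t) = \Prob(T \ge t)$ in general; at a continuity point this equals $\Prob(T > t)$, and the displayed identity forces $\E[M_\infty^{\Fbar}(t)\1\{T\le t\}] = 0$, hence $M_\infty^{\Fbar}(t) = 0$ a.s.\ on $\{T \le t\}$. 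With this in hand, the paper finishes cleanly via the bound $\log(1-x) \ge -cx$ for $x \le F(t) < 1$, giving $M_n^{\Fbar}(t) \ge \exp(-c\sum_{|u|=n}F(t-S_u))$ and hence $\sum_{|u|=n}F(t-S_u) \to \infty$, which dominates $Z_n(t)$ from below after noting $F(t-S_u) \le \1\{S_u \le t\}$; your compactness argument would also work once $M_\infty^{\Fbar}(t) = 0$ is secured. Your preliminary detour through the $n$-step recursion for $T$ and nested ancestral lines is not needed and does not by itself produce a zero-probability statement.
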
 

\begin{proof} 
Let $(M_n^{\Fbar}(t))_{n \in \N_0}$ be the sequence defined in Lemma \ref{lemma:M^phi_n} with $\phi = \Fbar$. 
Then, as $n\to\infty$, $M_n^{\Fbar}(t)$ converges almost surely and in mean to some $M_\infty^{\Fbar}(t)$, which satisfies $\E[M_\infty^{\Fbar}(t)] = \Fbar(t)$ and $M^{\Fbar}_\infty(t) = 1$ on $\{T > t\}$ almost surely. 
Thus we obtain 
\begin{align*} 
\Fbar(t)\ &=\ \E[M_\infty^{\Fbar}(t)\1\{T > t\}] + \E[M_\infty^{\Fbar}(t)\1\{T \leq t\}] \\ 
&=\ \Prob(T > t) + \E[M_\infty^{\Fbar}(t)\1\{T \leq t\}]. 
\end{align*} 
Now if $t > 0$ is a continuity point of $\Fbar$, then $\Fbar(t) = \Prob(T > t)$, and we conclude that 
\begin{equation*} 
M_\infty^{\Fbar}(t) = 0 \quad \text{almost surely on } \{T \leq t\}. 
\end{equation*} 
Note that the assumption that $\xi$ yields an explosive CMJ process implies $\Prob(T \leq t) > 0$ by Corollary \ref{cor:minimal fixed point}. 
Since $F(t) < 1$ by Lemma \ref{lemma:F<1}, there exists $c \in (0, \infty)$ such that $\log(1 - x) \geq -cx$ for all ${x \leq F(t)}$. 
Thus, we have 
\begin{align*} 
M_n^{\Fbar}(t)\ &=\ \exp\bigg( \sum_{|u|=n}\log(1 - F(t - S_u)) \bigg) 
\geq\ \exp\bigg(-c \sum_{|u|=n} F(t - S_u)\bigg). 
\end{align*} 
Since $M_n^{\Fbar}(t) \to 0$ on $\{T \leq t\}$, it follows that 
\begin{equation*} 
\lim_{n \to \infty}\sum_{|u|=n} F(t - S_u) = \infty \quad \text{almost surely on }\{T \leq t\}. 
\end{equation*} 
Finally, since $F(t - S_u) \leq \1\{S_u \leq t\}$, the claim follows. 
\end{proof} 

We are now ready to prove Theorem \ref{thm:uniqueness}. 

\begin{proof}[Proof of Theorem \ref{thm:uniqueness}] 
We start by deducing assertion (a) from (b). 
Given a non-trivial fixed point $\phi \in \cM$ of $\Smooth$, we set $c \coloneqq \sup\{t > 0\,:\, \phi(t) = 1\}$ and $\phi^c(t) \coloneqq \phi(t + c)$, $t \in \R$. 
Then we have $\phi^c \in \cM$ and $\phi^c(t) < 1$ for all $t > 0$, so that assertion (b) implies $\Smooth^n \! \phi^c \to \Fbar$ as $n \to \infty$. 
Since $\Smooth \! \phi^c = \phi^c$ it follows $\Fbar(t) = \phi^c(t) = \phi(t + c)$ for all $t \in \R$. 

It remains to prove (b). 
Let $t > 0$ and $(M_n^{\phi}(t))_{n \in \N_0}$ be as defined in Lemma \ref{lemma:M^phi_n}. 
We know that $M_n^\phi(t) = 1$ eventually as $n \to \infty$ almost surely on $\{T > t\}$. 
If the CMJ process is not explosive, then $T = \infty$ and hence $M_n^\phi(t) \to 1$ almost surely. 
Thus, we have 
$$ \Smooth^n \! \phi(t)\ =\ \E[M_n^\phi(t)]\ \to\ 1\ =\ \Fbar(t) $$ 
by the dominated convergence theorem. 

\vspace{.1cm} 
It remains to consider the case where the CMJ process is explosive. 
For all $\eps > 0$ such that $t-\eps >0$ is a continuity point of $\Fbar$, we find 
\begin{equation*} 
M_n^\phi(t)\ \leq\ \prod_{\substack{|u|=n\\S_u \leq t-\eps}} \phi(t - S_u)\ \leq\ \phi(\eps)^{Z_n(t - \eps)}, 
\end{equation*} 
which on $\{T \leq t - \eps\}$ converges to zero by Lemma \ref{lemma:generation-explosion} and the assumption $\phi(t) < 1$ for all $t > 0$. 
Thus, $M_n^\phi(t)\to 0$ almost surely on $\{T \leq t - \eps\}$. 
Letting $\eps \downarrow 0$, we conclude that $M_n^\phi(t)\to 0$ almost surely on $\{T < t\}$. 
Therefore, for every continuity point $t > 0$ of $\Fbar$, we have shown that 
\begin{equation*} 
\lim_{n \to \infty} M_n^\phi(t) = \1\{T \geq t\} \text{ almost surely.} 
\end{equation*} 
Since $\Smooth^n \! \phi(t) = \E[M_n^\phi(t)]$, applying the dominated convergence, we deduce that $\Smooth^n \! \phi(t)\to\Fbar(t)$ for all continuity points $t > 0$ of $\Fbar$. 

\vspace{.1cm} 
To extend this to all $t > 0$ we adapt an argument from \cite{Grishechkin:1987}. 
Let $t > 0$ be arbitrary, and let $\eps > 0$ be such that $t - \eps >0$ is a continuity point of $\Fbar$. Define 
\begin{equation*} 
\phi^\eps(x)\ \defeq\ \begin{cases} 
\phi(x+\eps), & x > 0, \\ 
1, & x \leq 0. 
\end{cases} 
\end{equation*} 
Then we have $(\Smooth^n \! \phi^\eps)(t - \eps) \to \Fbar(t - \eps)$ as $n \to \infty$. 
Moreover, we note that $\phi(x) \leq \phi^{\eps}(x - \eps) \eqqcolon (\theta_\eps \phi^{\eps})(x)$ for all $x \in \R$ where $\theta_\eps$ denotes the appropriate translation. 
Thus, using that $\Smooth$ is order-preserving and commutes with translation, for sufficiently large $n$ we have 
\begin{equation*} 
(\Smooth^n \! \phi)(t) \leq (\Smooth^n (\theta_\eps\phi^\eps))(t) = (\Smooth^n \! \phi^{\eps})(t - \eps) < \Fbar(t - \eps) + \eps. 
\end{equation*} 
Additionally, from \cite[Lemma 2.1]{Komjathy:2016}, we know that $\Smooth^n\! \1_{(-\infty,0]} \to \Fbar$ as $n \to \infty$. Hence, for~sufficiently large $n$, we have 
\begin{equation*} 
\Smooth^n \! \phi(t)\ \geq\ \Smooth^n \! \1_{(-\infty,0]}(t)\ >\ \Fbar(t) - \eps, 
\end{equation*} 
Combining these results, we obtain that 
\begin{equation*} 
\eps\ >\ \Fbar(t) - \Smooth^n \! \phi(t)\ >\ \Fbar(t) - \Fbar(t - \eps) - \eps 
\end{equation*} 
for all sufficiently large $n$. Finally, by the left-continuity of $\Fbar$, and by letting $\eps \downarrow 0$, we conclude that $\Smooth^n \! \phi(t) \to \Fbar(t)$. 
This~completes the proof of (b), and hence the proof of Theorem \ref{thm:uniqueness}. 
\end{proof} 

In the remainder of this section we collect two comparison results for later use. The following lemma, originally proved in \cite{Komjathy:2016} using an argument based on the smoothing transform, now follows almost immediately as a corollary of Theorem~\ref{thm:uniqueness}. 
Recall that for a point process $\xi$ and $t \geq 0$, we write $\xi(t) = \xi[0,t]$. 

\begin{lemma}[{\cite[Claim 2.7]{Komjathy:2016}}]\label{lemma:T-comparison} 
Let $\xi$ and $\xi'$ be point processes with corresponding smoothing transforms $\Smooth$ and $\Smooth'$, respectively. 
Suppose that there exists $t_0 > 0$ such that 
\begin{equation*} 
\Smooth \! \phi(t)\,\geq\,\Smooth' \! \phi(t)\quad\text{for all }\phi \in \cM\text{ and }t\leq t_0. 
\end{equation*} 
Then, if $\xi$ yields an explosive CMJ process, so does $\xi'$. 
In this case, the corresponding~explosion times $T$ and $T'$ satisfy 
\begin{equation}\label{eq:explosion-time-domination} 
\Prob(T \geq t)\,\geq\,\Prob(T' \geq t)\quad \text{for all }t \leq t_0. 
\end{equation} 
\end{lemma}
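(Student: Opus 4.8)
The plan is to compare the minimal fixed points $\Fbar$ and $\Fbar'$ of $\Smooth$ and $\Smooth'$ directly, by iterating both operators from the common starting point $\1_{(-\infty,0]}$ and passing to the limit via the convergence $\Smooth^n \1_{(-\infty,0]} \to \Fbar$ recorded in \cite[Lemma 2.1]{Komjathy:2016} (and its analogue for $\Smooth'$). The crucial preliminary observation is a causality/localization property: for $t \le t_0$ the value $\Smooth \phi(t) = \E[\prod_{|u|=1}\phi(t-S_u)]$ depends on $\phi$ only through its values $\phi(s)$ with $s \le t_0$, because $S_u \ge 0$ almost surely forces $t - S_u \le t \le t_0$. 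The same holds for $\Smooth'$. This is exactly what will let the hypothesis, which constrains the operators only on $(-\infty,t_0]$, be used inside an iteration.

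Next I would prove by induction on $n$ that $\Smooth^n \1_{(-\infty,0]}(t) \ge (\Smooth')^n \1_{(-\infty,0]}(t)$ for all $t \le t_0$. The case $n = 0$ is an equality. For the inductive step, write $\psi_n = \Smooth^n \1_{(-\infty,0]}$ and $\psi_n' = (\Smooth')^n \1_{(-\infty,0]}$; then, for $t \le t_0$,
\[
\Smooth^{n+1} \1_{(-\infty,0]}(t) = \Smooth \psi_n(t) \ge \Smooth \psi_n'(t) \ge \Smooth' \psi_n'(t) = (\Smooth')^{n+1} \1_{(-\infty,0]}(t),
\]
where the first inequality uses order-preservation of $\Smooth$ together with the inductive hypothesis $\psi_n(s) \ge \psi_n'(s)$ for $s \le t_0$ — which is all that $\Smooth\psi_n(t)$ sees for $t \le t_0$, by the localization above — and the second inequality is the hypothesis $\Smooth \ge \Smooth'$. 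Letting $n \to \infty$ and invoking $\Smooth^n \1_{(-\infty,0]} \to \Fbar$, $(\Smooth')^n \1_{(-\infty,0]} \to \Fbar'$ from \cite[Lemma 2.1]{Komjathy:2016}, we obtain $\Fbar(t) \ge \Fbar'(t)$ for all $t \le t_0$.

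Both assertions of the lemma then follow at once. Since $\Fbar$ is the complement of the left-continuous distribution function of $T$, one has $\Fbar(t) = \Prob(T \ge t)$, and likewise $\Fbar'(t) = \Prob(T' \ge t)$; hence $\Fbar \ge \Fbar'$ on $(-\infty, t_0]$ is exactly \eqref{eq:explosion-time-domination}. If moreover $\xi$ yields an explosive branching process, then $\Fbar$ is non-trivial, so by Corollary \ref{cor:minimal fixed point} $\Fbar(t) < 1$ for every $t > 0$; choosing any $t \in (0, t_0]$ gives $\Fbar'(t) \le \Fbar(t) < 1$, so $\Fbar'$ is non-trivial, and Proposition \ref{prop:minimal-fixed-point} shows that $\xi'$ yields an explosive branching process.

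The only genuinely delicate point is the localization step and its use inside the induction: one must be sure that comparing the operators only on $(-\infty, t_0]$ still suffices to compare the globally defined functions $\Fbar$ and $\Fbar'$ on that interval. This is precisely where the assumption $S_u \ge 0$ enters — it makes $\Smooth$ \emph{causal}, so that the restriction of an iterate to $(-\infty, t_0]$ is computed from the restriction of the previous iterate to $(-\infty, t_0]$ alone, and the induction never needs information about $\phi$ beyond $t_0$. Everything else is routine monotonicity together with the cited convergence of the iterates.
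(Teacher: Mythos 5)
Your proof is correct, and since the paper cites this result from Komj\'athy \cite[Claim 2.7]{Komjathy:2016} rather than proving it, you are supplying a self-contained derivation. The route you take -- iterate $\Smooth$ and $\Smooth'$ from the common seed $\1_{(-\infty,0]}$, compare the iterates by induction, and pass to the limit using $\Smooth^n\1_{(-\infty,0]} \to \Fbar$ -- is clean and the key observations are all in place. The causality point is genuine and correctly handled: for $t \le t_0$ and $S_u \ge 0$ one has $t - S_u \le t_0$, and since every $\phi \in \cM$ is identically $1$ on $(-\infty,0]$ (it is non-increasing with $\phi(0)=1$ and bounded by $1$), the inductive hypothesis on $(-\infty,t_0]$ covers every argument the operator actually evaluates. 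The base case is an equality, and both inequalities in the inductive step (order-preservation of $\Smooth$, then the hypothesized pointwise domination $\Smooth \ge \Smooth'$ on $\cM \times (-\infty,t_0]$) are applied legitimately, noting $\psi_n' \in \cM$. Passing to the limit gives $\Fbar \ge \Fbar'$ on $(-\infty,t_0]$, and the translation $\Fbar(t) = \Prob(T \ge t)$ (since $F$ is the left-continuous distribution function) yields \eqref{eq:explosion-time-domination} directly. One tiny redundancy: once you have $\Fbar'(t) \le \Fbar(t) < 1$ for some $t \in (0,t_0]$ (using Corollary~\ref{cor:minimal fixed point}, which applies under the paper's standing assumption \eqref{eq:critical}), explosiveness of $\xi'$ follows immediately from $\Prob(T' < t) = 1 - \Fbar'(t) > 0$; the appeal to Proposition~\ref{prop:minimal-fixed-point} is not needed, though it is not wrong.
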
 

\begin{proof} 
We claim that, for all $n \in \N$, 
\begin{equation} \label{eq:comparison claim} 
\Smooth^n \! \phi(t) \geq (\Smooth')^n \phi(t) \quad \text{ for all } t \leq t_0 \text{ and } \phi \in \cM. 
\end{equation} 
Notice that this implies the assertion of the lemma 
by choosing $\phi = \1_{(-\infty,0]}$ and then passing to the limit 
using Theorem \ref{thm:uniqueness}(b). 
For \eqref{eq:comparison claim}, the case $n=1$ holds by assumption. 
Further, for any $n \in \N$, for which \eqref{eq:comparison claim} is true, 
if $\phi \in \cM$ and $t \leq t_0$, then 
\begin{equation*} 
\Smooth^{n+1} \! \phi(t) 
= \Smooth (\Smooth^n \! \phi)(t) \geq \Smooth' (\Smooth^n \! \phi)(t) \geq \Smooth' ((\Smooth')^n \phi)(t), 
= (\Smooth')^{n+1} \phi(t), 
\end{equation*} 
where the first inequality is obtained from \eqref{eq:comparison claim} by taking $n=1$ 
and replacing $\phi$ with $\Smooth^n \! \phi$ and the second inequality follows from the induction hypothesis. 
By induction, \eqref{eq:comparison claim} holds for all $n \in \N$. 
\end{proof} 

A simple consequence of this result is the following proposition, which generalizes \cite[Thm. 4]{Grey:1974}. For completeness, we include a proof. 

\begin{proposition}[{\cite[Thm. 3.7]{Komjathy:2016}}] \label{prop:comparison} 
Let $\xi$ and $\xi'$ be point processes such that $\xi'$ dominates $\xi$ at zero, i.e., there exists a coupling $(\tilde{\xi}, \tilde{\xi}')$ of $\xi$ and $\xi'$ such that for some $t_0 > 0$ and all $t \leq t_0$, we have $\tilde{\xi}(t) \leq \tilde{\xi}'(t)$ almost surely. 
If $\xi$ yields an explosive CMJ process, then so does $\xi'$. 
In this case, the respective explosion times $T$ and $T'$ satisfy \eqref{eq:explosion-time-domination}. 
\end{proposition}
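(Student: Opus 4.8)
The plan is to reduce the claim to Lemma~\ref{lemma:T-comparison}. That lemma turns everything into a one-sided comparison of smoothing transforms, so it suffices to show that the coupling hypothesis forces
\begin{equation*}
\Smooth \! \phi(t)\ \geq\ \Smooth' \! \phi(t)\qquad\text{for all }\phi\in\cM\text{ and all }t\leq t_0,
\end{equation*}
where $\Smooth$ and $\Smooth'$ denote the smoothing transforms of $\xi$ and $\xi'$. Granting this, Lemma~\ref{lemma:T-comparison} immediately yields both the explosiveness of $\xi'$ and the domination \eqref{eq:explosion-time-domination}, and nothing further is needed.

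To prove the inequality I would argue pathwise on the coupling $(\tilde\xi,\tilde\xi')$. Writing $\tilde\xi=\sum_j\delta_{\tilde X_j}$ and $\tilde\xi'=\sum_j\delta_{\tilde X'_j}$, and recalling that the first-generation birth times $S_u$ with $|u|=1$ are precisely the atoms of the ancestor's reproduction process, the definition \eqref{eq:ST} gives
\begin{equation*}
\Smooth \! \phi(t)=\E\Big[\textstyle\prod_j\phi(t-\tilde X_j)\Big],\qquad \Smooth' \! \phi(t)=\E\Big[\textstyle\prod_j\phi(t-\tilde X'_j)\Big].
\end{equation*}
Fix $t\leq t_0$ and $\phi\in\cM$. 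Since $\phi$ is $[0,1]$-valued, non-increasing and $\phi(0)=1$, one has $\phi(s)=1$ for every $s\leq 0$, so the factors coming from atoms in $[t,\infty)$ all equal $1$; in particular each product has only finitely many factors different from $1$, by local finiteness of $\tilde\xi$ and $\tilde\xi'$, and is well-defined. List the atoms of $\tilde\xi$ in $[0,t)$ as $\tilde X_{(1)}\leq\dots\leq\tilde X_{(m)}$ and those of $\tilde\xi'$ in $[0,t)$ as $\tilde X'_{(1)}\leq\dots\leq\tilde X'_{(m')}$. Since $\tilde\xi(s)\leq\tilde\xi'(s)$ for all $s\leq t_0$ and each $\tilde X_{(k)}$ lies in $[0,t)\subseteq[0,t_0]$, evaluating the inequality at $s=\tilde X_{(k)}$ yields $m\leq m'$ and $\tilde X'_{(k)}\leq\tilde X_{(k)}$ for every $k\leq m$ (the $k$-th smallest atom of $\tilde\xi'$ is no larger than the $k$-th smallest atom of $\tilde\xi$, because $\tilde X_{(k)}=\inf\{s:\tilde\xi(s)\geq k\}$). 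Monotonicity of $\phi$ then gives $\phi(t-\tilde X'_{(k)})\leq\phi(t-\tilde X_{(k)})$ for $k\leq m$; multiplying over $k\leq m$ and discarding the remaining factors $\phi(t-\tilde X'_{(k)})\in[0,1]$ with $m<k\leq m'$ produces
\begin{equation*}
\textstyle\prod_j\phi(t-\tilde X'_j)\ \leq\ \prod_j\phi(t-\tilde X_j)\qquad\text{almost surely},
\end{equation*}
and taking expectations gives $\Smooth' \! \phi(t)\leq\Smooth \! \phi(t)$.

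The argument is entirely elementary; the only slightly delicate point is the implication $\tilde\xi(s)\leq\tilde\xi'(s)\Rightarrow\tilde X'_{(k)}\leq\tilde X_{(k)}$, together with the bookkeeping that the atoms being compared stay inside the window $[0,t_0]$ on which domination is assumed. I would also record in passing that $\tilde\xi$ and $\tilde\xi'$ are almost surely locally finite, since they have the laws of $\xi$ and $\xi'$, which is what makes the displayed products honest finite products.
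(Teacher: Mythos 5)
Your argument is correct, and it follows the same overall structure as the paper's: reduce to Lemma~\ref{lemma:T-comparison} by showing that $\Smooth\!\phi(t)\geq\Smooth'\!\phi(t)$ pathwise on the coupling for all $\phi\in\cM$ and $t\leq t_0$, then apply that lemma. Where you differ is in how the pathwise inequality is established. You order the atoms of $\tilde\xi$ and $\tilde\xi'$ in $[0,t)$ and observe that $\tilde\xi(s)\leq\tilde\xi'(s)$ for $s\leq t_0$ forces the $k$-th smallest atom of $\tilde\xi'$ to precede that of $\tilde\xi$ (and forces at least as many such atoms), so each factor of one product dominates the corresponding factor of the other and the extra factors can only decrease the right-hand side. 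The paper instead takes logarithms, writes $\log\Smooth$ in terms of the convolution $\log\phi\ast\tilde\xi(t)$, and proves a small general lemma: for a bounded, left-continuous, monotone $f$ vanishing on $(-\infty,0]$ and measures $\nu,\nu'$ with $\nu[0,s]\leq\nu'[0,s]$ for $s\leq t_0$, the convolutions $f\ast\nu$ and $f\ast\nu'$ are ordered on $(-\infty,t_0]$, via commutativity of convolution and Fubini. The content is identical, and both handle the restriction to $t\leq t_0$ with the same care. Your atom-by-atom argument is more elementary and self-contained; the paper's abstract convolution lemma has the advantage that it is stated once for both monotonicity types and is reused verbatim (with $f=1-\phi$ increasing) in the proof of Proposition~\ref{prop:Poisson-comparison}.
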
 

\begin{proof} 
Let $f:\R \to \R$ be a bounded, left-continuous, increasing function with $f(t) = 0$ for all $t \leq 0$, and $\nu$, $\nu'$ 
be Borel measures on $[0,\infty)$ 
such that $\nu(t)\leq\nu'(t)$ for all $t \leq t_0$. 
Then there exists a Borel measure $\mu_f$ on $\R$ such that $\mu_f[a,b) = f(b) - f(a)$ for all $a<b$, and we obtain 
\begin{align*} 
f \ast \nu(t) &\defeq \int f(t - x) \nu(\d x) = (\mu_f \ast \nu)[0,t) \\ 
&= (\nu \ast \mu_f)[0,t)\ =\ \int \nu[0,t-x) \, \mu_f(\d x) \\ 
&\leq\ \int \nu'[0,t - x) \, \mu_f(\d x)\ =\ (\nu' \ast \mu_f)[0,t)\\ 
&= (\mu_f \ast \nu')[0,t) = f \ast \nu'(t) 
\end{align*} 
for all $t \leq t_0$. 
Conversely, if $f$ is decreasing, we obtain $f \ast \nu(t) \geq f \ast \nu'(t)$ for all $t \leq t_0$. 
Now let $(\tilde{\xi}, \tilde{\xi}')$ be a coupling of $\xi$ and $\xi'$ such that $\tilde{\xi}(t) \leq \tilde{\xi}'(t)$ almost surely, for some $t_0 > 0$ and all $t \leq t_0$. 
Then, for all $\phi \in \cM$ and $t \leq t_0$, we have 
\begin{align*} 
\Smooth \! \phi(t)\ &=\ \E\Big[\exp \Big(\int \log \phi(t - x) \, \xi(\dx) \Big)\Big]\ =\ \E\big[\exp\big(\log \phi \ast \tilde{\xi}(t)\big)\big] \\ 
&\geq\ \E\big[\exp\big(\log \phi \ast \tilde{\xi}'(t) \big) \big] = \Smooth' \! \phi(t). 
\end{align*} 
By Lemma \ref{lemma:T-comparison}, the result follows. 
\end{proof}
We provide a second proof of the proposition using a coupling argument.
\begin{proof}[Second proof of Proposition \ref{prop:comparison}.]
By Proposition \ref{prop:minimal-fixed-point} it suffices to prove \eqref{eq:explosion-time-domination}. 
From the Andersen-Jessen theorem we obtain a family $(\xi_u,\xi_u')_{u \in \mathcal{I}}$ of i.i.d.\ pairs on a common probability space such that $\xi_u$ has the same law as $\xi$, $\xi'_u$ has the same law as $\xi'$, and $\xi_u(t) \leq \xi_u'(t)$ almost surely for all $t \leq t_0$, for all $u \in \mathcal{I}$. 
Now we use these families to construct CMJ processes with reproduction $\xi$ and $\xi'$, respectively, as described in Section \ref{subsec:General branching process}.
Let $\cZ(t)$ and $\cZ'(t)$ denote the respective number of individuals that are born up to time $t$.
Then we have $\cZ(t) \leq \cZ'(t)$ almost surely for all $t \leq t_0$.
Therefore, with $T$ and $T'$ denoting the associated explosion times, it follows that $\{T < t \} \subseteq \{T' < t\}$ almost surely, which implies \eqref{eq:explosion-time-domination}.
\end{proof}
\section{Sufficient explosion criteria} 
\subsection{Comparison with a Galton-Watson process in varying environment -- Proof of Theorem \ref{Thm:sufficient-explosion-GWVE}} \label{subsec:sufficient-explosion-GWVE} 

The central idea behind the proof of Theorem \ref{Thm:sufficient-explosion-GWVE} is to construct a Galton-Watson process in varying environment that has the property that its survival implies the explosion of the original CMJ process. 
This idea has already been used in \cite{Jagers:1975} to construct an example of 
an explosive CMJ process satisfying \eqref{eq:critical} and \eqref{eq:finitemu_+}. 
To establish the survival of this process, we use the following result by Kersting \cite{Kersting:2020}: 

\begin{theorem}[{\cite[Theorem 1]{Kersting:2020}}]\label{thm:kersting} 
Let $(Y_n)_{n \in \N_0}$ be a sequence of $\N_0$-valued random variables and 
$(Z_n)_{n \in \N_0}$ be a Galton-Watson process in varying environment $(Y_n)_{n \in \N_0}$, i.e., the offspring distribution of an individual in the $n$-th generation is distributed like $Y_n$ for each $n \in \N_0$. 
Suppose that there exists $c \in (0,\infty)$ such that for all $n \in \N$ 
\begin{equation}\label{eq:Kersting-A} 
\E[Y_n^2 \1\{Y_n \geq 2\}]\ \leq\ c\,\E[Y_n\1\{Y_n \geq 2\}]\cdot\E[Y_n\,|\,Y_n\geq 1]. 
\end{equation} 
Then $(Z_n)_{n \in \N_0}$ survives with positive probability if and only if 
\begin{equation*} 
\sum_{n \in \N} \frac{\nu_n}{\mathsf{m}_{n-1}} < \infty\quad\text{and}\quad\lim_{n \to \infty} \mathsf{m}_n \in (0,\infty] \text{ exists,} 
\end{equation*} 
where $\mathsf{m}_n \defeq \E[Z_n] = \E[Y_1] \cdot \ldots \cdot \E[Y_n]$ and $\nu_n \defeq \E[Y_n(Y_n-1)]/\E[Y_n]^2$ for $n \in \N_0$. 
\end{theorem}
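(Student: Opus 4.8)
This is Kersting's theorem, so here is the strategy of its proof. The plan is to reduce everything to the asymptotics of the finite-generation survival probabilities and then run a ``harmonic coordinate'' linearisation of the iterated generating functions. Write $f_k(s) \defeq \E[s^{Y_k}]$ for the probability generating function of $Y_k$, with mean $m_k \defeq f_k'(1) = \E[Y_k]$, which one may assume lies in $(0,\infty)$ for every $k$ (if some $m_k$ vanishes the process dies in one step, and $m_k = \infty$ forces $\mathsf{m}_n = \infty$ eventually; both cases are trivial, and \eqref{eq:Kersting-A} then also forces $\E[Y_k^2]<\infty$, so $\nu_k<\infty$). For $0 \le k \le n$ put $q_{k,n} \defeq \Prob(Z_n = 0 \mid Z_k = 1)$, so that $q_{k-1,n} = f_k(q_{k,n})$ and $q_{n,n} = 0$. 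Since $\{Z_n = 0\}$ increases to the extinction event, the process survives with positive probability if and only if $a_{0,n} \defeq (1-q_{0,n})^{-1}$, which is non-decreasing in $n$, stays bounded; thus the task is to estimate $\sup_n a_{0,n}$.

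The second step is the linearisation. Setting
\[
R_k(s) \defeq \frac{1}{1-f_k(s)} - \frac{1}{m_k(1-s)}, \qquad s \in [0,1),
\]
convexity of $f_k$ (whose graph lies above its tangent at $1$) gives $R_k \ge 0$, and the recursion $q_{k-1,n} = f_k(q_{k,n})$ becomes the affine relation $a_{k-1,n} = m_k^{-1} a_{k,n} + R_k(q_{k,n})$. Telescoping from $k=n$ down to $k=0$ and using $a_{n,n}=1$ yields the exact identity
\[
a_{0,n} \;=\; \frac{1}{\mathsf{m}_n} \;+\; \sum_{k=1}^n \frac{R_k(q_{k,n})}{\mathsf{m}_{k-1}} .
\]
Everything then hinges on a uniform two-sided bound $c_1 \nu_k \le R_k(s) \le c_2 \nu_k$, valid for all $k \in \N$ and all $s \in [0,1)$, with $0 < c_1 \le c_2 < \infty$ depending only on the constant $c$ of \eqref{eq:Kersting-A}. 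To obtain it I would use the factorisation $1 - f_k(s) = (1-s) h_k(s)$ with $h_k(s) = \sum_j \Prob(Y_k=j)(1 + s + \dots + s^{j-1})$ non-decreasing and $h_k(1) = m_k$, which gives $R_k(s) = g_k(s)/(m_k h_k(s))$ with $g_k$ non-decreasing and $g_k(1) = \tfrac12 f_k''(1)$, hence $R_k(1^-) = \tfrac12 \nu_k$. Condition \eqref{eq:Kersting-A} is exactly what is needed to keep the ratio $\big(g_k(s)/g_k(1)\big)\big/\big(h_k(s)/h_k(1)\big)$ bounded away from $0$ and $\infty$ uniformly in $s$ and $k$ (at $s=0$ this ratio unwinds to the inequality $\E[Y_k(Y_k-1)\1\{Y_k\ge2\}]\lesssim \E[(Y_k-1)\1\{Y_k\ge2\}]\,\E[Y_k\mid Y_k\ge1]$, which is \eqref{eq:Kersting-A} up to constants), so that the limiting value $\tfrac12\nu_k$ governs $R_k$ over the whole interval. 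I expect this uniform comparison to be the main obstacle; it is the only place \eqref{eq:Kersting-A} is genuinely used, and for irregular offspring laws the ``nonlinear part'' of $f_k$ need not be comparable to $\nu_k$ across the full range of arguments, which is why the characterisation can fail without such an assumption.

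Granting the comparison, both implications follow from the telescoped identity. For sufficiency: if $\sum_k \nu_k/\mathsf{m}_{k-1} < \infty$ and $\mathsf{m}_n \to \mathsf{m}_\infty \in (0,\infty]$, then $a_{0,n} \le \mathsf{m}_n^{-1} + c_2 \sum_k \nu_k/\mathsf{m}_{k-1}$ is bounded, so the process survives with positive probability. For necessity: if it survives with positive probability, $a_{0,n}$ is bounded, and since every summand is nonnegative the identity forces $c_1 \sum_{k=1}^n \nu_k/\mathsf{m}_{k-1} \le a_{0,n} \le \mathrm{const}$, hence $\sum_k \nu_k/\mathsf{m}_{k-1} < \infty$; then, using $q_{k,n} \uparrow q_{k,\infty}$, continuity of $R_k$ on $[0,1]$, and dominated convergence against the summable majorant $c_2 \nu_k/\mathsf{m}_{k-1}$, the series in the identity converges, so that $\mathsf{m}_n^{-1} = a_{0,n} - \sum_{k=1}^n R_k(q_{k,n})/\mathsf{m}_{k-1}$ converges to a finite nonnegative limit, i.e.\ $\mathsf{m}_n$ converges in $(0,\infty]$. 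Finally I would record the elementary point that $\liminf_n \mathsf{m}_n = 0$ already forces $a_{0,n} \ge \mathsf{m}_n^{-1}$ to be unbounded along a subsequence, hence extinction; this clarifies why the correct hypothesis is the existence of the limit in $(0,\infty]$ rather than merely $\liminf_n \mathsf{m}_n > 0$.
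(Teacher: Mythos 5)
The paper does not prove Theorem~\ref{thm:kersting}; it is quoted directly from Kersting's 2020 paper (as the citation in the theorem header signals), so there is no internal proof to compare your sketch against. That said, your reconstruction is a faithful account of how Kersting actually argues. The linearisation via what you call $R_k$ (Kersting's ``shape function'' $\varphi_k$), the exact telescoped identity
\[
\frac{1}{1-q_{0,n}} = \frac{1}{\mathsf{m}_n} + \sum_{k=1}^n \frac{R_k(q_{k,n})}{\mathsf{m}_{k-1}},
\]
and the reduction of everything to the uniform two-sided estimate $c_1\nu_k \le R_k(s) \le c_2\nu_k$ are exactly the skeleton of his proof, and your treatment of necessity (first deducing summability of $\nu_k/\mathsf{m}_{k-1}$ from the lower bound and boundedness of $a_{0,n}$, then passing to the limit via dominated convergence to get convergence of $\mathsf{m}_n^{-1}$) also matches.

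The one place where your sketch is genuinely thinner than the actual argument is the uniform comparison itself, which you correctly flag as ``the main obstacle.'' Your computation that the ratio $\bigl(g_k(s)/g_k(1)\bigr)\big/\bigl(h_k(s)/h_k(1)\bigr)$ equals $1$ at $s=1$ and, at $s=0$, unwinds to (A) up to constants, is correct; but this by itself does not control the ratio for intermediate $s$. Both $g_k/g_k(1)$ and $h_k/h_k(1)$ are non-decreasing with the same endpoint, which gives no pointwise comparison. Kersting closes this gap with a separate structural argument (exploiting that the lower bound $R_k(s)\ge \tfrac12\nu_k$ is unconditional, and that (A) is equivalent to a bound relating $R_k(0)$ to $\nu_k$, together with an interpolation across $s$). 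So: right architecture, right identification of where (A) is used, but the uniformity-in-$s$ step still needs an argument rather than just its two endpoint evaluations.
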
 

Theorem \ref{Thm:sufficient-explosion-GWVE} follows almost directly from the next lemma. 
\begin{lemma} \label{Lem:explosion via GWPVE} 
Assume \eqref{eq:critical}, \eqref{eq:finitemu_+} and \eqref{eq:positivemu_+}, and that there exists $\eps > 0$ such that $\E[\xi(\varepsilon)^2]<\infty$. 
Further, suppose that there exists a decreasing, summable sequence $(a_j)_{j \in \N_0}$ of positive numbers satisfying 
\begin{align} \label{eq:explosion via GWPVE} 
\sum_{n=1}^\infty \prod_{j=1}^n \frac1{\mu(a_j)}\,<\,\infty. 
\end{align} 
Then $\xi$ yields an explosive CMJ process. 
Moreover, each of the following conditions is sufficient for \eqref{eq:explosion via GWPVE}: 
\begin{align} 
&\sum_{n=1}^\infty \exp\bigg(-\sum_{j=1}^n \mu_+(a_j) \bigg) \exp\bigg(\frac12\sum_{j=1}^n \mu_+(a_j)^2 \bigg)\,<\,\infty \label{eq:GWVE-sufficient-i}, \\ 
&\sum_{n=1}^\infty \exp\bigg(-\sum_{j=1}^n \mu_+(a_j) \bigg)\,<\,\infty\quad\text{and}\quad\sum_{j=1}^\infty \mu_+(a_j)^2\,<\,\infty \label{eq:GWVE-sufficient-ii}, \\ 
&\sum_{n=1}^\infty \exp\bigg(-\delta\sum_{j=1}^n \mu_+(a_j) \bigg)\,<\,\infty \text{ for some }\delta \in (0,1) \label{eq:GWVE-sufficient-iii}. 
\end{align} 
\end{lemma}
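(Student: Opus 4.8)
The plan is to embed a Galton--Watson process in varying environment (GWVE) into the CMJ tree so that its survival forces explosion, and to obtain survival from Kersting's criterion (Theorem~\ref{thm:kersting}). Put $b_\infty \defeq \sum_{j \ge 1} a_j < \infty$. I would call the ancestor \emph{selected} and, recursively, declare the selected children of a selected individual $u$ of generation $n-1$ to be those children $ui$ of $u$ with $X_{u,i} \le a_n$; there are $\xi_u(a_n)$ of them, which has the law of $\xi(a_n)$. By induction a selected individual of generation $n$ is born by time $a_1 + \cdots + a_n \le b_\infty$, and selected individuals of distinct generations are distinct, so on the event that every generation carries a selected individual we have $\cZ_{b_\infty} = \infty$, hence $T \le b_\infty < \infty$. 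Thus it suffices to show that the process $(Z_n)$ counting selected individuals per generation survives with positive probability. Since the $\xi_u$ are i.i.d.\ and $\{\xi_u : |u| = n\}$ is independent of $\F_n = \sigma(\xi_v : |v| < n)$, the sequence $(Z_n)$ is a GWVE whose offspring law at step $n$ is that of $\xi(a_n)$; in the notation of Theorem~\ref{thm:kersting} this makes $\mathsf{m}_n = \prod_{j=1}^n \mu(a_j)$ and $\nu_n = \E[\xi(a_n)(\xi(a_n)-1)]/\mu(a_n)^2$.

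Next I would reduce to a convenient sequence and check the two ``output'' conditions of Theorem~\ref{thm:kersting}. First, \eqref{eq:explosion via GWPVE} presupposes $\mu(a_j) < \infty$ for all $j$; moreover replacing $(a_j)_j$ by a tail $(a_{j_0+i})_{i\ge1}$ scales the left-hand side of \eqref{eq:explosion via GWPVE} by at most the finite factor $\prod_{j \le j_0}\mu(a_j)$ and still witnesses explosion of $\xi$ through the embedding above, so I may assume $a_j \le \eps$, hence $\E[\xi(a_j)^2] \le \E[\xi(\eps)^2] \eqdef C < \infty$, for every $j$. Since $\mu(a_j) = 1 + \mu_+(a_j) \ge 1$ by \eqref{eq:critical}, the sequence $(\mathsf{m}_n)$ is non-decreasing, so $\lim_n \mathsf{m}_n \in [1,\infty]$ exists. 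Also $\nu_n \le \E[\xi(a_n)^2] \le C$, so $\sum_{n\ge1}\nu_n/\mathsf{m}_{n-1} \le C\sum_{n\ge1}\prod_{j=1}^{n-1}\mu(a_j)^{-1} = C\bigl(1 + \sum_{n\ge1}\prod_{j=1}^{n}\mu(a_j)^{-1}\bigr) < \infty$ by \eqref{eq:explosion via GWPVE}. It remains to verify the structural hypothesis \eqref{eq:Kersting-A} uniformly in $n$.

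For \eqref{eq:Kersting-A}: the left-hand side is at most $\E[\xi(a_n)^2] \le C$, and on the right $\E[\xi(a_n)\mid\xi(a_n)\ge1] \ge 1$, while $n \mapsto \E[\xi(a_n)\1\{\xi(a_n)\ge2\}]$ is non-increasing (because $t\mapsto\xi(t)$ is non-decreasing, $(a_n)$ is non-increasing, and $x\mapsto x\1\{x\ge2\}$ is non-decreasing on $\N_0$) and decreases to $\E[\xi(0)\1\{\xi(0)\ge2\}]$ by monotone convergence. This limit is positive: $\E[\xi(0)] = \mu(0) = 1$ together with $\Prob(\xi(0)=1)<1$ forces $\Prob(\xi(0)\ge2)>0$, as $\xi(0)\in\{0,1\}$ a.s.\ would give $\E[\xi(0)] = \Prob(\xi(0)=1)<1$. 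Hence $\inf_n\E[\xi(a_n)\1\{\xi(a_n)\ge2\}] \eqdef q > 0$, and \eqref{eq:Kersting-A} holds with $c = C/q$. Theorem~\ref{thm:kersting} then gives survival of $(Z_n)$ with positive probability, i.e.\ explosion.

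For the ``moreover'' part I would just manipulate series. Writing $\mu(a_j) = 1 + \mu_+(a_j)$ and using $\log(1+x)\ge x - x^2/2$ for $x\ge0$ gives $\prod_{j=1}^n\mu(a_j)^{-1}\le\exp(-\sum_{j=1}^n\mu_+(a_j))\exp(\tfrac12\sum_{j=1}^n\mu_+(a_j)^2)$, so \eqref{eq:GWVE-sufficient-i} implies \eqref{eq:explosion via GWPVE}; if in addition $\sum_j\mu_+(a_j)^2<\infty$, the second factor is bounded in $n$, so \eqref{eq:GWVE-sufficient-ii} implies \eqref{eq:GWVE-sufficient-i}; and since $a_j\downarrow0$ forces $\mu_+(a_j)\to0$, for $\delta\in(0,1)$ we have $\log(1+\mu_+(a_j))\ge\delta\mu_+(a_j)$ for all large $j$, whence $\prod_{j=1}^n\mu(a_j)^{-1}\le C_\delta\exp(-\delta\sum_{j=1}^n\mu_+(a_j))$ for a finite $C_\delta$, so \eqref{eq:GWVE-sufficient-iii} implies \eqref{eq:explosion via GWPVE}. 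I expect the main obstacle to be the uniform-in-$n$ control required in the middle two paragraphs — ensuring \eqref{eq:Kersting-A} and $\sum_n\nu_n/\mathsf{m}_{n-1}<\infty$ hold simultaneously for every $n$ — which is precisely where $\E[\xi(\eps)^2]<\infty$, the reduction $a_j\le\eps$, and the fact that \eqref{eq:critical} together with $\Prob(\xi(0)=1)<1$ yields $\Prob(\xi(0)\ge2)>0$ are all used; the embedding itself and the three series estimates are routine.
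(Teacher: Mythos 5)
Your proof is correct and follows essentially the same route as the paper: embed a Galton--Watson process in varying environment by selecting the offspring born within $a_n$ of a selected generation-$(n-1)$ parent, invoke Kersting's survival criterion, and then reduce \eqref{eq:GWVE-sufficient-i}--\eqref{eq:GWVE-sufficient-iii} to \eqref{eq:explosion via GWPVE} via elementary bounds on $\log(1+x)$. If anything, you are a touch more careful than the paper in two places: you verify \eqref{eq:Kersting-A} uniformly in $n$ (rather than only asymptotically) by observing that $n\mapsto\E[\xi(a_n)\1\{\xi(a_n)\ge2\}]$ is non-increasing with a positive limit, and you spell out the tail-shift argument behind the ``without loss of generality $a_j\le\eps$'' reduction.
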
 

\begin{proof} 
Without loss of generality, we assume that $a_j \leq \varepsilon$ for every $j \in \N_0$. 

For $u \in \I$ with $|u|=j \in \N_0$, define $\eta_u \defeq \xi_u[0,a_j]$. 
Let $Y_j$ be a random variable with the same distribution as $\xi[0,a_j]$, for $j \in \N_0$. 
Consider the Galton-Watson process $(Z_n)_{n \in \N_0}$ in varying environment $(Y_n)_{n \in \N_0}$, where the number of offspring of individual $u$ is $\eta_u$. 
Then, using $(\cZ_t)_{t \geq 0}$ from \eqref{eq:Z-def} and $a = \sum_{j=1}^\infty a_j$, we clearly have $\cZ_a \geq \sum_{n=0}^\infty Z_n$. Therefore, on the set where $(Z_n)_{n \in \N_0}$ survives, the process $(\cZ_t)_{t \geq 0}$ explodes. 
Thus, the problem of explosion has been reduced to the problem of survival of a Galton-Watson process 
in varying environment. 
We apply Theorem \ref{thm:kersting} to establish survival. 
To begin, we verify that condition \eqref{eq:Kersting-A} holds and note first that 
$$ \E[Y_n^2 \1{\{Y_n \geq 2\}}]\ \leq\ \E[Y_n^2]\ \leq\ \E[\xi(\varepsilon)^2]\ <\ \infty. $$ 
Next, we have: 
\begin{equation} \label{eq:Kersting (A)} 
\E[Y_n \1{\{Y_n \geq 2\}}]\cdot\E[Y_n\, |\, Y_n \geq 1] 
\ =\ \E[Y_n \1{\{Y_n \geq 2\}}] \frac{\E[Y_n]}{\Prob(Y_n \geq 1)} \ge \E[Y_n \1{\{Y_n \geq 2\}}]. 
\end{equation} 
It remains to show that $\E[Y_n \1{\{Y_n \geq 2\}}]$ is bounded away from $0$ as $n \to \infty$. 
Indeed: 
\begin{align*} 
\E[Y_n \1{\{Y_n \geq 2\}}]\ =\ \E[Y_n] - \Prob(Y_n=1)\ =\ \mu(a_n) - \Prob(\xi(a_n) =1)\ \to\ 1-\Prob(\xi(0)=1)\ >\ 0 
\end{align*} 
by \eqref{eq:critical}. Thus, Theorem \ref{thm:kersting} applies. 
We note that $\E[Y_n] = \mu(a_n) \to 1$ as $n \to \infty$ and that 
\begin{align*} 
\nu_n\ =\ \frac{\E[Y_n(Y_n-1)]}{\E[Y_n]^2}\ =\ \frac{\E[Y_n^2]}{\E[Y_n]^2} - \frac{1}{\E[Y_n]}\ \leq\ \E[\xi(\varepsilon)^2]\ <\ \infty\quad\text{for each }n\in\N. 
\end{align*} 
Consequently, $(Z_n)_{n \in \N_0}$ survives with positive probability if 
\begin{equation*} 
\sum_{n=1}^\infty \frac1{\E[Y_1]\cdots\E[Y_n]}\ <\ \infty. 
\end{equation*} 
The claim now follows from \eqref{eq:explosion via GWPVE} and 
\begin{equation*} 
\sum_{n=1}^\infty \frac1{\E[Y_1]\cdots\E[Y_n]}\ =\ \sum_{n=1}^\infty \prod_{j=1}^n \frac1{\mu(a_j)}. 
\end{equation*} 
Finally, to show the sufficiency of \eqref{eq:GWVE-sufficient-i}, \eqref{eq:GWVE-sufficient-ii}, or \eqref{eq:GWVE-sufficient-iii} for \eqref{eq:explosion via GWPVE}, note that 
\begin{equation*} 
\prod_{j=1}^n \frac1{\mu(a_j)}\ =\ \exp\bigg(-\sum_{j=1}^n \log(1+\mu_+(a_j)) \bigg). 
\end{equation*} 
Thus \eqref{eq:GWVE-sufficient-i} follows from the inequality $\log(1+x) \geq x-\frac12 x^2$ for $x \geq 0$, \eqref{eq:GWVE-sufficient-ii} follows from \eqref{eq:GWVE-sufficient-i}, and \eqref{eq:GWVE-sufficient-iii} follows from the inequality $\log(1+x) \geq \delta x$ for sufficiently small $x \geq 0$ (where in the last case $\varepsilon > 0$ might have to be adjusted to make sure that $x = \mu_+(a_j)$ is sufficiently small). 
\end{proof} 
\begin{proof}[Proof of Theorem \ref{Thm:sufficient-explosion-GWVE}.] 
Choose $a_j \defeq 1/(j \log^{1+r} j)$ for some $0<r<\delta$ and apply the sufficient condition \eqref{eq:GWVE-sufficient-iii}. 
\end{proof} 
\subsection{Comparison with a Bellman-Harris process - Proof of Theorem \ref{Thm:sufficient-explosion-Amini}} \label{subsec:sufficient-explosion-Amini} 

For a random variable $X$, let $F_X = \Prob(X \leq \cdot)$ denote its distribution function and define the generalized inverse of $F_X$ as 
\begin{equation*} 
F_X^{-1}(y) = \inf\{x\,:\,F_X(x) \geq y\}. 
\end{equation*} 
The CMJ process with reproduction point process $\xi = Z\delta_W$ for independent $Z$ and $W$ is known in the literature as \emph{Bellman-Harris} or \emph{age-dependent} branching process. 
The proof of Theorem \ref{Thm:sufficient-explosion-Amini} builds on the characterization of explosion of Bellman-Harris processes with a certain type of heavy-tailed offspring distribution due to Amini et al. 
We will use the following reformulation of \cite[Thm 1.3]{Amini:2013} by Komj\'athy. 

\begin{theorem}[{\cite[Lemma 5.8]{Komjathy:2016}}] \label{thm:Komjathy 5.8} 
Let $Z$ and $W$ be independent random variables taking values in $\N_0$ and $[0,\infty)$, respectively. Suppose there exists $\delta > 0$ such that $F_Z$ satisfies 
\begin{equation}\label{eq:plump-power} 
\frac{1}{t^{1-\delta}}\ \leq\ 1-F_Z(t)\ \leq\ \frac{1}{t^\delta}\quad\text{for all sufficiently large } t > 0. 
\end{equation} 
Then the point process $\xi = Z\delta_W$ 
yields an explosive CMJ process if and only if there exists $C > 0$ such that 
\begin{equation*} 
\int_C^\infty F_W^{-1}\big(\e^{-y} \big) \frac{\d y}{y}\ <\ \infty, 
\end{equation*} 
or equivalently, if there exists $\eps \in (0,1)$ such that 
\begin{equation}\label{eq:Komjathy} 
\int_0^\eps F_W^{-1}(x) \frac{\d x}{x |\log x|}\ <\ \infty. 
\end{equation} 
\end{theorem}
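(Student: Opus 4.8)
The plan is to derive Theorem~\ref{thm:Komjathy 5.8} from the explosion criterion for heavy-tailed Bellman-Harris processes established by Amini et al.\ \cite{Amini:2013}, so that the work splits into (i) matching the hypotheses, (ii) reproducing (or simply citing) the characterization itself, and (iii) rewriting the resulting integral test in the two equivalent forms displayed in the statement.

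Step (iii) is the easy end, and I would dispatch it first: in $\int_C^\infty F_W^{-1}(\e^{-y})\,\d y/y$ substitute $x=\e^{-y}$, so that $y=|\log x|$ and $\d y=-\d x/x$, which turns the integral into $\int_0^{\e^{-C}}F_W^{-1}(x)\,\d x/(x|\log x|)$. Since $C\mapsto\e^{-C}$ is a bijection of $(0,\infty)$ onto $(0,1)$ and $F_W^{-1}$ is bounded on every interval $[\eps_0,\eps_1]\subset(0,1)$ (using $W<\infty$ a.s.), the quantifiers ``for some $C>0$'' and ``for some $\eps\in(0,1)$'' are interchangeable, giving the stated equivalence with \eqref{eq:Komjathy}.

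For (i)--(ii), the mechanism I would describe (and carry out, if a self-contained proof were required) is the \emph{greedy record ray}. Because $1-F_Z(t)$ is sandwiched between two powers of $t$ with exponents in $(\delta,1-\delta)\subset(0,1)$, the maximal offspring count among $m$ independent copies of $Z$ exceeds $m$ by a polynomial factor; following at each step the child with the most descendants therefore produces a ray along which the generation size $m_n$ grows doubly exponentially, $\log m_n$ being comparable to $c^n$ for some $c>1$. Among the $m_n$ children at such a step, the smallest displacement is of the order $F_W^{-1}(1/m_n)$, so the birth time of the $n$-th vertex on the ray is comparable to $\sum_{k\le n}F_W^{-1}(\e^{-c^k})$; a Cauchy--condensation comparison identifies finiteness of $\sum_k F_W^{-1}(\e^{-c^k})$ with convergence of $\int_C^\infty F_W^{-1}(\e^{-y})\,\d y/y$, and a Borel--Cantelli argument upgrades ``the ray has finite length'' to ``the process explodes with positive probability''. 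Conversely, when the integral diverges one truncates the offspring at a large level, applies the upper bound $1-F_Z(t)\le t^{-\delta}$ in a first-moment estimate for the number of generation-$n$ individuals born before a fixed time, and concludes $\min_{|u|=n}S_u\to\infty$ almost surely, i.e.\ $T=\infty$. The role of \eqref{eq:plump-power} is precisely that these record-size and first-moment estimates need only the two-sided polynomial bound rather than exact regular variation of $F_Z$, so one checks that \eqref{eq:plump-power} is the hypothesis under which \cite[Thm.\ 1.3]{Amini:2013} is stated.

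The main obstacle is the ``if'' half of (ii): constructing the greedy record ray and showing that a positive-probability family of such rays has summable edge lengths. This forces one to control the record offspring sizes and the minimal displacements \emph{jointly} along the ray and across the doubly-exponentially growing generations, which is the delicate part of \cite{Amini:2013} and the place where the precise form of \eqref{eq:plump-power} enters; the paper's own comparison tools (Proposition~\ref{prop:comparison}, Lemma~\ref{lemma:T-comparison}) yield only one-sided implications and cannot substitute for this argument. The ``only if'' half, together with (i) and (iii), is routine by comparison.
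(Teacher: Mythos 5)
The paper offers no proof of this theorem: it is stated as a direct citation of Komj\'athy's reformulation (Lemma~5.8 of the cited preprint) of Theorem~1.3 in Amini et al., which is exactly where your proposal ultimately defers the substance of (i)--(ii). Your change-of-variables $x=\e^{-y}$ correctly establishes the equivalence of the two integral conditions (the only step requiring verification beyond the citation), and your greedy-record-ray sketch is a faithful account of the mechanism in Amini et al., so the proposal matches the paper's treatment.
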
 

In the following, we extend the applicability of Theorem~\ref{thm:Komjathy 5.8} by relating the explosion behavior of certain CMJ processes to that of Bellman-Harris processes. 
\begin{lemma}\label{lemma:tree-coupling} 
Let $Z$ and $W$ be independent random variables with values in $\N_0$ and $(0,\infty)$, respectively. Let $W_n$, $n \in \N$ be i.i.d. copies of $W$. Define the point processes $\xi \coloneqq Z \delta_W$ and $\xi' \coloneqq \sum_{j=1}^Z \delta_{W_j}$. Then $\xi$ yields an explosive CMJ process if and only if $\xi'$ does. 
\end{lemma} 

\begin{proof} 
We construct a coupling of the two CMJ processes. 
Recall that a CMJ process can be constructed from an i.i.d.\ family of point processes $(\xi_u)_{u \in \I}$ 
where $\I$ is the set of labels for all potential individuals, see Section \ref{subsec:General branching process}. 
It thus suffices to define the point processes $(\xi_u)_{u \in \I}$ and $(\xi_u')_{u \in \I}$ on the same probability space such that $(\xi_u)_{u \in \I}$ and $(\xi_u')_{u \in \I}$ are i.i.d. families of copies of $\xi$ and $\xi'$, respectively, and the corresponding CMJ processes exhibit the same explosion behavior. 

Let $(Z_u)_{u \in \I}$ and $(W_u)_{u \in \I}$ be independent families of i.i.d.\ copies of $Z$ 
and $W$, respectively. 
For $u \in \I$, define 
\begin{align*} 
\xi_u \coloneqq Z_u \delta_{W_u}\qquad\text{and}\qquad 
\xi'_u \coloneqq \sum_{j=1}^{Z_u} \delta_{W_{uj}}. 
\end{align*} 
Then $(\xi_u)_{u \in \I}$ and $(\xi'_u)_{u \in \I}$ have the asserted distributions. 
Moreover, if $(S(u))_{u \in \I}$ and $(S'(u))_{u \in \I}$ denote the birth times in the respective CMJ processes, 
then an induction using \eqref{eq:S-def} yields 
\begin{equation*} 
S(uj) = W_\varnothing + S'(u),\quad u \in \I,\; j \in \N. 
\end{equation*} 
As a consequence, the respective minima $M_n \coloneqq \min_{|u|=n} S(u)$ 
and $M_n' \coloneqq \min_{|u|=n} S'(u)$ satisfy 
\begin{equation*} 
M_n = W_{\varnothing} + M_{n-1}',\quad n \in \N. 
\end{equation*} 
Therefore, in view of \eqref{eq:T-approx}, the respective explosion times $T$ and $T'$ are linked by $T = W_{\varnothing} + T'$ and the claim follows. 
\end{proof} 


Another comparison can be made with the process 
\begin{equation}\label{eq:xi''} 
\xi'' = Y\delta_0 + \delta_W 
\end{equation} 
where $Y$ and $W$ are independent, $\E[Y] = 1$, and $\Prob(Y = 1) < 1$. 
In the associated CMJ process the progeny of each individual generates an instantaneous critical Galton-Watson process at its birth time, with offspring distribution $Y$, which has almost surely finite total population $Y_\infty$, say. 
Each individual in this Galton-Watson process has exactly one sibling 
that receives an independent, positive displacement distributed like $W$. 
Thus, instead of $\xi''$, we might consider the reproduction point process $\xi = \sum_{j=1}^{Y_\infty} \delta_{W_j}$, where the $W_j$ for $j\in\N$ are i.i.d.\ with the same distribution as $W$, independent of $Y_\infty$. 
Using Theorem \ref{thm:Komjathy 5.8} and Lemma \ref{lemma:tree-coupling}, we can characterize the explosion of the CMJ processes associated with $\xi''$. 
\begin{lemma}\label{lemma:Ydelta_0+delta_W} 
Let $Y$ and $W$ be independent random variables taking values in $\N_0$ and $(0,\infty)$, respectively, such that $\E[Y] = 1$ and $\Prob(Y = 1) < 1$. 
Suppose further that the distribution of $Y$ belongs to the domain of attraction of the normal distribution or, more generally, a stable distribution with index $\alpha \in (1,2]$. 
Then the point process 
\begin{equation*} 
\xi\,=\,Y\delta_0 + \delta_W 
\end{equation*} 
yields an explosive CMJ process if and only if \eqref{eq:Komjathy} holds. 
\end{lemma}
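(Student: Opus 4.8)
The plan is to deduce the equivalence from Theorem~\ref{thm:Komjathy 5.8} via two successive explosion-preserving reductions, the second of which forces us to pin down the tail of the total progeny of a critical Galton--Watson process. First I would make rigorous the heuristic in the paragraph preceding the lemma. In the CMJ process driven by $\xi = Y\delta_0 + \delta_W$, call two individuals members of the same \emph{cluster} if they are linked by a chain of edges each of the form ``parent $\to$ one of its $Y$ instantaneous children''. All members of a cluster share one birth time; with respect to the instantaneous-child relation a cluster is a Galton--Watson tree whose offspring distribution is that of $Y$, which is critical and nondegenerate (as $\E[Y]=1$ and $\Prob(Y=1)<1$), hence a.s.\ finite, with size distributed as the total progeny $Y_\infty$ of a critical Galton--Watson process. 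Every member of a cluster emits exactly one delayed child---the root of the next cluster---at an independent displacement distributed like $W$. Therefore the induced cluster tree is itself a (locally finite) CMJ process with reproduction point process $\sum_{j=1}^{Y_\infty}\delta_{W_j}$, where the $W_j$ are i.i.d.\ copies of $W$ independent of $Y_\infty$ (the latter being a function of the $Y$-reproductions alone). Since each cluster is a.s.\ finite, the original process accumulates infinitely many births in $[0,t]$ if and only if the cluster process does; hence $\xi$ is explosive if and only if $\sum_{j=1}^{Y_\infty}\delta_{W_j}$ is, and the corresponding explosion times have the same law.

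Next I would invoke the coupling from the remark following Theorem~\ref{thm:Komjathy 5.8}, applied with $Z = Y_\infty$ (that argument never uses $\E[Z]<\infty$): the CMJ process driven by $\sum_{j=1}^{Y_\infty}\delta_{W_j}$ is explosive if and only if the Bellman--Harris process with reproduction $Y_\infty\delta_W$ is. Although $\E[Y_\infty]=\infty$, so that $Y_\infty\delta_W$ has infinite intensity, Theorem~\ref{thm:Komjathy 5.8} imposes no intensity condition; it requires only that $Y_\infty$ and $W$ be independent---which they are---together with the plump-power bound \eqref{eq:plump-power} for $F_{Y_\infty}$. So the whole argument comes down to verifying \eqref{eq:plump-power}.

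This is the crux. By the cycle lemma, $\Prob(Y_\infty = n) = \tfrac1n\,\Prob(S_n = -1)$, where $S_n = \sum_{i=1}^n (Y_i - 1)$ with $Y_i$ i.i.d.\ copies of $Y$; since $\E[Y-1]=0$ and, by hypothesis, $Y$ lies in the domain of attraction of the normal law or of an $\alpha$-stable law with $\alpha\in(1,2]$ (the normal case being $\alpha=2$), the random walk $(S_n)$ obeys a local limit theorem on its lattice, so that $\Prob(S_n=-1)\asymp b_n^{-1}$ along the relevant residue class, with $b_n$ regularly varying of index $1/\alpha$. Summing over $n$, $\Prob(Y_\infty > t)$ is regularly varying of index $-1/\alpha\in(-1,-\tfrac12]$; equivalently one may just cite the classical asymptotics $\Prob(Y_\infty > t)\sim c\,t^{-1/\alpha}\ell(t)$ for the total progeny of such a Galton--Watson process. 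Now fix $\delta\in(0,1-\tfrac1\alpha)$, which is possible since $1-\tfrac1\alpha\in(0,\tfrac12]$; then $\delta<\tfrac1\alpha$ and $1-\delta>\tfrac1\alpha$, so a regularly varying function of index $-1/\alpha$ is eventually trapped between $t^{-(1-\delta)}$ and $t^{-\delta}$. That is precisely \eqref{eq:plump-power} for $Z=Y_\infty$, and Theorem~\ref{thm:Komjathy 5.8} then yields that $Y_\infty\delta_W$ is explosive if and only if \eqref{eq:Komjathy} holds; combined with the two reductions, so is $\xi = Y\delta_0 + \delta_W$.

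I expect the only nontrivial point to be the tail estimate for $Y_\infty$: it needs a local limit theorem for the lattice walk $(S_n)$ valid throughout the domain of attraction (in particular in the possibly infinite-variance ``normal'' case, where the limiting stable density is still positive at $0$ because $\alpha>1$), and some care with the lattice span of $Y-1$, which affects only multiplicative constants. Everything else---the cluster decomposition and the passage to Bellman--Harris form---is bookkeeping built on the two remarks already recorded in the text.
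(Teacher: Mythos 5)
Your proposal is correct and follows essentially the same route as the paper: reduce $Y\delta_0 + \delta_W$ to the cluster process $\sum_{j=1}^{Y_\infty}\delta_{W_j}$, pass to the Bellman--Harris form via the coupling remark, and verify the plump-power condition for $Y_\infty$ using Dwass's formula (the cycle lemma), a local limit theorem in the domain of attraction, and Karamata's theorem to get the $-1/\alpha$-regularly varying tail. The only cosmetic difference is that the paper additionally records a second, fixed-point-equation argument for the first reduction, and it leaves the $\sum_{j=1}^{Y_\infty}\delta_{W_j}\leftrightarrow Y_\infty\delta_W$ step implicit while you make it explicit.
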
 
\begin{proof} 
As described above, we can replace $\xi$ by the process $\xi'' = \sum_{j=1}^{Y_\infty} \delta_{W_j}$, where $Y_\infty$ is the total population of a critical Galton-Watson tree with offspring distribution $Y$, taken to be independent of $(W_j)_{j \in \N}$. 
We now provide an alternative argument based on the fixed-point equation 
for the fact that $\xi$ yields an explosive CMJ process if and only if $\xi''$ does. 
For $\xi'' = \sum_{j=1}^{Y_\infty} \delta_{W_j}$, the equation $\phi = \Smooth\! \phi$ for $\phi \in \cM$ takes the form 
\begin{equation}\label{eq:fixedpoint-xi''} 
\phi(t)\,=\,h(\E[\phi(t - W)]),\quad t \geq 0 
\end{equation} 
with $h(x) \defeq \E[x^{Y_\infty}]$. 
For $\xi = Y \delta_0 + \delta_W$, the corresponding equation becomes 
\begin{equation}\label{eq:fixedpoint-xi} 
\phi(t)\,=\, f(\phi(t)) \E[\phi(t-W)],\quad t \geq 0 
\end{equation} 
where $f(x) \defeq \E[x^Y]$. 
Classical theory for critical Galton-Watson processes gives the relationship 
\begin{equation*} 
h^{-1}(x)\,=\,\frac{x}{f(x)},\quad x \in [0,1] 
\end{equation*} 
for the inverse function $h^{-1}$ of $h$, 
which shows that the equations \eqref{eq:fixedpoint-xi''} and \eqref{eq:fixedpoint-xi} are equivalent. 
Since $\Fbar$ is the minimal solution of this equation, $\xi$ yields an explosive CMJ process if and only if $\xi''$ does. 

By Theorem \ref{thm:Komjathy 5.8} and Lemma \ref{lemma:tree-coupling}, the condition \eqref{eq:Komjathy} characterizes explosion of the CMJ process with reproduction point process $\xi''$ if we show that the distribution of $Y_\infty$ satisfies \eqref{eq:plump-power}. 
We apply a result by Dwass (\cite{Dwass:1969}, see also \cite[p.~104f]{Kolchin:1986} for a simple proof), which states that 
\begin{equation*} 
\Prob(Y_\infty = n) = \frac1{n}\Prob(S_n = n-1) \quad \text{ for all } n \in \N 
\end{equation*} 
where $(S_n)_{n \in \N_0}$ has i.i.d.~increments with $S_0=0$ and $S_1 \sim Y$. 
Since $Y$ belongs to the domain of attraction of a stable law with index $\alpha \in (1,2]$, there exists a sequence $(a_n)_{n \in \N}$ of the form 
\begin{equation*} 
a_n = n^{1/\alpha} \ell(n) \quad \text{ as }n \to \infty 
\end{equation*} 
for some slowly varying function $\ell$ such that $a_n^{-1}(S_n - n)$ converges in distribution to the corresponding stable law as $n \to \infty$, see e.g.\ \cite[IX.8]{Feller:1971}. 
Using Stones's local limit theorem \cite[Thm.\ 1]{Stone:1967}, we obtain 
\begin{equation*} 
\Prob(S_n = n-1) \sim \frac{c}{a_n} \quad \text{ as }n \to \infty 
\end{equation*} 
for some constant $c \in (0,\infty)$. 
Summation gives that the tail of $Y_\infty$ is regularly varying with index $-1/\alpha$. 
In particular, the distribution of $Y_\infty$ satisfies \eqref{eq:plump-power}, due to $\alpha \in (1,2]$. 
Hence, condition \eqref{eq:Komjathy} characterizes explosion. 
\end{proof} 
\begin{proof}[Proof of Theorem \ref{Thm:sufficient-explosion-Amini}] 
Let $\xi$ be a point process $\xi$ as stated in the theorem, and define 
\begin{equation*} 
W\,\defeq\,\inf\{x > 0\,:\, \xi(0,x] > 0\}. 
\end{equation*} 
Observing that $\xi$ dominates $\xi' = \xi(0)\delta_0 + \delta_W$ at zero, it follows from Proposition \ref{prop:comparison} 
that, if $\xi'$ yields an explosive CMJ process, then so does $\xi$. 
The distribution of $W$ is given by 
\begin{equation*} 
F_W(t)\,=\,\Prob(\xi(0,t] > 0),\quad t \geq 0. 
\end{equation*} 
Next, let $t_0, \delta > 0$ be such that $\E[\xi(0,t_0]^{1+\delta}]< \infty$. Applying H\"older's inequality to $\E[\xi(0,t]]$ with $p=1+\delta$ and $q = (1+\delta)/\delta$, we obtain 
\begin{equation*} 
\Prob(\xi(0,t] > 0)\ \geq\ \bigg(\frac{\E[\xi(0,t]]}{\E[\xi(0,t]^{1+\delta}]^{1/(1+\delta)}}\bigg)^q\ \geq\ \frac{\mu_+(t)^q}{\E[\xi(0,t_0]^{1+\delta}]^{1/\delta}}\ \eqdef\ \frac{\mu_+(t)^q}{c} 
\end{equation*} 
for all $t \leq t_0$. 
For sufficiently small $y$ we now set $t \defeq \mu_+^{-1}((cy)^{1/q})$, which gives 
\begin{equation*} 
F_W(t) \geq \frac1c \mu_+\big(\mu_+^{-1}\big((cy)^{1/q}\big)\big)^q \geq y. 
\end{equation*} 
Now, since $F_W^{-1}(y) \leq t$ holds if and only if $y \leq F_W(t) = \Prob(\xi(0,t] > 0)$, we infer that 
\begin{equation*} 
F_W^{-1}(y)\ \leq\ \mu_+^{-1} \big( (cy)^{1/q} \big) 
\end{equation*} 
for some constant $c \in (0,\infty)$ and all sufficiently small $y$. 
Finally, by combining this with Lemma \ref{lemma:Ydelta_0+delta_W}, the assertion follows. 
\end{proof} 

\section{Explosion of CMJ processes with Poisson reproduction} 

\subsection{Proof of Theorem \ref{Thm:explosion-Poisson}} \label{subsec:explosion-poisson} 
Let $\xi$ be a Poisson point process satisfying \eqref{eq:critical}, \eqref{eq:finitemu_+} and \eqref{eq:positivemu_+}. 
Additionally, we impose that the cumulative distribution function $\mu_+$ is convex in a neighborhood of zero, and note that $\mu_+$ is continuous in that neighborhood. 
Using the Laplace functional 
\begin{equation*} 
\cL_\xi(u)\ \defeq\ \E\bigg[\exp\bigg( -\int u(x) \, \xi(\d x) \bigg)\bigg] 
\end{equation*} 
for measurable $u:[0,\infty) \to [0,\infty)$, we can express the smoothing transform \eqref{eq:ST} as 
\begin{equation*} 
\Smooth \! \phi(t) = \cL_\xi(-\log \phi(t - \cdot)),\quad t \geq 0,\ \phi \in \cM. 
\end{equation*} 
If $\xi$ is a Poisson point process with intensity measure $\mu$, the Laplace functional takes the form 
\begin{equation*} 
\cL_\xi(u)\ =\ \exp\Big( -\int \big(1 - \e^{-u(x)} \big) \, \mu(\d x) \Big), 
\end{equation*} 
so the smoothing transform becomes 
\begin{equation}\label{eq:poisson-smoothing} 
\Smooth \! \phi(t)\ =\ \exp\Big(- \int \big(1 - \phi(t - x) \big) \, \mu(\d x) \Big)\ =\ \exp\big(-(1-\phi) \ast \mu(t) \big),\quad t \geq 0. 
\end{equation} 
In particular, the distribution function $F$ of the explosion time satisfies 
\begin{equation*}
F\,=\,1 - \e^{-F \ast \mu}. 
\end{equation*} 
The proof of Theorem \ref{Thm:explosion-Poisson} follows the steps outlined in \cite{Grishechkin:1987}: 
First, we approximate the distribution function $F$ by a suitable piecewise constant function $\psi$ (see Lemma \ref{lemma:psi-comparison}). 
This approximation allows us to bound the integral in \eqref{eq:explosion-integral-cond} for some $\eps$ depending on $F$ (see Lemma \ref{lemma:int-bound}). 
Finally, we approximate $\mu_+$ from above by functions $\mu_n$, for which \eqref{eq:explosion-integral-cond} holds, and using the previously obtained bound, we conclude that the condition also holds for $\mu_+$. 
\begin{lemma}\label{lemma:psi-comparison} 
Fix $\delta \in (0,1)$. Suppose that $\mu_+:[0,\infty)\to [0,\infty)$ is a non-constant convex function that satisfies $\mu_+(0) = 0$ and the condition 
\begin{equation}\label{eq:sum-condition} 
\sum_{n \in \N} \frac{\mu_+^{-1}(\delta^n)}{n}\ <\ \infty. 
\end{equation} 
Define $a_0 \defeq \infty$, $a_n \defeq \sum_{k \geq n} \mu_+^{-1}(\delta^k)/k$ for $n \in \N$, and the function $\psi$ by 
\begin{equation*} 
\psi(t)\ \defeq\ \sum_{n \in \N_0} \delta^n \1_{(a_{n+1},a_n]}(t),\quad t \geq 0. 
\end{equation*} 
Then 
$$ F(t)\,\leq\,\psi(4t)\quad\text{for all }t\,\leq\,t_0\,\defeq \frac{\delta}{4(2 + \delta)}\,\mu_+^{-1}(1). $$ 
\end{lemma}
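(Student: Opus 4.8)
The plan is to realize $\phi\defeq 1-\psi(4\,\cdot\,)$ as a ``subsolution'' of the smoothing transform on $[0,t_0]$ and then let the iterates $\Smooth^n\phi$ converge to $\Fbar$ by Theorem~\ref{thm:uniqueness}(b). First one notes that $\phi\in\cM$: $\psi$ is non-decreasing and left-continuous with $\psi(0)=0$ and $0\le\psi\le 1$, so $\phi$ is non-increasing, left-continuous, $[0,1]$-valued with $\phi(0)=1$; and since $\psi(s)>0$ for every $s>0$, we have $\phi(t)<1$ for all $t>0$, so Theorem~\ref{thm:uniqueness}(b) applies to $\phi$. The constant $t_0$ is tailored to two elementary consequences of $\mu_+$ being convex with $\mu_+(0)=0$: concavity of $\mu_+^{-1}$ gives $a_1\ge\mu_+^{-1}(\delta)\ge\delta\,\mu_+^{-1}(1)>\tfrac{\delta}{2+\delta}\,\mu_+^{-1}(1)=4t_0$, so $4t\le a_1$, hence $\psi(4t)<1$ and $\phi(t)>0$, for every $t\le t_0$; and $\mu_+(\lambda\,\mu_+^{-1}(1))\le\lambda\,\mu_+(\mu_+^{-1}(1))=\lambda$ for $\lambda\in[0,1]$, so $\mu_+(t_0)\le\tfrac{\delta}{4(2+\delta)}<1$.

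Since $\xi$ is Poisson, \eqref{eq:poisson-smoothing} gives $\Smooth\phi(t)=\exp(-(1-\phi)\ast\mu(t))$, and because $\mu=\delta_0+\mu_+$ with $\mu\{0\}=1$ one has $(1-\phi)\ast\mu(t)=(1-\phi(t))+\int_{(0,t)}(1-\phi(t-u))\,\mu_+(\d u)$; hence, at points where $\phi(t)>0$, the inequality $\Smooth\phi(t)\ge\phi(t)$ is equivalent to
\begin{equation*}
\int_{(0,t)}\psi\big(4(t-u)\big)\,\mu_+(\d u)\ \le\ r\big(\psi(4t)\big),\qquad r(p)\defeq-\log(1-p)-p\ \ge\ \tfrac12 p^2 .
\end{equation*}
Verifying this for every $t\in(0,t_0]$ is the heart of the matter, and the only genuinely computational step. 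Writing $\psi(4t)=\delta^n$, i.e.\ $a_{n+1}<4t\le a_n$, one expands the left side along the level sets of $\psi$: it equals $\sum_{m\ge n}\delta^m\big[\mu_+((t-\tfrac14 a_{m+1})^+)-\mu_+((t-\tfrac14 a_m)^+)\big]$, and after Abel summation this is $\tfrac{1-\delta}{\delta}\sum_{m\ge n+1}\delta^m\,\mu_+\big((t-\tfrac14 a_m)^+\big)$. The point of the definition $a_m-a_{m+1}=\mu_+^{-1}(\delta^m)/m$ together with the convexity of $\mu_+$ is that $\mu_+\big((t-\tfrac14 a_m)^+\big)\le\mu_+\big(\tfrac14(a_n-a_m)\big)\le\mu_+\big(\tfrac14\mu_+^{-1}(\delta^n)(1+\log(m/n))\big)$, which for $m$ not too large compared with $n$ is at most a small multiple of $\delta^n(1+\log(m/n))$ by convexity, while for large $m$ the factor $\delta^m$ together with $\mu_+((t-\tfrac14 a_m)^+)\le\mu_+(t_0)\le\tfrac{\delta}{4(2+\delta)}$ makes the remaining tail negligible. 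Summing the geometric series, the left side is bounded by a quantity of the form $\big(\tfrac14+\tfrac{c}{n(1-\delta)}\big)\delta^{2n}$ plus lower-order terms; the constant $\tfrac14$ in $\psi(4\,\cdot\,)$ and the constant $\tfrac{\delta}{4(2+\delta)}$ in $t_0$ are exactly what make this fit below $r(\delta^n)\ge\tfrac12\delta^{2n}$ for every $n$ that can actually occur for $t\le t_0$ (for $\delta$ close to $1$ the requirement $a_{n+1}<4t_0$ already forces such $n$ to be large, which is what keeps the $\tfrac{c}{n(1-\delta)}$ term under control). Making this bookkeeping of the geometric level sets against the gaps $a_m-a_{m+1}$ work out with these explicit constants is the main obstacle.

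Granting the key estimate, $\Smooth\phi\ge\phi$ on $[0,t_0]$: the excluded case $\phi(t)=0$ does not arise there, and $\Smooth\phi(s)=1=\phi(s)$ for $s\le 0$ because $\Smooth$ maps $\cM$ into $\cM$, so $\Smooth\phi\ge\phi$ on all of $(-\infty,t_0]$. Since $\Smooth$ is order-preserving and, for $t\le t_0$, every argument $t-S_u$ lies in $(-\infty,t_0]$, an induction on $n$ gives $\Smooth^{n}\phi\le\Smooth^{n+1}\phi$ on $[0,t_0]$ for all $n\ge0$; thus $(\Smooth^n\phi(t))_{n\ge0}$ is non-decreasing with $\phi(t)=\Smooth^0\phi(t)$ as its first term. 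By Theorem~\ref{thm:uniqueness}(b) this sequence converges to $\Fbar(t)$, whence $\phi(t)\le\Fbar(t)$, that is $F(t)\le\psi(4t)$, for every $t\le t_0$.
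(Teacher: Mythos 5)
Your framework is essentially the paper's: you set $\phi=1-\psi(4\cdot)$, aim to show $\Smooth^n\phi\ge\phi$ on $(-\infty,t_0]$, and invoke Theorem~\ref{thm:uniqueness}(b) to conclude $\phi\le\Fbar$. The preliminary observations ($\phi\in\cM$, $4t_0<a_1$ so $\phi>0$ on $[0,t_0]$, $\mu_+(t_0)\le\tfrac{\delta}{4(2+\delta)}$) are correct, the reformulation $\Smooth\phi(t)\ge\phi(t)\iff\int_{(0,t)}\psi(4(t-u))\,\mu_+(\d u)\le r(\psi(4t))$ is right, and the reduction by monotonicity of $\psi$ to checking at the right endpoints $4t=a_k$ is exactly the move the paper makes ($\psi_{n+1}(t)\le\psi_{n+1}(a_k)$). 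Your expansion along level sets and the Abel summation to $\tfrac{1-\delta}{\delta}\sum_{m\ge n+1}\delta^m\mu_+((t-\tfrac14 a_m)^+)$ also agrees with the paper's $I_k$.

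But the key inequality $I_k\le r(\delta^k)$ is only sketched, not proved, and you say so yourself (``Making this bookkeeping\ldots{} work out with these explicit constants is the main obstacle''). That sentence marks a genuine gap: the single computational step that the whole proof rests on is missing. Moreover, the route you sketch — bound $\mu_+(\tfrac14(a_n-a_m))$ by term-by-term convexity for $m$ not too large, sum the geometric series, and absorb the large-$m$ tail crudely — produces an extra factor of order $\tfrac{1}{n(1-\delta)}$ which you then have to control by arguing that the constraint $a_{n+1}<4t_0$ forces $n$ large when $\delta$ is close to $1$; this does happen to work out numerically, but it is precisely the kind of delicate $\mu_+$-dependent bookkeeping that you have not carried out, and it is not obviously robust. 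The paper avoids this entirely by a cleaner probabilistic device: it rewrites $I_k=\delta^k\,\E\bigl[\mu_+\bigl(\tfrac14\sum_{l=0}^{X}\mu_+^{-1}(\delta^{k+l})/(k+l)\bigr)\bigr]$ for a geometric $X$, splits on $\{Y_k\le1\}$ versus $\{Y_k>1\}$ where $Y_k=\sum_{l=0}^X 1/(k+l)$, applies Jensen's inequality for the convex $\mu_+$ on the first event to get the contribution $\le\delta^k$, and on the second event uses a preliminary bound $\mu_+(\cdot)\le1$ together with $\Prob(Y_k>1)\le\Prob(X>k-1)=\delta^k$; this yields $\E[\cdots]\le2\delta^k$, hence $I_k\le\tfrac12\delta^{2k}\le r(\delta^k)$ uniformly in $k\ge k_0\vee 1$, with no spurious $\tfrac1{n(1-\delta)}$ factor. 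You would need to reproduce an argument of that strength to close the gap.
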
 

\begin{proof} 
Since $\mu_+$ is convex and non-constant, it is continuous and strictly increasing and hence 
is the cumulative mass function of a continuous measure on $[0,\infty)$. 
Moreover, $\mu_+$ is a bijection of $[0,\infty)$, 
and its generalized inverse $\mu_+^{-1}$ is simply the inverse function. 
Let $\psi_0 = \psi$, and for $n \in \N$, define $\psi_{n}$ recursively by 
\begin{align*} 
\psi_{n+1}(t)\ &\defeq\ 1 - \exp\bigg( - \psi_{n}(t) - \int_0^{t/4} \psi_{n}(t - 4x) \, \mu_+(\d x) \bigg), 
\quad t \geq 0. 
\end{align*} 
Denote by $\Smooth$ the smoothing transform associated with the Poisson point process with intensity measure $\mu=\delta_0+\mu_+$ as given by \eqref{eq:ST} 
and recall its representation \eqref{eq:poisson-smoothing}. By induction, we observe that 
\begin{equation*} 
1 - \psi_n(4t)\ =\ \Smooth^n\big[1 - \psi(4(\cdot)) \big](t), 
\quad t \geq 0. 
\end{equation*} 
In particular, $1 - \psi_n(4(\cdot)) \in \cM$ due to $\Smooth$ being a self-map on $\cM$. 
By Theorem \ref{thm:uniqueness}(b), we infer that $\psi_n(4t) \to F(t)$ as $n \to \infty$ for all $t > 0$. 

\vspace{.1cm} 
To complete the proof, it remains to show that $\psi_n(t) \leq \psi(t)$ holds for all $t \leq 4 t_0$ and $n \in \N_0$. 
We will prove the slightly stronger statement $\psi_n(t) \leq \psi(t)$ for all $t \leq a_{k_0}$ via induction on $n$, where 
$k_0 \in \N_0$ is chosen such that 
\begin{equation}\label{eq:k_0} 
a_{k_0+1}\,<\,4 t_0\,\leq\,a_{k_0}. 
\end{equation} 
For $n = 0$, the claim is trivially true (base case). 
For the inductive step, assume $\psi_{n}(t)\le\psi(t)$ for all $t \leq a_{k_0}$ and some $n \in \N_0$. We will show that $\psi_{n+1}(t) \leq \psi(t)$ for all $t \in (a_{k+1},a_k]$ and $k\geq k_0$. 
If $k_0 = 0$, for $t > a_1$ we clearly have $\psi(t) = 1 \geq \psi_{n+1}(t)$. 
Thus we can assume $k \geq k_0 \vee 1$. 
Let $t \in (a_{k+1},a_k]$ for $k \geq k_0 \vee 1$. 
Then we have 
\begin{align*} 
\psi_{n+1}(t)\ \leq\ \psi_{n+1}(a_{k}) 
\ &=\ 1 - \exp\bigg(-\psi_n(a_{k}) - \int_0^{a_k/4} \! \psi_n(a_k-4x) \, \mu_+(\d x) \bigg) \\ 
&\leq\ 1 - \exp\bigg(-\psi(a_{k}) - \int_0^{a_k/4} \! \psi(a_k-4x) \, \mu_+(\d x) \bigg) \\ 
&=\ 1 - \exp\big(-(\psi(t) + I_k)\big) 
\end{align*} 
where $I_k$ represents the integral in the exponent. 
To estimate $I_{k}$, we first note that 
\begin{align*} 
I_k\ &=\ \int_0^{a_k/4} \! \psi(a_k-4x) \, \mu_+(\d x) 
\ =\ \sum_{j \geq k} \int_{[a_k-a_j, a_k - a_{j+1})/4} \! \psi(a_k-4x) \, \mu_+(\d x) \\ 
&=\ \sum_{j \geq k} \delta^j \bigg[\mu_+ \Big(\frac{a_k - a_{j+1}}4 \Big) - \mu_+\Big(\frac{a_k - a_{j}}4 \Big) \bigg] \\ 
&=\ \delta^k \sum_{j=0}^\infty \delta^j \bigg[\mu_+ \Big(\frac{a_k - a_{k+j+1}}4 \Big) - \mu_+\Big(\frac{a_k - a_{k+j}}4 \Big) \bigg] \\ 
&=\ \psi(t) \sum_{j=0}^\infty \delta^j (1 - \delta) \mu_+\Big(\frac{a_k - a_{k+j+1}}4 \Big), 
\end{align*} 
which we can express as 
\begin{equation*} 
\psi(t)\,\E\bigg[\mu_+\bigg(\frac14 \sum_{l=0}^{X} \frac{\mu_+^{-1}(\delta^{k+l})}{k+l} \bigg)\bigg] 
\end{equation*} 
where $X$ is a geometric random variable, namely, $\Prob(X = j) = (1 - \delta)\delta^j$, $j \in \N_0$. 
Using the convexity of $\mu_+$ and $\mu_+(0) = 0$, we have $\mu_+(x/4) \leq \mu_+(x)/4$ for all $x \geq 0$, leading to the bound 
\begin{equation}\label{eq:I_k-bound} 
I_k\ \leq\ \frac14 \psi(t)\,\E\bigg[\mu_+\bigg(\sum_{l=0}^{X} \frac{\mu_+^{-1}(\delta^{k+l})}{k+l} \bigg)\bigg]. 
\end{equation} 
To complete the proof, we need to show that the expectation on the right-hand side is bounded by $2 \delta^k = 2 \psi(t)$ for all $k \geq k_0 \vee 1$. 
Once this is established, we arrive at the inequality 
\begin{equation*} 
\psi_{n+1}(t)\ \leq\ 1 - \exp\Big(-\psi(t) \Big(1 + \frac{\psi(t)}{2} \Big) \Big). 
\end{equation*} 
By the elementary inequality 
\begin{equation*} 
1 - \exp\Big[-x\Big(1 + \frac{x}2 \Big) \Big] \leq x \quad \text{for all }x \in [0,1], 
\end{equation*} 
we obtain $\psi_{n+1}(t) \leq \psi(t)$, completing the induction. Regarding the elementary inequality, we note that 
the function on the left-hand side vanishes at the origin, has derivative $1$ there, 
and is increasing and concave on the positive halfline. 

\vspace{.1cm} 
Now, we bound the expectation in \eqref{eq:I_k-bound}. Let 
\begin{equation*} 
Y_k\ \defeq\ \sum_{l=0}^X \frac{1}{k+l}, \quad k \in \N 
\end{equation*} 
and consider the two possible outcomes $Y_k \leq 1$ and $Y_k > 1$ separately. 
We claim that for all $j \in \N_0$, 
\begin{equation}\label{eq:mu_+(sum)-bound} 
\mu_+\Bigg(\sum_{l=0}^{j} \frac{\mu_+^{-1}(\delta^{k+l})}{k+l}\Bigg)\ \leq\ 1. 
\end{equation} 
If this claim holds, then using ${Y_k \leq (X+1)/k}$, we obtain 
\begin{equation*} 
\E\bigg[\mu_+ \bigg(\sum_{l=0}^{X} \frac{\mu_+^{-1}(\delta^{k+l})}{k+l} \bigg) \1\{Y_k > 1\}\bigg]\ \leq\ \Prob(Y_k > 1)\ \leq\ \Prob(X > k-1)\ =\ \delta^{k}. 
\end{equation*} 
To prove \eqref{eq:mu_+(sum)-bound}, we first estimate 
\begin{equation*} 
\sum_{l=0}^{j} \frac{\mu_+^{-1}(\delta^{k+l})}{k+l}\ \leq\ a_k. 
\end{equation*} 
If $k_0 = 0$, then by \eqref{eq:k_0}, 
\begin{equation*} 
a_k\ \leq\ a_1 < 4t_0\ =\ \frac{\delta}{\delta + 2}\mu_+^{-1}(1)\ \leq\ \mu_+^{-1}(1), 
\end{equation*} 
which implies \eqref{eq:mu_+(sum)-bound}. 
If $k_0 > 0$, we proceed as follows. 
For $x_j \defeq \mu_+^{-1}(\delta^j)/j$, $j \in \N$, we have 
\begin{equation*} 
\frac{x_{j}}{x_{j+1}}\ =\ \frac{j+1}{j}\frac{\mu_+^{-1}(\delta^j)}{\mu_+^{-1}(\delta^{j+1})}\ \leq\ \frac2\delta, 
\end{equation*} 
where we use the concavity of $\mu_+^{-1}$ and that $\mu_+(0) = 0$, which imply $\mu_+^{-1}(\delta x) \geq \delta \mu_+^{-1}(x)$ for all $x \geq 0$. 
Now, using \eqref{eq:k_0}, we estimate 
\begin{equation*} 
a_{k_0}\ =\ a_{k_0 + 1} + x_{k_0}\ \leq\ a_{k_0 + 1}\Big(1 + \frac{x_{k_0}}{x_{k_0 + 1}}\Big) 
\leq 4t_0\Big(1 + \frac2\delta \Big)\ =\ \mu_+^{-1}(1). 
\end{equation*} 
Thus, with $a_k \leq a_{k_0}$, we again conclude \eqref{eq:mu_+(sum)-bound}. 

\vspace{.1cm} 
Finally, we estimate the expectation in \eqref{eq:I_k-bound} on $\{Y_k \leq 1\}$. 
Since $\mu_+$ is increasing, we have 
\begin{equation*} 
\mu_+ \Bigg(\sum_{l=0}^{X} \frac{\mu_+^{-1}(\delta^{k+l})}{k+l} \Bigg)\ \leq\ \mu_+ \Bigg( \frac{1}{Y_k} \sum_{l=0}^{X} \frac{\mu_+^{-1}(\delta^{k+l})}{k+l} \Bigg). 
\end{equation*} 
By Jensen's inequality, we see that 
\begin{equation*} 
\frac1{Y_k} \sum_{l=0}^{X} \frac{\delta^{k+l}}{k+l}\ \leq\ \delta^k. 
\end{equation*} 
Therefore, we obtain 
\begin{equation*} 
\E\Bigg[\mu_+ \Bigg(\sum_{l=0}^{X} \frac{\mu_+^{-1}(\delta^{k+l})}{k+l}\Bigg)\1\{Y_k \leq 1\}\Bigg]\ \leq\ \delta^k \Prob(Y_k \leq 1) \leq \delta^k. 
\end{equation*} 
Combining the two cases, we arrive at the final bound 
\begin{equation*} 
\E\Bigg[\mu_+ \Bigg(\sum_{l=0}^{X} \frac{\mu_+^{-1}(\delta^{k+l})}{k+l} \Bigg)\Bigg]\ \leq\ 2 \delta^k 
\end{equation*} 
for all $k\geq k_0\vee 1$. This completes the proof. 
\end{proof} 
\begin{lemma}\label{lemma:int-bound} 
Let $\mu_+:[0,\infty)\to [0,\infty)$ be a convex function that satisfies $\mu_+(0) = 0$, $\mu_+(t) > 0$ for all $t > 0$, and condition \eqref{eq:sum-condition} for some ${\delta \in (0,1)}$. 
Then, with $t_0$ from Lemma \ref{lemma:psi-comparison}, 
\begin{equation*} 
\int_0^{\delta F(t_0)} \frac{\mu_+^{-1}(x)}{x|\log x|} \, \d x\ \leq\ 4 t_0. 
\end{equation*} 
\end{lemma}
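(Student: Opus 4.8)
The plan is to feed the pointwise bound $F(t)\le\psi(4t)$ from Lemma~\ref{lemma:psi-comparison} into a direct, termwise estimate of the integral along the geometric grid $(\delta^n)_{n\ge0}$. Recall from the proof of Lemma~\ref{lemma:psi-comparison} that under the present hypotheses $\mu_+$ is a continuous, strictly increasing bijection of $[0,\infty)$, so that $\mu_+^{-1}$ is its genuine (increasing) inverse, and that $k_0\in\N_0$ is the index with $a_{k_0+1}<4t_0\le a_{k_0}$. Since then $4t_0\in(a_{k_0+1},a_{k_0}]$, the definition of $\psi$ gives $\psi(4t_0)=\delta^{k_0}$, hence $F(t_0)\le\delta^{k_0}$ and therefore $\delta F(t_0)\le\delta^{k_0+1}<1$. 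As the integrand $x\mapsto\mu_+^{-1}(x)/(x|\log x|)$ is nonnegative on $(0,1)$, enlarging the range of integration is harmless, so it suffices to prove
\begin{equation*}
\int_0^{\delta^{k_0+1}}\frac{\mu_+^{-1}(x)}{x|\log x|}\,\d x\ \le\ 4t_0.
\end{equation*}

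For this I would split $(0,\delta^{k_0+1}]$ into the blocks $(\delta^{n+1},\delta^n]$ with $n\ge k_0+1$. On such a block, monotonicity of $\mu_+^{-1}$ gives $\mu_+^{-1}(x)\le\mu_+^{-1}(\delta^n)$, while $|\log x|=-\log x\ge n\log(1/\delta)$ and $\int_{\delta^{n+1}}^{\delta^n}\d x/x=\log(1/\delta)$; multiplying these three bounds yields
\begin{equation*}
\int_{\delta^{n+1}}^{\delta^n}\frac{\mu_+^{-1}(x)}{x|\log x|}\,\d x\ \le\ \frac{\mu_+^{-1}(\delta^n)}{n}.
\end{equation*}
Summing over $n\ge k_0+1$ and recognising the right-hand side as the tail $a_{k_0+1}=\sum_{k\ge k_0+1}\mu_+^{-1}(\delta^k)/k$ — finite by \eqref{eq:sum-condition} — gives $\int_0^{\delta^{k_0+1}}\mu_+^{-1}(x)/(x|\log x|)\,\d x\le a_{k_0+1}$, and by the defining property \eqref{eq:k_0} of $k_0$ we have $a_{k_0+1}<4t_0$. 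This completes the argument.

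The proof is elementary once Lemma~\ref{lemma:psi-comparison} is available; there is no genuine obstacle. The only points needing care are orienting the inequalities correctly — in particular, that replacing $\delta F(t_0)$ by the larger value $\delta^{k_0+1}$ only increases the integral, which is legitimate precisely because the integrand is nonnegative and $\delta^{k_0+1}<1$ keeps us away from the logarithmic singularity at $x=1$ — and choosing the summation index so that the block estimate collapses exactly onto the tail $a_{k_0+1}$ featuring in the choice of $k_0$.
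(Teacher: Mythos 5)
Your proof is correct and follows essentially the same route as the paper's: after bounding $F(t_0)\le\psi(4t_0)=\delta^{k_0}$ via Lemma~\ref{lemma:psi-comparison}, both arguments enlarge the range to $(0,\delta^{k_0+1}]$ and then reduce the integral, block by block on the geometric grid, to the tail $a_{k_0+1}<4t_0$. The only cosmetic difference is that the paper first performs the substitution $x=\delta^y$ and splits the resulting $y$-integral over unit intervals, while you decompose the $x$-integral directly over the blocks $(\delta^{n+1},\delta^n]$ — these are the same estimate.
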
 

\begin{proof} 
From Lemma \ref{lemma:psi-comparison}, we know that $F(t_0) \leq \psi(4t_0)$. 
Using this, we perform the substitution $x = \delta^y$, which leads to 
\begin{equation*} 
\int_0^{\delta F(t_0)}\frac{\mu_+^{-1}(x)}{x|\log x|} \, \d x 
\ \leq\ \int_0^{\delta \psi(4t_0)} \frac{\mu_+^{-1}(x)}{x|\log x|} \, \d x 
\ =\ \int_{\log_\delta( \delta \psi(4t_0) )}^\infty \frac{\mu_+^{-1}(\delta^y)}{y} \, \d y. 
\end{equation*} 
Next, let $k \in \N_0$ be such that $4t_0 \in (a_{k+1}, a_k]$, where $a_{k}$ is defined in Lemma \ref{lemma:psi-comparison}. Then we have ${\log_\delta( \delta \psi(4t_0) ) = k+1}$, and by splitting the integral into a sum of integrals, we infer 
\begin{equation*} 
\int_{k+1}^\infty \frac{\mu_+^{-1}(\delta^y)}{y} \, \d y 
\ =\ \sum_{j \geq k+1} \int_j^{j+1} \frac{\mu_+^{-1}(\delta^y)}{y} \, \dy 
\ \leq\ \sum_{j \geq k+1} \frac{\mu_+^{-1}(\delta^j)}{j}\ =\ a_{k+1}\ <\ 4t_0. 
\end{equation*} 
This completes the proof. 
\end{proof} 

\begin{proof}[Proof of Theorem \ref{Thm:explosion-Poisson}] 
The sufficiency of the integrability condition follows from Theorem \ref{Thm:sufficient-explosion-Amini}. We now turn to the necessity and prove that the integrability condition \eqref{eq:explosion-integral-cond} is indeed required for explosion. 
Since both \eqref{eq:explosion-integral-cond} and the condition on $\xi$ yielding an explosive CMJ process only depend on $\mu_+$ near zero (see Corollary \ref{cor:minimal fixed point}), 
we can assume, without loss of generality, that $\mu_+$ is convex everywhere. 
To approximate $\mu_+$, we define a sequence of cumulative mass functions $(\nu_n)_{n \in \N}$, 
where each $\nu_n$ is derived from $\mu_+$ by linearizing $\mu_+$ in the interval $[0,1/n]$. 
Formally, we set 
\begin{equation*} 
\nu_n(x)\ \defeq\ \begin{cases} 
nx\mu_+(1/n) &\text{if } x < 1/n, \\ 
\mu_+(x) &\text{if } x \geq 1/n. 
\end{cases} 
\end{equation*} 
Since $\mu_+$ is convex, we have $\nu_n \geq \mu_+$, 
and hence $\nu_n \downarrow \mu_+$ as $n \to \infty$. 
Additionally, for each $n\in\N$, $\nu_n$ is convex and satisfies the condition 
\begin{equation*} 
\sum_{k \in \N} \frac{\nu_n^{-1}(\delta^k)}{k} < \infty\quad\text{for every }\delta \in (0,1). 
\end{equation*} 
Let $\xi_n$ be a Poisson point process with intensity $\E[\xi_n[0,t]]= 1 + \nu_n(t)$, and let $F_n$ denote the (left-continuous version of the) distribution function of the explosion time in the CMJ process with reproduction point process $\xi_n$. 
By Proposition \ref{prop:Poisson-comparison} below, we know that $F_n \geq F$ (irrespective of whether $\xi$ yields an explosive CMJ process or not). 

\vspace{.1cm} 
Fix $\delta \in (0,1)$ and define $t_0 > 0$ as in Lemma \ref{lemma:psi-comparison}. 
We then have for all sufficiently large $n$ 
\begin{equation*} 
\int_0^{\delta F(t_0)} \frac{\nu_n^{-1}(x)}{x |\log x|} \, \d x 
\ \leq\ \int_0^{\delta F_n(t_0)} \frac{\nu_n^{-1}(x)}{x |\log x|} \, \d x 
\ \leq\ \frac{\delta}{2+\delta} \mu_+^{-1}(1), 
\end{equation*} 
where the last inequality follows from Lemma \ref{lemma:int-bound}, the definition of $t_0$, and the fact that $\nu_n^{-1}(1) = \mu_+^{-1}(1)$ for all sufficiently large $n$. 
Using the monotone convergence theorem, we deduce 
\begin{equation*} 
\int_0^{\delta F(t_0)} \frac{\mu^{-1}(x)}{x |\log x|} \, \d x < \infty. 
\end{equation*} 
Now, if $\xi$ yields an explosive CMJ process, then $F(t_0) > 0$ by Corollary \ref{cor:minimal fixed point}, which completes the proof. 
\end{proof} 
\subsection{Comparison methods}\label{subsec:comparison} 

In this section, we present results that enable us to infer the explosiveness of a CMJ process with a Poisson reproduction point process by comparing its intensity measure to that of another process. 
\begin{proposition}\label{prop:Poisson-comparison} 
Let $\xi$ and $\xi'$ be Poisson point processes with intensity measures $\mu$ and $\mu'$, respectively. 
Suppose there exists $t_0 > 0$ such that $\mu[0,t] \leq \mu'[0,t]$ holds for all $t \leq t_0$. 
Then, if $\xi$ yields an explosive CMJ process, so does $\xi'$. 
Moreover, the respective explosion times $T$~and $T'$ satisfy \eqref{eq:explosion-time-domination}. 
\end{proposition}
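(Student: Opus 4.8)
The plan is to verify the hypothesis of Lemma \ref{lemma:T-comparison} for the pair $\xi,\xi'$ and then invoke that lemma. First I would recall the explicit form of the smoothing transform in the Poisson case from \eqref{eq:poisson-smoothing}: writing $\Smooth$ and $\Smooth'$ for the smoothing transforms associated with $\xi$ and $\xi'$, one has
$\Smooth \! \phi(t) = \exp\big(-(1-\phi)\ast\mu(t)\big)$ and $\Smooth' \! \phi(t) = \exp\big(-(1-\phi)\ast\mu'(t)\big)$ for all $\phi\in\cM$ and $t\geq 0$. Thus the required comparison $\Smooth \! \phi(t)\geq\Smooth' \! \phi(t)$ reduces, via the monotonicity of $x\mapsto\e^{-x}$, to the convolution inequality $(1-\phi)\ast\mu(t)\leq(1-\phi)\ast\mu'(t)$ for $t\leq t_0$.

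Next I would observe that for any $\phi\in\cM$ the function $f\defeq 1-\phi$ is non-negative, bounded by $1$, left-continuous, non-decreasing (since $\phi$ is non-increasing), and satisfies $f(t)=0$ for all $t\leq 0$, because $\phi\equiv 1$ on $(-\infty,0]$. Hence $f$ is precisely of the type handled in the proof of Proposition \ref{prop:comparison}, where it is shown that for such an $f$ and Borel measures $\nu,\nu'$ on $[0,\infty)$ with $\nu[0,t]\leq\nu'[0,t]$ for all $t\leq t_0$ one has $f\ast\nu(t)\leq f\ast\nu'(t)$ for all $t\leq t_0$ (the argument there passes to the Stieltjes measure $\mu_f$ of $f$, uses commutativity of convolution to rewrite $f\ast\nu(t)=\int\nu[0,t-x)\,\mu_f(\d x)$, and applies the cumulative comparison under the integral). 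Applying this with $\nu=\mu$, $\nu'=\mu'$ and $f=1-\phi$ yields $(1-\phi)\ast\mu(t)\leq(1-\phi)\ast\mu'(t)$ for all $t\leq t_0$, and therefore $\Smooth \! \phi(t)\geq\Smooth' \! \phi(t)$ for all $\phi\in\cM$ and $t\leq t_0$.

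With this hypothesis established, Lemma \ref{lemma:T-comparison} immediately gives that $\xi'$ yields an explosive branching process whenever $\xi$ does, together with the domination $\Prob(T\geq t)\geq\Prob(T'\geq t)$ for all $t\leq t_0$, i.e.\ \eqref{eq:explosion-time-domination}. This completes the proof.

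I do not anticipate a genuine obstacle; the only subtlety worth flagging is that Proposition \ref{prop:comparison} itself cannot be invoked directly, since a pointwise inequality of cumulative mass functions $\mu[0,t]\leq\mu'[0,t]$ need not arise from a coupling $\tilde\xi(t)\leq\tilde\xi'(t)$ — the measures $\mu$ and $\mu'$ themselves need not be ordered. It is the explicit Laplace-functional representation \eqref{eq:poisson-smoothing} that converts the cumulative comparison into the pointwise domination of smoothing transforms required by Lemma \ref{lemma:T-comparison}, and the Poisson assumption is used precisely at this step.
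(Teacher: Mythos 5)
Your proof is correct and follows the paper's own approach: write the Poisson smoothing transforms via \eqref{eq:poisson-smoothing}, apply the convolution-monotonicity argument from the proof of Proposition \ref{prop:comparison} to the bounded, increasing, left-continuous function $1-\phi$ (which vanishes on $(-\infty,0]$), and conclude with Lemma \ref{lemma:T-comparison}. Your closing caveat, however, is not quite right: for Poisson processes the cumulative comparison $\mu[0,t]\leq\mu'[0,t]$ for $t\leq t_0$ \emph{does} admit a coupling with $\tilde\xi[0,t]\leq\tilde\xi'[0,t]$ almost surely for $t\leq t_0$ — take a single unit-rate Poisson process $\eta$ on $[0,\infty)$ and set $\tilde\xi[0,t]\defeq\eta\big[0,\mu[0,t]\big]$, $\tilde\xi'[0,t]\defeq\eta\big[0,\mu'[0,t]\big]$, which by the mapping theorem are Poisson with intensities $\mu$ and $\mu'$ and are ordered on $[0,t_0]$ — so Proposition \ref{prop:comparison} could in fact be invoked directly; the Laplace-functional route is simply cleaner, and is the one the paper also takes.
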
 

\begin{proof} 
Let $\Smooth$ and $\Smooth'$ denote the smoothing transforms associated with $\xi$ and $\xi'$, respectively, as defined in \eqref{eq:ST}. 
By \eqref{eq:poisson-smoothing}, for all $\phi \in \cM$ and $t \leq t_0$, we have 
\begin{equation*} 
\Smooth \! \phi(t)\ =\ \exp\big( -(1-\phi) \ast \mu(t)\big)\ \geq\ \exp\big( -(1-\phi) \ast \mu'(t)\big)\ =\ \Smooth' \! \phi(t), 
\end{equation*} 
where we used that $1 - \phi$ is increasing (see the proof of Proposition \ref{prop:comparison}). 
The claim now follows from Lemma \ref{lemma:T-comparison}. 
\end{proof} 
\begin{proof}[Second proof.] 
Let $(\eta(t))_{t \geq 0}$ be a homogeneous Poisson process with unit rate. 
Then $t \mapsto \eta(\mu(t))$ is the cumulative mass function of a Poisson point process $\tilde{\xi}$ with intensity measure $\mu$. 
The same is true for $t \mapsto \eta(\mu'(t))$ so that we obtain a coupling $(\tilde{\xi}, \tilde{\xi}')$ of $\xi$ and $\xi'$. 
Now we observe that $\mu(t) \leq \mu'(t)$ for all $t \leq t_0$ implies 
\begin{equation*} 
\tilde{\xi}(t) = \eta(\mu(t)) \leq \eta(\mu'(t)) = \tilde{\xi}'(t) 
\end{equation*} 
almost surely. 
The claim follows from Proposition \ref{prop:comparison}. 
\end{proof} 
A striking feature of Theorem \ref{Thm:explosion-Poisson} is that explosion appears to be robust under scaling of $\mu_+$: the condition \eqref{eq:explosion-integral-cond} holds for $\mu_+$ if and only if it holds for $a\mu_+$ for some $a > 0$. 
In the following, we generalize this observation by replacing the convexity assumption of Theorem \ref{Thm:explosion-Poisson} with the condition 
\begin{align}\label{eq:convexity-replacement} 
\begin{gathered} 
\text{There exists a function }\varphi:[0,\infty) \to [0,\infty)\text{ with }\lim_{\lambda \to 0} \varphi(\lambda) = 0 \text{ such that}\\ 
\limsup_{(t,\lambda) \to 0}\ \frac{\mu_+(\lambda t)}{\varphi(\lambda)\mu_+(t)} < \infty. 
\end{gathered} 
\end{align} 
Note that for convex $\mu_+$, \eqref{eq:convexity-replacement} holds with $\varphi(\lambda) = \lambda$. 

\begin{lemma} 
Let $\mu_+:[0,\infty)\to [0,\infty)$ be an increasing function satisfying $\mu_+(0) = 0$, $\mu_+(t) > 0$ for all $t > 0$, and condition \eqref{eq:convexity-replacement}. 
Then, a Poisson point process with intensity $\mu=\delta_0 + \mu_+$ yields an explosive CMJ process 
if and only if a Poisson point process with intensity $\delta_0 + a \mu_+$, for some $a > 0$, does as well. 
\end{lemma}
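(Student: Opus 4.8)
The plan is to deduce the lemma from the Poisson domination result of Proposition~\ref{prop:Poisson-comparison} by exploiting that explosion is invariant under a deterministic time change. For $c>0$ write $g_c(x)\defeq cx$ and set $\mu_+^{(c)}(t)\defeq\mu_+(t/c)$; this is the cumulative mass function of the image measure $(g_c)_*\mu_+$, so a Poisson point process with intensity $\mu^{(c)}\defeq\delta_0+\mu_+^{(c)}$ is exactly what one obtains from a Poisson point process of intensity $\mu=\delta_0+\mu_+$ by dilating every point by the factor $c$ (the atom at $0$ being fixed by $g_c$). The two steps are: (i) for every $c>0$ the $\mu$-process is explosive if and only if the $\mu^{(c)}$-process is; (ii) choose $c$, using \eqref{eq:convexity-replacement}, so large that $\mu^{(c)}_+$ is dominated near $0$ by $a\mu_+$, and invoke Proposition~\ref{prop:Poisson-comparison}.

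For step (i) I would use an explicit coupling. On the probability space carrying the i.i.d.\ reproduction processes $(\xi_u)_{u\in\cI}$ of the $\mu$-process, set $\xi_u'\defeq(g_c)_*\xi_u=\sum_j\delta_{cX_{u,j}}$ for $u\in\cI$. By the mapping theorem each $\xi_u'$ is again Poisson with intensity $(g_c)_*\mu=\delta_0+\mu_+^{(c)}$, these are i.i.d., and the number of points at $0$ is unchanged, so $(\xi_u')_{u\in\cI}$ is a legitimate realization of the $\mu^{(c)}$-process. Since $g_c$ is increasing and linear, the associated displacements satisfy $X_{u,j}'=cX_{u,j}$, and induction on $|u|$ gives $S_u'=cS_u$ for all $u$; hence $\cZ_t'=\cZ_{t/c}$ in the notation of \eqref{eq:Z-def}, so $T'=cT$ by \eqref{eq:explosion-time} and in particular $\{T'<\infty\}=\{T<\infty\}$. (Alternatively, one checks via \eqref{eq:poisson-smoothing} that $\phi\mapsto\phi(\cdot/c)$ is an order isomorphism of $\cM$ conjugating the two Poisson smoothing transforms, hence mapping the minimal fixed point $\Fbar$ of one onto that of the other by Proposition~\ref{prop:minimal-fixed-point}; the two are nontrivial simultaneously.)

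For step (ii), fix $a\in(0,1)$. By \eqref{eq:convexity-replacement} there are $C<\infty$ and $\eta>0$ with $\mu_+(\lambda t)\le C\varphi(\lambda)\mu_+(t)$ for all $t,\lambda\in(0,\eta)$, and since $\varphi(\lambda)\to0$ we may fix $\lambda_0\in(0,\eta)$ with $C\varphi(\lambda_0)\le a$; with $c\defeq1/\lambda_0>1$ this yields $\mu_+^{(c)}(t)=\mu_+(t/c)\le a\mu_+(t)$ for all $t\in(0,\eta)$, hence $\mu^{(c)}[0,t]\le(\delta_0+a\mu_+)[0,t]$ on a right-neighbourhood of $0$. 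Proposition~\ref{prop:Poisson-comparison} then shows that explosiveness of the $\mu^{(c)}$-process implies explosiveness of the $(\delta_0+a\mu_+)$-process, and combining this with step (i) gives: if the $\mu$-process is explosive, so is the $(\delta_0+a\mu_+)$-process, for every $a\in(0,1)$. For $a\ge1$ the same conclusion is immediate from $\delta_0+\mu_+\le\delta_0+a\mu_+$ and Proposition~\ref{prop:Poisson-comparison}. Finally $a\mu_+$ again satisfies the hypotheses of the lemma, with the \emph{same} function $\varphi$, so applying the implication just proved to $a\mu_+$ with scaling factor $1/a$ gives the reverse implication; thus explosiveness of the $\mu$-process is in fact equivalent to that of the $(\delta_0+a\mu_+)$-process for \emph{every} $a>0$, which is more than claimed. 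Throughout one may, by Corollary~\ref{cor:minimal fixed point}, replace $\mu_+$ by its restriction to a small neighbourhood of $0$, so that the bounds ``for $t\in(0,\eta)$'' are enough.

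The only point I expect to require care is step (i): checking that the dilated reproduction process is genuinely Poisson with intensity $(g_c)_*\mu$ and that the atom at $0$ is treated correctly in the coupling. This is, however, just the mapping theorem for Poisson processes together with $g_c(0)=0$, so it is not a real obstacle; everything else is bookkeeping on top of Propositions~\ref{prop:minimal-fixed-point} and \ref{prop:Poisson-comparison} and Corollary~\ref{cor:minimal fixed point}.
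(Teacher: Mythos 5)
Your proof is correct and follows the same two pillars as the paper's argument: invariance of explosion under deterministic time scaling of the reproduction point process, plus the Poisson domination result of Proposition~\ref{prop:Poisson-comparison}. The notable difference is how you obtain the key comparison inequality. The paper routes through the appendix Lemma~\ref{lemma:argument-scaling}, which establishes a full equivalence between \eqref{eq:convexity-replacement} and the scaling property ``for all $a>1$ there is $c>1$ with $a\mu_+(t)\le\mu_+(ct)$ near $0$,'' and then chains Poisson$(\delta_0+a\mu_+)\Rightarrow$ Poisson$(\delta_0+\mu_+(c\,\cdot))\Rightarrow$ Poisson$(\delta_0+\mu_+)$. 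You observe that only one direction of that equivalence is needed and extract it in two lines directly from the $\limsup$ in \eqref{eq:convexity-replacement}: pick $C,\eta$ so that $\mu_+(\lambda t)\le C\varphi(\lambda)\mu_+(t)$ for small $\lambda,t$, then pick $\lambda_0$ with $C\varphi(\lambda_0)\le a$ and set $c=1/\lambda_0$, giving $\mu_+(\cdot/c)\le a\mu_+$ near $0$; your chain runs in the mirror direction Poisson$(\delta_0+\mu_+)\Rightarrow$ Poisson$(\delta_0+\mu_+(\cdot/c))\Rightarrow$ Poisson$(\delta_0+a\mu_+)$. This is a genuine (if modest) simplification because it makes the lemma self-contained modulo Proposition~\ref{prop:Poisson-comparison}, whereas the paper still needs Lemma~\ref{lemma:argument-scaling} for its converse direction elsewhere. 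You also make the time-scaling invariance explicit via a coupling $S_u'=cS_u$ (and a second route via conjugation of smoothing transforms and Proposition~\ref{prop:minimal-fixed-point}), where the paper merely asserts that $\Fbar(c\,\cdot)$ is the minimal fixed point for the dilated process. Both treatments are correct, and your version proves the equivalence for every $a>0$, which is what the lemma intends.
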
 

\begin{proof} 
By Proposition \ref{prop:Poisson-comparison}, 
it suffices to show that explosion with intensity $\delta_0 + a\mu_+$ for $a > 1$ 
implies explosion with intensity $\delta_0 + \mu_+$. 
By Lemma \ref{lemma:argument-scaling}, there exist $c > 1$ and $t_0 > 0$ 
such that 
$$ a\mu_+(t)\,\leq\,\mu_+(ct)\,\eqdef\, \mu_c(t)\quad\text{for all }t \leq t_0. $$ 
Thus, by Proposition \ref{prop:Poisson-comparison}, 
a Poisson point process with intensity $\delta_0 + \mu_c$ yields an explosive CMJ process. 
Next, we use the fact that explosion is robust under time scaling. Specifically, if 
$\xi$ yields an explosive CMJ process and $\Fbar$ is the non-trivial fixed of the smoothing transform, 
then the function 
$$ \Fbar_c(t)\,\defeq\,\Fbar(ct) $$ 
is the corresponding non-trivial fixed point of the smoothing transform associated with the scaled reproduction point process $\xi_c(B) \defeq \xi(cB)$, $B \subseteq [0,\infty)$ a Borel set. 
Therefore, $\xi_c$ yields an explosive CMJ process as well, and we thus conclude the same for a Poisson point process with intensity $\delta_0 + \mu_+$. 
\end{proof} 

\begin{corollary} 
Let $\xi$ and $\xi'$ be Poisson point processes with intensity measures $\mu$ and $\mu'$, respectively. Assume that $\mu_+'(t) \defeq \mu'(0,t]$ satisfies \eqref{eq:convexity-replacement}, and 
\begin{equation*} 
\limsup_{t \downarrow 0} \frac{\mu(0,t]}{\mu'(0,t]} < \infty. 
\end{equation*} 
Then, if $\xi$ yields an explosive CMJ process, so does $\xi'$. 
\end{corollary}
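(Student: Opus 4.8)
The plan is to combine the Poisson comparison principle of Proposition~\ref{prop:Poisson-comparison} with the scaling‑robustness lemma proved immediately above. Throughout we work in the standing critical setting, so that $\mu = \delta_0 + \mu_+$ and $\mu' = \delta_0 + \mu_+'$ with the mass‑$1$ atoms forced by \eqref{eq:critical}; by Corollary~\ref{cor:minimal fixed point} explosion of either process depends only on the intensity in a neighborhood of $0$, so we may freely restrict attention to such a neighborhood and, in particular, assume $\mu_+'(t)>0$ there via \eqref{eq:positivemu_+}.

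First I would turn the hypothesis $\limsup_{t\downarrow 0}\mu(0,t]/\mu'(0,t]<\infty$ into an honest pointwise bound: there exist a constant $a\in[1,\infty)$ (enlarge it if necessary so that $a\ge 1$) and a threshold $t_1>0$ with $\mu_+(t)\le a\,\mu_+'(t)$ for all $t\le t_1$, using that $\mu_+'$ is positive near $0$ so the ratio is well defined. Consequently, for $t\le t_1$,
\[
\mu[0,t]\ =\ 1+\mu_+(t)\ \le\ 1+a\,\mu_+'(t)\ =\ (\delta_0+a\mu_+')[0,t].
\]
Letting $\widehat\xi$ denote the Poisson point process with intensity measure $\delta_0+a\mu_+'$, Proposition~\ref{prop:Poisson-comparison} then shows that $\widehat\xi$ yields an explosive branching process, since $\xi$ does.

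It remains to pass from the intensity $\delta_0+a\mu_+'$ down to $\delta_0+\mu_+'=\mu'$, which is exactly the content of the lemma immediately preceding this corollary, applied to $\mu_+'$: the function $\mu_+'$ is non‑decreasing (being a cumulative mass function) with $\mu_+'(0)=0$ and positive on $(0,\infty)$ after the localization above, and it satisfies \eqref{eq:convexity-replacement} by assumption; hence a Poisson point process with intensity $\delta_0+\mu_+'$ is explosive if and only if a Poisson point process with intensity $\delta_0+a\mu_+'$ is explosive for some $a>0$. We have just exhibited such an $a$, so $\xi'$ yields an explosive branching process, as claimed.

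The argument is essentially bookkeeping and I do not expect a substantive obstacle; the only points needing a little care are making the $\limsup$ hypothesis into a genuine inequality with a constant $a\ge 1$ on a fixed neighborhood of $0$, treating the atom at $0$ consistently (both intensities carry the same $\delta_0$), and checking that $\mu_+'$ meets the mild regularity hypotheses of the preceding lemma — which it does for the reasons just noted, with strict positivity near $0$ supplied by \eqref{eq:positivemu_+} together with Corollary~\ref{cor:minimal fixed point}. (Should one wish to drop the standing assumptions, one should additionally note that if $\xi$ is explosive with $\mu(0)>1$ or $\mu(0)<1,\ \mu(t)=\infty$ for $t>0$, the hypothesis forces the analogous degenerate behavior of $\mu'$ and explosion of $\xi'$ follows from Proposition~\ref{Prop:Jagers}; but in the context of this section it is natural to assume \eqref{eq:critical} for both processes.)
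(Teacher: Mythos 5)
Your argument is correct and is precisely the routine deduction the authors leave implicit: convert the $\limsup$ hypothesis into a pointwise bound $\mu_+ \le a\mu_+'$ near $0$, invoke Proposition~\ref{prop:Poisson-comparison} to transfer explosion to the Poisson process with intensity $\delta_0 + a\mu_+'$, and then use the lemma preceding the corollary (via \eqref{eq:convexity-replacement}) to drop the constant $a$. You also correctly flag the bookkeeping points — that the standing assumption \eqref{eq:positivemu_+} supplies the positivity of $\mu_+'$ near $0$ needed by that lemma, and that Corollary~\ref{cor:minimal fixed point} justifies localizing to a neighborhood of the origin — so nothing is missing.
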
 
\appendix 
\section{An auxiliary lemma} 

\begin{lemma}\label{lemma:argument-scaling} 
Let $\mu_+:[0,\infty) \to [0,\infty)$ be an unbounded increasing function with $\mu_+(0) = 0$ and $\mu_+(t) > 0$ for all $t > 0$. 
Then, the condition \eqref{eq:convexity-replacement} holds if and only if 
\begin{align} 
\begin{gathered}\label{eq:argument-scaling} 
\text{there exists }t_0 > 0 \text{ such that for all }a > 1\\ 
a\mu_+(t) \leq \mu_+(ct)\text{ for all }t \leq t_0\text{ and some }c=c(a)>1. 
\end{gathered} 
\end{align} 
\end{lemma}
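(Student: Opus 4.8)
The plan is to prove the equivalence of \eqref{eq:convexity-replacement} and \eqref{eq:argument-scaling} by establishing both implications separately, exploiting that $\mu_+$ is increasing and unbounded (so that $\mu_+^{-1}$ is well-defined as a generalized inverse and unbounded too).

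\textbf{From \eqref{eq:convexity-replacement} to \eqref{eq:argument-scaling}.} Suppose \eqref{eq:convexity-replacement} holds with a function $\varphi$ satisfying $\varphi(\lambda) \to 0$ as $\lambda \to 0$, and let $K \defeq \limsup_{(t,\lambda)\to 0} \mu_+(\lambda t)/(\varphi(\lambda)\mu_+(t)) < \infty$. Fix $a > 1$. The idea is to choose a small scaling parameter $\lambda_0$ such that $K\varphi(\lambda_0) < 1/a$, which is possible since $\varphi(\lambda) \to 0$; then set $c \defeq 1/\lambda_0 > 1$. By the $\limsup$ bound, there exists $t_0 > 0$ (depending on $a$) such that $\mu_+(\lambda_0 t) \leq (K + \epsilon)\varphi(\lambda_0)\mu_+(t)$ for all $t \leq t_0$, with $\epsilon$ small enough that $(K+\epsilon)\varphi(\lambda_0) \leq 1/a$ still holds. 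Applying this with $t$ replaced by $ct = t/\lambda_0$ (and noting $ct \le t_0$ forces shrinking $t_0$ by a factor $\lambda_0$, which is harmless) gives $\mu_+(t) \leq \frac{1}{a}\mu_+(ct)$, i.e.\ $a\mu_+(t) \leq \mu_+(ct)$ for all sufficiently small $t$. This yields \eqref{eq:argument-scaling}.

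\textbf{From \eqref{eq:argument-scaling} to \eqref{eq:convexity-replacement}.} This is the direction I expect to be the main obstacle, since we must manufacture a single function $\varphi$ controlling $\mu_+(\lambda t)$ uniformly in $t$, starting only from the one-parameter family of scaling relations. The natural candidate is $\varphi(\lambda) \defeq \sup_{0 < t \le t_0} \mu_+(\lambda t)/\mu_+(t)$ (or a slight variant, e.g.\ $\limsup_{t\downarrow 0}$ of the same ratio), for which the $\limsup$ in \eqref{eq:convexity-replacement} is bounded by $1$ by construction; the real work is showing $\varphi(\lambda) \to 0$ as $\lambda \to 0$. For this, fix $a > 1$ and use \eqref{eq:argument-scaling} to get $c = c(a) > 1$ with $a\mu_+(t) \le \mu_+(ct)$ for $t \le t_0$; iterating $n$ times gives $a^n \mu_+(t) \le \mu_+(c^n t)$, equivalently $\mu_+(c^{-n}s) \le a^{-n}\mu_+(s)$ for $s \le t_0$. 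Hence for $\lambda \le c^{-n}$ and using monotonicity, $\mu_+(\lambda t) \le \mu_+(c^{-n}t) \le a^{-n}\mu_+(t)$, so $\varphi(\lambda) \le a^{-n}$ whenever $\lambda \le c^{-n}$. Since $n \to \infty$ as $\lambda \downarrow 0$ and $a^{-n} \to 0$, we conclude $\varphi(\lambda) \to 0$. Here the subtlety is that $c$ depends on $a$, so one must first fix an arbitrary target $\eta > 0$, choose $a$ with... actually no: since $a > 1$ is fixed first and only then $c$, the bound $\varphi(\lambda) \le a^{-n}$ for $\lambda \le c(a)^{-n}$ already gives $\limsup_{\lambda \downarrow 0}\varphi(\lambda) \le a^{-n}$ for every $n$, hence $=0$. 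One should also record the harmless reductions: replacing $t_0$ by $t_0/c$ where needed so that $ct \le t_0$, and noting that since $\mu_+$ is unbounded and increasing, $\mu_+(t) > 0$ for $t > 0$ makes all ratios well-defined.

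\textbf{Assembly.} With both implications in hand the lemma follows. I would present the forward direction first as a short paragraph, then devote the bulk of the argument to the reverse direction, centering it on the explicit definition of $\varphi$ and the iteration $a^n\mu_+(t) \le \mu_+(c^n t)$; the only genuinely delicate point is the bookkeeping of how $t_0$ shrinks under repeated rescaling and the observation that fixing $a$ before $c$ is exactly what makes the quantifier structure of \eqref{eq:argument-scaling} strong enough to force $\varphi(\lambda)\to 0$.
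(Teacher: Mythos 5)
Your proof of the implication \eqref{eq:argument-scaling}~$\Rightarrow$~\eqref{eq:convexity-replacement} (your ``backward direction'') is correct and in fact cleaner than the paper's: where the paper builds $f(s)=1/c(1/s)$, argues $f(s)\to 0$ by contradiction, and then manufactures $\varphi$ through a generalized inverse of $f$, you simply take $\varphi(\lambda)\defeq\sup_{0<t\le t_0}\mu_+(\lambda t)/\mu_+(t)$ (finite, indeed $\le 1$, for $\lambda<1$ by monotonicity, and $>0$ since $\mu_+>0$ on $(0,\infty)$), so that the $\limsup$ in \eqref{eq:convexity-replacement} is $\le 1$ by construction, and then show $\varphi(\lambda)\to 0$ by iterating $a\mu_+(t)\le\mu_+(ct)$. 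The iteration is valid for $t\le t_0/c^{n-1}$, whence $\mu_+(c^{-n}s)\le a^{-n}\mu_+(s)$ for $s\le ct_0\supset(0,t_0]$, which gives $\varphi(\lambda)\le a^{-n}$ for $\lambda\le c^{-n}$ and thus $\varphi(\lambda)\to 0$. This is a genuine simplification.

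The gap is in the other direction. Condition \eqref{eq:argument-scaling} has the quantifier order $\exists\, t_0>0\ \forall\, a>1\ \exists\, c(a)$: a \emph{single} threshold $t_0$ must serve every $a$. Your argument, for a fixed $a$, produces $\lambda_0=\lambda_0(a)$ and then, after the substitution $t\mapsto t/\lambda_0$, yields $a\mu_+(t)\le\mu_+(ct)$ only for $t\le\lambda_0(a)\,\delta$ --- a range that shrinks with $a$ (you even flag ``$t_0>0$ (depending on $a$)'', and ``for all sufficiently small $t$''). This proves $\forall\,a\,\exists\,t_0(a)$, which is strictly weaker than what is asserted, so ``This yields \eqref{eq:argument-scaling}'' is a non sequitur as written. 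The gap is fixable using the hypothesis that $\mu_+$ is unbounded: fix any $t_0>0$, and given $a$, $c_1$, and $\tau=\tau(a)\le t_0$ with $a\mu_+(t)\le\mu_+(c_1t)$ for $t\le\tau$, enlarge $c\ge c_1$ so that $\mu_+(c\tau)\ge a\mu_+(t_0)$; then for $t\in(\tau,t_0]$ one has $a\mu_+(t)\le a\mu_+(t_0)\le\mu_+(c\tau)\le\mu_+(ct)$, and for $t\le\tau$ the original bound gives $a\mu_+(t)\le\mu_+(c_1t)\le\mu_+(ct)$. The paper sidesteps this quantifier issue entirely by proving this implication in the contrapositive (negating \eqref{eq:argument-scaling} produces a sequence $(t_n)$ directly, with $t_0$ fixed first), which is why that quantifier bookkeeping does not appear there; either your direct route with the extension step, or a contradiction argument, is needed to close the gap.
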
 

\begin{proof} 
Suppose \eqref{eq:argument-scaling} holds. 
Then there exists a function $f(s) \in (0,1)$ such that for all $s < 1$, we have ($a = 1/s$ and $f(s) = 1/c$) 
\begin{equation}\label{eq:mu_+-frac-bound} 
\frac{\mu_+(f(s) t)}{s\mu_+(t)}\ \leq\ 1\quad\text{for all }t \leq \frac{t_0}{f(s)}. 
\end{equation} 
Moreover, $\lim_{s \to 0} f(s) = 0$ must hold because if this were not true, there would exist a sequence $s_n \downarrow 0$ and some $t > 0$ such that $\mu_+(f(s_n) t)$ is bounded from below, contradicting \eqref{eq:mu_+-frac-bound}. 
It follows that the generalized inverse $f^{-1}(t) = \inf\{s> 0\,:\,f(s) \geq t\}$ exists in a neighborhood of zero and converges to $0$ as $t \to 0$. 
To see this, note that, if $0 < t < \sup_{0 < s \leq \eps} f(s)$, then $f^{-1}(t) < \varepsilon$, for any sufficiently small $\eps > 0$. 

\vspace{.1cm} 
Next, for $\lambda > 0$, define $\varphi(\lambda)$ as some $s > 0$ such that $s \leq 2f^{-1}(\lambda)$ and $f(s) \geq \lambda$. 
We then have $\varphi(\lambda) \leq 2 f^{-1}(\lambda) \to 0$ as $\lambda \to 0$ and 
\begin{equation*} 
\frac{\mu_+(\lambda t)}{\varphi(\lambda)\mu_+(t)}\ \leq\ \frac{\mu_+(f(\varphi(\lambda)) t)}{\varphi(\lambda)\mu_+(t)}\ \leq\ 1 
\end{equation*} 
by \eqref{eq:mu_+-frac-bound}, so that \eqref{eq:convexity-replacement} follows. 

\vspace{.1cm} 
Conversely, suppose that \eqref{eq:convexity-replacement} holds and that \eqref{eq:argument-scaling} is false. 
Then there exist $a > 1$, $t_0 > 0$, and a sequence $(t_n)_{n \in \N} \subseteq [0,t_0]$ such that for all $n \in \N$, 
\begin{equation}\label{eq:mu_+-t_n} 
1\ \leq\ \frac{\mu_+(n t_n)}{\mu_+(t_n)}\ <\ a. 
\end{equation} 
By compactness, there exists a subsequence $(t_{n_k})_{k \in \N}$ that converges to some limit $t \in [0,t_0]$. 
Moreover, the limit must be $0$, since otherwise there is a contradiction to \eqref{eq:mu_+-t_n}, using that $\mu_+$ increases to infinity. 
Since $\mu_+(n_k t_{n_k}) \leq a \mu_+(t_{n_k}) \to 0$ as $k \to \infty$, we infer $n_k t_{n_k} \to 0$ as well. 
Now, by \eqref{eq:convexity-replacement}, there exists a constant $C < \infty$ such that 
\begin{equation*} 
\frac{\mu_+(\lambda t)}{\varphi(\lambda)\mu_+(t)}\ <\ C 
\end{equation*} 
for all sufficiently small $\lambda, t > 0$. 
Thus, for sufficiently large $k$ (let $t = n_k t_{n_k}$, $\lambda = 1/n_k$), we have 
\begin{equation*} 
\frac{\mu_+(n_k t_{n_k})}{\mu_+(t_{n_k})}\ >\ \frac1{\varphi(1/n_k) C}, 
\end{equation*} 
which diverges to $\infty$ as $k \to \infty$ because $\lim_{\lambda \to 0}\varphi(\lambda) = 0$. 
This contradicts \eqref{eq:mu_+-t_n}, completing the proof. 
\end{proof} 



\providecommand{\bysame}{\leavevmode\hbox to3em{\hrulefill}\thinspace} 
\providecommand{\MR}{\relax\ifhmode\unskip\space\fi MR } 
\providecommand{\MRhref}[2]{%
\href{http://www.ams.org/mathscinet-getitem?mr=#1}{#2} 
} 
\providecommand{\href}[2]{#2}

\begin{acks} 
Gerold Alsmeyer acknowledges support by the Deutsche Forschungsgemeinschaft (DFG) under Germany's Excellence Strategy EXC 2044-390685587, Mathematics M\"unster: Dynamics - Geometry - Structure. 
Matthias Meiners and Jakob Stonner were supported by DFG grant ME3625/5-1. 
Part of this work was conducted during Matthias Meiners and Jakob Stonner's visit to the University of Wroc\l aw, for which they express their gratitude for the warm hospitality. 
Finally, the authors would like to thank the anonymous reviewers for their helpful remarks and for contributing a simple proof of Lemma \ref{lemma:T-comparison}. 
\end{acks}


\end{document}